\newtheorem{theorem}{Theorem}[section]
\newtheorem{definition}[theorem]{Definition}
\newtheorem{lemma}[theorem]{Lemma}
\newtheorem{corollary}[theorem]{Corollary}
\newtheorem{example}[theorem]{Example}
\newtheorem{remark}[theorem]{Remark}
\newproof{proof}{\it Proof}
\begin{document}

\begin{frontmatter}

%% Title, authors and addresses

%% use the tnoteref command within \title for footnotes;
%% use the tnotetext command for the associated footnote;
%% use the fnref command within \author or \address for footnotes;
%% use the fntext command for the associated footnote;
%% use the corref command within \author for corresponding author footnotes;
%% use the cortext command for the associated footnote;
%% use the ead command for the email address,
%% and the form \ead[url] for the home page:
%%
%% \title{Title\tnoteref{label1}}
%% \tnotetext[label1]{}
%% \author{Name\corref{cor1}\fnref{label2}}
%% \ead{email address}
%% \ead[url]{home page}
%% \fntext[label2]{}
%% \cortext[cor1]{}
%% \address{Address\fnref{label3}}
%% \fntext[label3]{}

\title{A space of generalized 
Brownian motion path-valued  continuous functions
with application}

 \author[label3]{Jae Gil Choi}%\corref{cor1}}
\ead{jgchoi@dankook.ac.kr}
\author[label3]{Seung Jun Chang}
\ead{sejchang@dankook.ac.kr}

\address[label3]{Department of Mathematics,  
                 Dankook University,
                 Cheonan 330-714, 
                 Korea}
\cortext[cor1]{Corresponding author}

%% use optional labels to link authors explicitly to addresses:
%% \author[label1,label2]{<author name>}
%% \address[label1]{<address>}
%% \address[label2]{<address>}
 
\begin{abstract}
In this paper, we introduce the paths space $\mathcal C_0^{\mathrm{gBm}}$ 
which is consists of  generalized Brownian motion path-valued 
continuous functions on $[0,T]$. We next  present several relevant examples 
of the paths space integral. We then discuss the concept of the
analytic Feynman integration theory for functionals $F$ on the   paths space   $\mathcal C_0^{\mathrm{gBm}}$.
\end{abstract}

\begin{keyword}
Generalized Brownian motion process  \sep 
Paley--Wiener--Zygmund stochastic integral  \sep 
paths space  \sep
analytic paths space Feynman integral.
%% keywords here, in the form: keyword \sep keyword

\vspace{.3cm}
\MSC[2010]   60J65 \sep 28C20 \sep  46G12 
%%28C20 \sep 46G12 \sep 42B10 \sep 60G15   
%% MSC codes here, in the form: \MSC code \sep code
%% or \MSC[2008] code \sep code (2000 is the default)

\end{keyword}

\end{frontmatter}

%%
%% \tableofcontents
%%

%%%%%%%%%%%%%%%%%%%%%%%%%%%%%%%%%%%%%%%%%%%%%%%%%%%%%%%%%%%%%%%%%%%%%%%%%%%%%%%
%%%%%%%%%%%%%%%%%%%%%%%%%%%%%%%%%%%%%%%%%%%%%%%%%%%%%%%%%%%%%%%%%%%%%%%%%%%%%%%
%%%%%%%%%%%%%%%%%%%%%%%%%%%%%%%%%%%%%%%%%%%%%%%%%%%%%%%%%%%%%%%%%%%%%%%%%%%%%%%
%%%%%%%%%%%                                               %%%%%%%%%%%%%%%%%%%%%
%%%%%%%%%%%                  sec                          %%%%%%%%%%%%%%%%%%%%%
%%%%%%%%%%%                                               %%%%%%%%%%%%%%%%%%%%%
%%%%%%%%%%%%%%%%%%%%%%%%%%%%%%%%%%%%%%%%%%%%%%%%%%%%%%%%%%%%%%%%%%%%%%%%%%%%%%%
%%%%%%%%%%%%%%%%%%%%%%%%%%%%%%%%%%%%%%%%%%%%%%%%%%%%%%%%%%%%%%%%%%%%%%%%%%%%%%%
%%%%%%%%%%%%%%%%%%%%%%%%%%%%%%%%%%%%%%%%%%%%%%%%%%%%%%%%%%%%%%%%%%%%%%%%%%%%%%%
\setcounter{equation}{0}
\section{Introduction}%%: a survey of Wiener paths space $\mathcal C_0(\mathbb B)$}\label{sec:intro}

\par
Let $(\mathbb B,\gamma)$ denote an abstract Wiener space and let 
$\mathcal C_0(\mathbb B)$ be the space of $\mathbb B$-valued continuous 
functions $\mathfrak x$  which are defined   on  $[0,T]$ 
with $\mathfrak x(0) = 0$,
see \cite{KLe73}. In \cite{Ryu92}, Ryu  improved several theories 
on $\mathcal C_0(\mathbb B)$  which are developed in classical and abstract Wiener spaces. 
Since then the concepts of the analytic Feynman integral and the analytic Fourier--Feynman 
transform, and related topics have been  developed on the  Wiener paths space 
$\mathcal C_0(\mathbb B)$,  extensively; 
references include \cite{CCY02,CKSY02,CKSY10,Cho08,Kim05,KK06}.
In \cite{Ryu92}, Ryu  suggested a  cylinder   measure  $m_{\mathbb B}$ on 
the space $\mathcal C_0(\mathbb B)$  and  constructed  the general Wiener integration 
theorem: given a multi-dimensional tuple $(t_1,t_2,\ldots,t_n)\in \mathbb R^n$ with
$0=t_0<t_1<t_2< \cdots <t_n\le T$,  and  a Borel measurable function  $f:\mathbb R^n \to \mathbb C$,
\begin{equation}\label{eq:Ryu}
\begin{aligned}
&\int_{\mathcal C_0(\mathbb B)} 
f(\mathfrak x(t_1),\mathfrak x(t_2),\ldots, \mathfrak x(t_n)) 
dm_{\mathbb B}( \mathfrak x)\\ 
&=\int_{\mathbb B^n} f\bigg(\sqrt{t_1-t_0} x_1 , 
\sqrt{t_1-t_0} x_1 +\sqrt{t_2-t_1} x_2 ,
\ldots,\sum_{j=1}^n \sqrt{t_{j}-t_{j-1}}x_j\bigg)\\
&\qquad\times
d\gamma^n(x_1,\ldots,x_n)  
\end{aligned}
\end{equation}
in the sense that if either side exists,  both sides exist and 
the equality holds. 
The concrete formulation of the  cylinder  measure  $m_{\mathbb B}$
and the applications to the theory of analytic Feynman integral, 
see \cite{CCY02,CKSY02,CKSY10,Cho08,Kim05,KK06,Ryu92}  and the references cited 
therein. 
Equation \eqref{eq:Ryu} will be evaluated in Section \ref{sec:MMo} with heuristic observations.

\par
On the other hand, in \cite{CCK15-I,CCK15-II,CCK16,CCS03,CCS10,CS03,CC12}, 
the authors defined the generalized analytic Feynman integral and the 
generalized analytic Fourier--Feynman transform  on the function space 
$C_{a,b}[0,T]$, and studied their properties with related topics. The function 
space $C_{a,b}[0,T]$, induced by a generalized Brownian motion process (GBMP),
 was introduced by Yeh in \cite{Yeh71}, 
and was used extensively in \cite{CCS07,CC96,CChungS09,CLC15,CCC13}.

\par
A GBMP on a probability space $(\Omega,\Sigma, P)$ and a time interval $[0,T]$ 
is a Gaussian  process  $Y \equiv\{Y_t\}_{t\in [0,T]}$ such that $Y_0=c$ almost
surely for some constant $c \in \mathbb  R$, and for any set of time  
moments $0= t_0 < t_1< \cdots<t_n \le T$ and any Borel set $B\subset \mathbb R^n$, 
the measure $P(I_{t_1,\ldots,t_n,B})$ of the cylinder set $I_{t_1,\ldots,t_n,B}$ of 
the form 
$I_{t_1,\ldots,t_n,B} 
=\big\{\omega\in \Omega:(Y_{t_1}(\omega),\ldots,Y_{t_n}(\omega))\in B \big\}$
is given by
\[
P(I_{t_1,\ldots,t_n,B})=\int_{B}K_n(\vec t,\vec \eta) 
d\eta_1\cdots d\eta_n 
\]
where 
\[
\begin{aligned}
K_n(\vec t,\vec \eta) 
&=\bigg( (2\pi )^n \prod\limits_{j=1}^n\big(b(t_j)-b(t_{j-1})\big) \bigg)^{-1/2} \\ 
&\quad\times 
\exp \bigg\{- \frac 12 \sum\limits_{j=1}^n
   \frac {((\eta_j-a(t_j))-(\eta_{j-1}-a(t_{j-1})))^2}
         {b(t_j)-b(t_{j-1})} \bigg\},
\end{aligned}
\]
and where $\eta_0=c$,    $a(t)$ is a continuous real-valued function on  $[0,T]$, 
and $b(t)$ is a  increasing continuous real-valued function on $[0, T]$. 
Thus, the  GBMP $Y$ is determined by the continuous functions $a(\cdot)$ 
and $b(\cdot)$. For more details, see \cite{Yeh71,Yeh73}. Note that when $c=0$, 
$a(t)\equiv 0$ and $b(t)=t$ on $[0,T]$, the GBMP reduces a standard 
Brownian motion (Wiener process).

\par
%%In this paper 
We set $c = a(0)=b(0)=0$. Then the function space $C_{a,b}[0,T]$ 
induced by the GBMP $Y$ determined by the  $a(\cdot)$ and $b(\cdot)$  
can be considered as the space of continuous sample paths of $Y$, see 
\cite{CCK15-I,CCK15-II,CCK16,CCS03,CCS07,CCS10,CC96,CChungS09,CLC15,CS03,CC12,CCC13},
and one can see that for each $t\in [0,T]$,
\[
e_t(x) \sim N\big(a(t), \,b(t)\big),
\]
where $e_t:C_{a,b}[0,T]\times[0,T]\to\mathbb R$ is the coordinate evaluation map
given by $e_t(x)=x(t)$ and $N(m, \sigma^2)$ denotes the normal distribution with 
mean $m$ and variance $\sigma^2$. We are obliged  to point out that a standard 
Brownian motion is stationary in time and is free of drift, whereas a GBMP is 
generally not stationary in time  and is subject to a drift $a(t)$.

\par
In this paper, we thus first attempt to construct 
the paths space $\mathcal C_{a,b}^{\mathrm{gBm}} \equiv \mathcal C_{a,b}^{\mathrm{gBm}}(C_{a,b}[0,T])$  
which is consists of  generalized 
Brownian motion path-valued continuous functions on $[0,T]$. 
We next  present several relevant 
examples of the paths space integral. As an application, 
we then discuss the concept of the analytic Feynman integration theory 
for functionals $F$ on the  paths space   $\mathcal C_0^{\mathrm{gBm}}$.
To do this we establish  the existence of the  
analytic paths space Feynman integral of bounded cylinder functionals $F$ of the form 
\[
F(\mathfrak x) 
=\int_{\mathbb R^{mn}} \exp\bigg\{ i \sum\limits_{j=1}^m
\sum\limits_{k=1}^n (g_j,\mathfrak x(s_k))^{\sim} v_{j,k} \bigg\}
d \nu (\vec v),\qquad \frak x \in \mathcal C_0^{\mathrm{gBm}} 
\]
where $\nu$ is a complex Borel measure on $\mathbb R^{mn}$
and $(g,\mathfrak x(s))^{\sim}$ denotes the Paley--Wiener--Zygmund (henceforth, PWZ) 
stochastic integral.
In Section \ref{sec:MMo} below, we present a more detailed survey of paths space
 and a motivation of the topic in this paper.

 %%%%%%%%%%%%%%%%%%%%%%%%%%%%%%%%%%%%%%%%%%%%%%%%%%%%%%%%%%%%%%%%%%%%%%%%%%%%%%%
%%%%%%%%%%%%%%%%%%%%%%%%%%%%%%%%%%%%%%%%%%%%%%%%%%%%%%%%%%%%%%%%%%%%%%%%%%%%%%%
%%%%%%%%%%%%%%%%%%%%%%%%%%%%%%%%%%%%%%%%%%%%%%%%%%%%%%%%%%%%%%%%%%%%%%%%%%%%%%%
%%%%%%%%%%%                                               %%%%%%%%%%%%%%%%%%%%%
%%%%%%%%%%%                  sec                          %%%%%%%%%%%%%%%%%%%%%
%%%%%%%%%%%                                               %%%%%%%%%%%%%%%%%%%%%
%%%%%%%%%%%%%%%%%%%%%%%%%%%%%%%%%%%%%%%%%%%%%%%%%%%%%%%%%%%%%%%%%%%%%%%%%%%%%%%
%%%%%%%%%%%%%%%%%%%%%%%%%%%%%%%%%%%%%%%%%%%%%%%%%%%%%%%%%%%%%%%%%%%%%%%%%%%%%%%
%%%%%%%%%%%%%%%%%%%%%%%%%%%%%%%%%%%%%%%%%%%%%%%%%%%%%%%%%%%%%%%%%%%%%%%%%%%%%%%
\setcounter{equation}{0}
\section{Preliminaries}\label{sec:path}

%%\subsection{Background}
In this section, we  present  the brief backgrounds  
which are needed in the following sections.

\par
Let  $a(t)$ be an absolutely continuous real-valued  function on $[0,T]$ with $a(0)=0$ 
and  $a'(t)\in L^2[0,T]$,  and let $b(t)$ be a strictly increasing, continuously 
differentiable real-valued function with $b(0)=0$ and $b'(t) >0$ for each $t\in[0,T]$. 
The  GBMP   $Y$ determined by $a(t)$ and $b(t)$ is a Gaussian process  with mean function  
$a(t)$ and covariance function $r(s,t)=\min\{b(s),b(t)\}$.  For more details, 
see \cite{CCS03,CChungS09,CS03,Yeh71,Yeh73}. Applying  \cite[Theorem 14.2]{Yeh73},
one can construct a probability measure space $(C_{a,b}[0,T],\mathcal{B}(C_{a,b}[0,T]),\mu)$
where  $C_{a,b}[0,T]$ is the space of continuous sample paths of (a separable version of) 
the GBMP $Y$ (it is equivalent to the Banach space of continuous functions $x$ on $[0,T]$ 
with $x(0)=0$ under the sup norm) and  $\mathcal B(C_{a,b}[0,T])$ is the Borel $\sigma$-field 
of $C_{a,b}[0,T]$ induced by the sup norm. We then complete this function space to obtain  
the complete probability  measure space $(C_{a,b}[0,T],\mathcal W(C_{a,b}[0,T]),\mu)$
where $\mathcal W(C_{a,b}[0,T])$ is the set of all Wiener  
measurable subsets of $C_{a,b}[0,T]$.

\begin{remark} 
The coordinate process $e: C_{a,b}[0,T]\times[0,T]\to\mathbb R$ defined by  
$e(x,t)\equiv e_t(x)=x(t)$  is also the GBMP determined by $a(t)$ and $b(t)$.  
\end{remark}

\begin{remark} 
Let  $C_{a,b}^n[0,T]$ be the product of $n$ copies  of  $C_{a,b}[0,T]$. Since  the space  
$C_{a,b}[0,T]$ endowed with the uniform topology  is  separable, the Borel $\sigma$-field 
$\mathcal B(C_{a,b}^n[0,T])$ on $C_{a,b}^n[0,T]$ coincides with the product $\sigma$-field 
$\otimes^n \mathcal B(C_{a,b}[0,T])$, the product of $n$ copies of $\mathcal B(C_{a,b}[0,T])$.
From this fact we see that
\[
\mathcal W (C_{a,b}^n [0,T])= \overline{\mathcal B (C_{a,b}^n [0,T])}
=\overline{\otimes^n\mathcal W (C_{a,b} [0,T])}
\]
where $\mathcal W (C_{a,b}^n [0,T])$ denotes the $\sigma$-field consisting of 
all Wiener measurable subsets of the product function space $C_{a,b}^n[0,T]$,
$\otimes^n\mathcal W (C_{a,b} [0,T])$ is the product of $n$ copies  of  the 
$\sigma$-field $\mathcal W(C_{a,b}[0,T])$ on $C_{a,b}[0,T]$, and $\overline{\mathcal S}$ 
denotes the complete $\sigma$-field generated by a $\sigma$-field $\mathcal S$.
\end{remark}

\par
Let $L_{a,b}^2[0,T]$ be the space of functions on $[0,T]$ which are Lebesgue 
measurable and square integrable with respect to the Lebesgue--Stieltjes measures 
on $[0,T]$ induced by  $a(\cdot)$ and $b(\cdot)$; i.e.,
\[
L_{a,b}^2[0,T]
=\bigg\{   v :  \int_{0}^{T} v^2 (s) db(s)  <+\infty  \hbox{ and }
                \int_0^T v^2 (s) d |a|(s) < +\infty \bigg\}
\]
where $|a|(\cdot)$ denotes the   total variation   function of $a(\cdot)$.
Then $L_{a,b}^2[0,T]$ is a separable Hilbert space with inner product defined by
\[
(u,v)_{a,b}=\int_0^T u(t)v(t)dm_{|a|,b}(t)\equiv \int_0^T u(t)v(t)d[b(t)+|a|(t)],
\]
where $m_{|a|,b}$ denotes  the Lebesgue--Stieltjes measure induced by $|a|(\cdot)$
and $b(\cdot)$. In particular, note that $\| u\|_{a,b}\equiv\sqrt{(u,u)_{a,b}} =0$
if and only if $u(t)=0$ a.e. on $[0,T]$.
Furthermore, $(L_{a,b}^2[0,T],\|\cdot\|_{a,b})$  is a separable Hilbert space.

\par
Next, let
\[
C_{a,b}'[0,T]
 =\bigg\{ w \in C_{a,b}[0,T] : w(t)=\int_0^t z(s) d b(s)
\hbox{  for some   } z \in L_{a,b}^2[0,T]  \bigg\}.
\]
For $w\in C_{a,b}'[0,T]$, with $w(t)=\int_0^t z(s) d b(s)$ for $t\in [0,T]$,
let $D: C_{a,b}'[0,T] \to L_{a,b}^2[0,T]$ be defined by the formula
\begin{equation}\label{eq:Dt}
Dw(t)= z(t)=\frac{w'(t)}{b'(t)}.
\end{equation}
Then $C_{a,b}' \equiv C_{a,b}'[0,T]$ with inner product
\[
(w_1, w_2)_{C_{a,b}'}
=\int_0^T  Dw_1(t)  Dw_2(t)  d b(t)
\]
is also a separable  Hilbert space.

\begin{remark}
Note that  the two separable Hilbert spaces $L_{a,b}^2[0,T]$ and $C_{a,b}'[0,T]$
are (topologically) homeomorphic under the linear operator $D$ given by equation 
\eqref{eq:Dt}. The inverse operator of $D$ is given by 
$(D^{-1}z)(t)=\int_0^t z(s) d b(s)$ for $t\in [0,T]$.
But the linear operator $D$ is not isometric.
\end{remark}

\par
In this paper, in addition to the conditions put on $a(t)$ above,
we now add the condition
\begin{equation}\label{eq:new-cc2}
\int_0^T |a'(t)|^2 d|a|(t)< +\infty.
\end{equation}
Then, the function $a: [0,T]\to\mathbb R$ satisfies the condition \eqref{eq:new-cc2} 
if and only if $a(\cdot)$ is an element of $C_{a,b}'[0,T]$. Under the  condition 
\eqref{eq:new-cc2}, we observe that for each  $w\in C_{a,b}'[0,T]$ with $Dw=z$,
\[
(w,a)_{C_{a,b}'}=\int_0^T Dw(t) Da(t) db(t)
=\int_0^T z(t)\frac{a'(t)}{b'(t)}db(t)=\int_0^T z(t)da(t).
\]

\par
For each $w\in C_{a,b}'[0,T]$  and $x\in C_{a,b}[0,T]$,  we let $(w,x)^{\sim}$ denote 
the  PWZ stochastic integral \cite{CCK15-I,CCC13}. It is known that  for each 
$w\in C_{a,b}'[0,T]$, the PWZ stochastic integral  $(w,x)^{\sim}$  exists for s-a.e. 
$x\in C_{a.b}[0,T]$ and it is a Gaussian random variable with mean $(w,a)_{C_{a,b}'}$  
and variance $\|w\|_{C_{a,b}'}^2$. It also  follows  that for $w, x\in C_{a,b}'[0,T]$, 
\begin{equation}\label{eq:pwz-meam-type}
(w,x)^{\sim}=(w,x)_{C_{a,b}'} 
\end{equation}
and that  for  $w_1,w_2\in C_{a,b}'[0,T]$,
\begin{equation}\label{eq:pwa-cov-semi}
\int_{C_{a,b}[0,T]}(w_1,x)^{\sim}(w_2,x)^{\sim}d\mu(x)
=(w_1,w_2)_{C_{a,b}'}+(w_1,a)_{C_{a,b}'}(w_2,a)_{C_{a,b}'}.
\end{equation}
Thus the random variable $(w,x)^{\sim}$ is normally distributed with
\[
(w,x)^{\sim} \sim N\Big((w,a)_{C_{a,b}'},\|w\|_{C_{a,b}'}^2\Big).
\]
Furthermore, if $Dw=z\in L_{a,b}^2[0,T]$ is of bounded variation on $[0,T]$,  the PWZ 
stochastic integral $(w,x)^{\sim}$ equals the Riemann--Stieltjes integral $\int_0^T z(t)dx(t)$.

\par
For each $t\in[0,T]$, let
%-----Eq
\begin{equation}\label{rkernel}
\beta_t(s)=\int_0^s \chi_{[0,t]}(\tau) d b(\tau)
=\begin{cases}
    b(s), \quad & 0\le s \le t\\
    b(t), \quad & t\le s \le T
\end{cases}.
\end{equation}
%-----
Then the family of functions $\{\beta_t: 0\le t\le T\}$
from $C_{a,b}'[0,T]$ has the reproducing property
%-----Eq
\[
(w,\beta_t)_{C_{a,b}'}=w(t)
\]
%-----
for all $w\in C_{a,b}'[0,T]$.
Note that for any $s,t\in[1,2]$, $\beta_t(s)=\min\{b(s), b(t)\}$,
the covariance function associated with the 
generalized Brownian motion $Y$ used in this paper.
We also note that for each $x\in C_{a,b}[0,T]$,
%-----Eq
\begin{equation}\label{kernel-repre}
x(t)=\int_0^T \chi_{[0,t]}(\tau)dx(\tau)=(\beta_t,x)^{\sim}.
\end{equation}
%-----

\par
Using the change of variable theorem, it follows the function space integration 
formula:
\begin{equation}\label{eq:int-formula-cab}
\begin{aligned}
&\int_{C_{a,b}[0,T]}\exp\{\rho (w,x)^{\sim}\}d\mu(x) 
=\exp\bigg\{\frac{\rho^2}{2}\|w\|_{C_{a,b}'}^2 
+\rho(w ,a)_{C_{a,b}'}\bigg\}
\end{aligned}
\end{equation}
for every  $\rho>0$.

%%%%%%%%%%%%%%%%%%%%%%%%%%%%%%%%%%%%%%%%%%%%%%%%%%%%%%%%%%%%%%%%%%%%%%%%%%%%%%%
%%%%%%%%%%%%%%%%%%%%%%%%%%%%%%%%%%%%%%%%%%%%%%%%%%%%%%%%%%%%%%%%%%%%%%%%%%%%%%%
%%%%%%%%%%%%%%%%%%%%%%%%%%%%%%%%%%%%%%%%%%%%%%%%%%%%%%%%%%%%%%%%%%%%%%%%%%%%%%%
%%%%%%%%%%%                                               %%%%%%%%%%%%%%%%%%%%%
%%%%%%%%%%%                  sec                          %%%%%%%%%%%%%%%%%%%%%
%%%%%%%%%%%                                               %%%%%%%%%%%%%%%%%%%%%
%%%%%%%%%%%%%%%%%%%%%%%%%%%%%%%%%%%%%%%%%%%%%%%%%%%%%%%%%%%%%%%%%%%%%%%%%%%%%%%
%%%%%%%%%%%%%%%%%%%%%%%%%%%%%%%%%%%%%%%%%%%%%%%%%%%%%%%%%%%%%%%%%%%%%%%%%%%%%%%
%%%%%%%%%%%%%%%%%%%%%%%%%%%%%%%%%%%%%%%%%%%%%%%%%%%%%%%%%%%%%%%%%%%%%%%%%%%%%%%
\setcounter{equation}{0}
\section{Motivations}\label{sec:MMo}
\subsection{Survey on the classical Wiener space $C_0[0,T]$}\label{sec:motivation}
%%: a survey of Wiener paths space $\mathcal C_0(\mathbb B)$}\label{sec:intro}
  
Given a positive real $T>0$, let  $C_0[0,T]$ denote one-parameter Wiener space, 
that is, the space of  all  real-valued continuous functions $x$ on the  
interval $[0,T]$   with $x(0)=0$. Let $\mathcal{M}$ denote the class of  all 
Wiener measurable subsets of $C_0[0,T]$ and  let $m_w$ denote Wiener measure.
Then, as is well-known, $(C_0[0,T],\mathcal{M},m_w)$ is a complete probability measure space.
The coordinate process  $\widetilde W \equiv \{\widetilde W_t\}_{t\in [0,T]}$
given  by $\widetilde W_t(x) = x(t)$ on $C_0[0,T]\times[0,T]$
is a standard Brownian motion (henceforth, SBM). Thus Wiener measure  $m_w$ is a Gaussian measure on $C_0[0,T]$ 
with mean zero and covariance function $r(s,t)=\min\{s,t\}$  in view of following
illustration.

\par
The SBM (equivalently, Wiener  process) on a probability 
space $(\Omega,\Sigma, \mathbf{p})$ and a time interval $[0,T]$ is a Gaussian  process  
$W \equiv\{W_t\}_{t\in [0,T]}$ such that $W_0=0$ almost surely, and for any set 
of time  moments $0= t_0 < t_1< \cdots<t_n \le T$ and any Borel set $B\subset \mathbb R^n$, 
the measure $\mathbf{p}(I_{t_1,\ldots,t_n,B})$ of the cylinder set $I_{t_1,\ldots,t_n,B}$ of 
the form
\[ 
I_{t_1,\ldots,t_n,B} 
=\big\{\omega\in \Omega:(W_{t_1}(\omega),\ldots,W_{t_n}(\omega))\in B \big\}
\]
is given by
\begin{equation}\label{eq:basic001}
\bigg( (2\pi )^n \prod\limits_{j=1}^n(t_j-t_{j-1}) \bigg)^{-1/2}   
\int_{B}\exp \bigg\{-\frac12\sum\limits_{j=1}^n\frac{[u_j-u_{j-1}]^2}{t_j-t_{j-1}} \bigg\}
du_1\cdots du_n
\end{equation}
where $u_0=0$.
The coordinate process $\widetilde W: C_{0}[0,T]\times[0,T]\to\mathbb R$ defined by  
$\widetilde W(x,t)\equiv \widetilde W_t(x)=x(t)$  is also a   SBM. 
Thus the Wiener space $C_0[0,T]$ can be considered as the space of all sample paths 
of a Brownian motion. We observe  that for any $t_1,t_2\in[0,T]$ with 
$t_1<t_2$,
\[
\widetilde W(x,t_2)-\widetilde W(x,t_1)\sim N(0, t_2-t_1),
\]
where $N(m, \sigma^2)$ denotes the normal distribution with 
mean $m$ and variance $\sigma^2$.
 Given the time  moments $0= t_0 < t_1< \cdots<t_n \le T$,
define a function 
$P_{(t_1,\ldots,t_n)}: C_{0}[0,T]\to \mathbb R^n$ by 
$P_{(t_1,\ldots,t_n)}(x)=(x(t_1),\ldots,x(t_n))$.
Then the Wiener measure $m_w(I_{t_1,\ldots,t_n,B})$ of the cylinder set 
$I_{t_1,\ldots,t_n,B} =\{x\in C_0[0,T]:P_{(t_1,\ldots,t_n)}(x)\in B \}$
with a Borel set $B$ in $\mathbb R^n$ 
is given by  \eqref{eq:basic001}. Furthermore,   the probability distribution 
$m_w\circ P_{(t_1,\ldots,t_n)}^{-1}$ and the Lebesgue measure $m_L^n$ on $\mathbb R^n$ 
are mutually absolutely continuous. Thus the Radon--Nikodym derivative
of $m_w\circ P_{(t_1,\ldots,t_n)}^{-1}$ with respect to $m_L^n$ is given by
\begin{equation}\label{eq:basic002}
\begin{aligned}
&\frac{m_w\circ P_{(t_1,\ldots,t_n)}^{-1}}{m_L^n}(u_1,\ldots,u_n)\\
&=\bigg( (2\pi )^n \prod\limits_{j=1}^n(t_j-t_{j-1}) \bigg)^{-1/2}   
 \exp \bigg\{-\frac12\sum\limits_{j=1}^n\frac{[u_j-u_{j-1}]^2}{t_j-t_{j-1}} \bigg\} 
\end{aligned}
\end{equation}
with $u_0=0$. In fact, for any subset $E$ of $\mathbb R^n$,
$E$ is Lebesgue measurable if and only if $P_{(t_1,\ldots,t_n)}^{-1}(E)$
is Wiener measurable. 
For more details, see \cite{CR88,JS79} and  references cited therein.

\par
For each $v\in L_2[0,T]$  and $x\in C_{0}[0,T]$,  we let $\langle{v,x}\rangle$ denote 
the Paley--Wiener--Zygmund (PWZ) stochastic integral \cite{JS81,PWZ33,PS88}. It is known 
that  for each $v\in L_2[0,T]$, the PWZ stochastic integral  $\langle{v,x}\rangle$  exists 
for $m_w$-a.s. $x\in C_{0}[0,T]$ and it is a Gaussian random variable with mean $0$  and 
variance $\|v\|_{2}^2$. It is also  known that for  $v_1,v_2\in L_2[0,T]$, 
\begin{equation}\label{eq:pwa-cov-semi-L2}
\int_{C_{0}[0,T]}\langle{v_1,x}\rangle\langle{v_2,x}\rangle d m_w(x)
=(v_1,v_2)_{2}
\end{equation}
where $(\cdot,\cdot)_2$ denotes the $L_2$-inner product. Furthermore, if $v\in L_{2}[0,T]$ 
is of bounded variation on $[0,T]$, then the PWZ stochastic integral $\langle{v,x}\rangle$ 
equals the Riemann--Stieltjes integral $\int_0^T v(t)dx(t)$.

\par
Let 
\[
C_{0}'[0,T]=\bigg\{ w \in C_{0}[0,T] : w(t)=\int_0^tv(\tau)d\tau
\hbox{  for some   } v \in L_2[0,T]  \bigg\}.
\]
Then the  Cameron--Martin space  $C_{0}' \equiv C_{0}'[0,T]$ is a real separable infinite dimensional 
Hilbert space with inner product
\[
(w_1, w_2)_{C_{0}'}
=\int_0^T Dw_1(\tau)Dw_2(\tau)d\tau 
\]
where $Dw(\tau)= \frac{dw}{d\tau}(\tau)$.
 Given any $w\in C_0'[0,T]$, we use the notation
$(w,x)^{\sim}$ to denote the   PWZ stochastic integral $\langle{Dw,x}\rangle$. 
Then  for $w, x\in C_{0}'[0,T]$, 
$(w,x)^{\sim}=(w,x)_{C_{0}'} $
and  equation \eqref{eq:pwa-cov-semi-L2} above can be rewritten as follows: 
for  $w_1,w_2\in C_{0}'[0,T]$,
\begin{equation}\label{eq:pwa-cov-semi}
\int_{C_{0}[0,T]}(w_1,x)^{\sim}(w_2,x)^{\sim}d m_w(x) 
=(w_1,w_2)_{C_{0}'}.
\end{equation}

\par
For each $t\in[0,T]$, let
%-----Eq
\begin{equation}\label{rkernel}
\beta_t(s)=\int_0^s \chi_{[0,t]}(\tau) d\tau
=\begin{cases}
    s, \quad & 0\le s \le t\\
    t, \quad & t< s \le T
\end{cases}.
\end{equation}
%-----
Then the family of functions $\{\beta_t: 0\le t\le T\}$
from $C_{0}'[0,T]$ has the reproducing property
%-----Eq
\[
(w,\beta_t)_{C_{0}'}=w(t)
\]
%-----
for all $w\in C_{0}'[0,T]$. Note that $\beta_t(s)=\min\{s,t\}$,
the covariance function of the  Brownian motion $\widetilde W$ discussed above.
We also note that for each $(x,t)\in C_{0}[0,T]\times [0,T]$,
%-----Eq
\begin{equation}\label{rkernel-path}
\widetilde W(x,t)=x(t)=\int_0^T \chi_{[0,t]}(\tau)dx(\tau)=(\beta_t,x)^{\sim}.
\end{equation}
 %-----

\par
We will discuss the Wiener integral of three kinds of tame functions on $C_{0}[0,T]$.
Given an $n$-tuple $(t_1,\ldots,t_n)$ of  time  moments with $0= t_0 < t_1< \cdots<t_n \le T$, 
let $F: C_{0}[0,T]\to \mathbb C$ be a tame function  given by
\[
F(x)=f(x(t_1), x(t_2),\ldots, x(t_n))
\]
where $f:\mathbb R^n\to \mathbb C$ is a Lebesgue measurable function.
Then applying equation  \eqref{eq:basic002},  it follows that
\begin{equation}\label{wint-1st}
\begin{aligned}
&\int_{C_0[0,T]}F (x) d m_w(x) \\
& =\int_{C_0[0,T]}f(x(t_1),x(t_2), \ldots, x(t_n))d m_w(x)\\
&=\bigg( (2\pi )^n \prod\limits_{j=1}^n(t_j-t_{j-1}) \bigg)^{-1/2}\\
&\quad \times   
\int_{\mathbb R^n}f(u_1,u_2,\ldots,u_n)
\exp \bigg\{- \frac 12 \sum\limits_{j=1}^n
\frac{[u_j-u_{j-1}]^2}{t_j-t_{j-1}} \bigg\}
d\vec u
\end{aligned} 
\end{equation}
where $u_0=0$. 
For each $j\in\{1,2,\ldots,n\}$, let $\alpha_j(\tau)=\chi_{[0,t_j]}(\tau)$.
Then $\{\alpha_1,\alpha_2,\ldots, \alpha_n\}$ is a linearly independent set of functions 
in $L_2[0,T]$, and $\langle{\alpha_j,x}\rangle=x(t_j)$ for each $j\in\{1,2,\ldots,n\}$.

\par
Next we consider the second kind of  tame function  $F$ on $C_0[0,T]$
given by 
\[
F(x)=f(x(t_1), x(t_2)-x(t_1), \ldots, x(t_n)-x(t_{n-1})).
\]
For each $j\in \{1,2,\ldots,n\}$, let 
\[
X_j(x)=x(t_j)-x(t_{j-1}).
\]
Then $X_j$'s form a set of  independent Gaussian random variables 
such that $X_j \sim N (0, t_j-t_{j-1})$ for each  $j\in \{1,2,\ldots,n\}$. 
 Thus,  by the change of variables theorem, it follows that
\begin{equation}\label{wint-2nd}
\begin{aligned}
&\int_{C_0[0,T]} F (x) d m_w(x)\\
&=\int_{C_0[0,T]}f(x(t_1),x(t_2)-x(t_1),  \ldots, x(t_n)-x(t_{n-1}))d m_w(x)\\
&=\bigg( (2\pi )^n \prod\limits_{j=1}^n(t_j-t_{j-1}) \bigg)^{-1/2} 
 \int_{\mathbb R^n}f(v_1,\ldots, v_n )
\exp \bigg\{- \frac 12 \sum\limits_{j=1}^n
   \frac {v_j^2}{t_j-t_{j-1}} \bigg\}
d\vec v.
\end{aligned} 
\end{equation}
For each $j\in\{1,2,\ldots,n\}$, let $\delta_j(\tau)=\chi_{[t_{j-1},t_j]}(\tau)$.
Then $\{\delta_1,\delta_2,\ldots, \delta_n\}$ is an orthogonal  set of functions 
in $L_2[0,T]$, and $\langle{\delta_j,x}\rangle=X_j(x)$ for each $j\in\{1,2,\ldots,n\}$.

\par
Finally the third kind of tame function $F$ we consider  is given by 
\[
F(x)=f\bigg(\frac{x(t_1)}{\sqrt{t_1}}, \frac{x(t_2)-x(t_1)}{\sqrt{t_2-t_{1}}}, 
\ldots, \frac{x(t_n)-x(t_{n-1})}{\sqrt{t_n-t_{n-1}}}\bigg).
\]
For each $j\in \{1,2,\ldots,n\}$, let 
\[
Y_j(x)=\frac{x(t_j)-x(t_{j-1})}{\sqrt{t_j-t_{j-1}}}.
\]
Then $Y_j$'s form a set of  i.i.d. Gaussian random variables. We note that 
for each  $j\in \{1,2,\ldots,n\}$, $Y_j \sim N (0,1)$.
Thus,  by the change of variables theorem, it follows that
\[
\begin{aligned}
&\int_{C_0[0,T]} F (x) d m_w(x)\\
&=\int_{C_0[0,T]}f\bigg(\frac{x(t_1)}{\sqrt{t_1}}, \frac{x(t_2)-x(t_1)}{\sqrt{t_2-t_{1 }}}, 
\ldots, \frac{x(t_n)-x(t_{n-1})}{\sqrt{t_n-t_{n-1}}}\bigg)d m_w(x)\\
&=\int_{\mathbb R^n}f(w_1,\ldots,w_n )d \gamma_{G}(\vec w),
\end{aligned} 
\]
where $\gamma_G$ is the standard Gaussian measure on $\mathbb R^n$ given by
\begin{equation}\label{sgm}
d \gamma_{G}(\vec w)
=(2\pi )^{-n/2}\exp \bigg\{- \frac 12 \sum\limits_{j=1}^n{w_j^2} \bigg\}dw_1\cdots dw_n.
\end{equation}
For each $j\in\{1,2,\ldots,n\}$, 
let $\varepsilon_j(\tau)=(t_j-t_{j-1})^{-1/2}\chi_{[t_{j-1},t_j]}(\tau)$.
Then $\{\varepsilon_1,\varepsilon_2,\ldots,\varepsilon_n\}$ is an orthonormal  set of functions 
in $L_2[0,T]$, and $\langle{\varepsilon_j,x}\rangle=Y_j(x)$ for each $j\in\{1,2,\ldots,n\}$.

\par
In the last expression of \eqref{wint-1st}, we consider the 
following transformation $S:\mathbb R^n \to\mathbb R^n$ given by 
\begin{equation}\label{Tnew-pre}
(v_1,v_2,\ldots,v_n)
\,\,\,\stackrel{S}{\longmapsto}\,\,\,
 (u_1,u_2,\ldots,u_n)=(v_1,v_1+v_2, \ldots,v_1+v_2 +\cdots+v_n).
\end{equation}
Then it follows that
\[
v_j =  u_j-u_{j-1}
\]
for all $j\in\{1,\ldots,n\}$ and
\[
\mathcal J \bigg(\frac{u_1,\ldots, u_n}{v_1,\ldots, v_n}\bigg)=1  
\]
where $\mathcal J$ denotes the Jacobi symbol.
In these setting, equation \eqref{wint-1st} can be rewritten by
\begin{equation}\label{eq:Ryu-other}
\begin{aligned}
&\int_{C_0[0,T]}f(x(t_1),x(t_2), \ldots, x(t_n))d m_w(x)\\
&=\bigg(\prod_{j=1}^n2\pi(t_j-t_{j-1})\bigg)^{-1/2}\\
&\quad \times
\int_{\mathbb R^n}  
f(v_1,v_1+v_2, \ldots, v_1+v_2+ \cdots +v_n)
\exp\bigg\{-\frac{1}{2}\sum_{j=1}^n \frac{v_j^2}{t_j-t_{j-1}}\bigg\}d\vec v.
\end{aligned}
\end{equation}

\par
Next,  in the last expression of \eqref{wint-1st}, 
we consider the following transformation
$T:\mathbb R^n \to\mathbb R^n$ given by 
\begin{equation}\label{Told-pre}
\begin{aligned}
&(w_1,w_2,\ldots,w_n) 
\,\,\,\stackrel{T}{\longmapsto}\,\,\,\\
&(u_1,u_2,\ldots,u_n) \\
& = \bigg( \sqrt{t_{1}-t_{0}} w_1, 
\sqrt{t_{1}-t_{0}} w_1+\sqrt{t_{2}-t_{1}} w_2,
\ldots, \sum_{j=1}^n\sqrt{t_{j}-t_{j-1}} w_j\bigg). 
\end{aligned}
\end{equation}
Then it follows that
\[
w_j = \frac{u_j-u_{j-1}}{\sqrt{t_j-t_{j-1}}}
\]
for each $j\in\{1,\ldots,n\}$ and
\[
\mathcal J\bigg(\frac{u_1,\ldots, u_n}{w_1,\ldots, w_n}\bigg) 
 =\prod_{j=1}^n \sqrt{t_j-t_{j-1}}.
\]
In these setting,  equation \eqref{wint-1st} can also be rewritten by
\begin{equation}\label{eq:Ryu-pre}
\begin{aligned}
&\int_{C_0[0,T]} f(x(t_1),\ldots, x(t_n)) d m_w(x)\\ 
&=\int_{\mathbb R^n}  f\bigg(\sqrt{t_1-t_0} w_1, 
\sqrt{t_1-t_0} w_1 +\sqrt{t_2-t_1} w_2,\ldots,\\
&\qquad\qquad\qquad\qquad\qquad\qquad\qquad\qquad
\sum_{j=1}^n \sqrt{t_{j}-t_{j-1}} w_j\bigg)
d \gamma_{G}(\vec w),
\end{aligned}
\end{equation}
 where $\gamma_G$ is the standard Gaussian measure given by \eqref{sgm}.

\begin{remark}
The classical Wiener space $C_0[0,T]$ with supremum norm 
can be considered as a (closed) subspace of the function space (topological product space)
\[
\mathbb R^{[0,T]}=\{x: x \mbox{ is a } \mathbb R\mbox{-valued function on } [0,T]\}.
\]
Likewise, the Wiener paths space $\mathcal C_0(\mathbb B)$ also can be considered as  a subspace 
of the function space
\[
\mathbb B^{[0,T]}=\{\mathfrak x: \mathfrak x \mbox{ is a } \mathbb B\mbox{-valued function on } [0,T]\}.
\]
Thus, we can see that the general Wiener integration theorem, given by  \eqref{eq:Ryu},
for measurable functionals $f$ on the Wiener paths space $\mathcal C_0(\mathbb B)$
is a natural  extension of \eqref{eq:Ryu-pre}.
\end{remark}

%%%%%%%%%%%%%%%%%%%%%%%%%%%%%%%%%%%%%%%%%%%%%%%%%%%%%%%%%%%%%%%%%%%%%%%%%%%%%
%%%%%%%%%%%%%%%%%%%%%%%%%%%%%%%%%%%%%%%%%%%%%%%%%%%%%%%%%%%%%%%%%%%%%%%%%%%%%
%%%%%%%%%%%%%%%%%%%%%%%%%%%%%%%%%%%%%%%%%%%%%%%%%%%%%%%%%%%%%%%%%%%%%%%%%%%%%
%%%%%%%%%%%%%%%%%%%%%%%%%%%%%%%%%%%%%%%%%%%%%%%%%%%%%%%%%%%%%%%%%%%%%%%%%%%%%

 \subsection{Change of variables theorem on the function space $C_{a,b}[0,T]$}
We shall discuss  a change of variables theorem, such as \eqref{eq:Ryu-pre},  on the function space $C_{a,b}[0,T]$. 

Given an $n$-tuple $(t_1,\ldots,t_n)$ of  time  moments with $0= t_0 < t_1< \cdots<t_n \le T$, 
let $F: C_{a,b}[0,T]\to \mathbb C$ be a tame function  given by
\[
F(x)=f(x(t_1), x(t_2),\ldots, x(t_n))
\]
where $f:\mathbb R^n\to \mathbb C$ is a Lebesgue measurable function.
We consider the following transformation
$T:\mathbb R^n \to\mathbb R^n$ given by 
\begin{equation}\label{eq:Tt-survey}
 T_{\vec{t}}(\vec w)
\equiv T_{\vec{t}}(w_1,\ldots, w_n) 
=\big(L_{\vec{t},1}(\vec w),L_{\vec{t},2}(\vec w), \ldots, L_{\vec{t},n} (\vec{w})\big)
\end{equation}
where
\begin{equation}\label{eq:Ls-survey}
L_{\vec{t},k}(\vec{ w})
= \sum\limits_{l=1}^k \sqrt{b(s_l)-b(s_{l-1})} (w_l -a(t_l)) +a(t_k)
\end{equation}
for each $k=1,\ldots,n$. 
Let $\vec u = T_{\vec{t}}(\vec w)$.
Then it follows that for each $j\in\{1,2,\ldots,n\}$,
 \[
\sqrt{b(t_j)-b(t_{j-1})}(w_j- a(t_j)) =(u_j-a(t_j))-( u_{j-1}-a(t_{j-1}))
\] 
or, equivalently,
\begin{equation}\label{eq:basic001-in-obervation}
(w_j- a(t_{j-1})) =\frac{(u_j-a(t_j))-  (u_{j-1}-a(t_{j-1}))}{\sqrt{b(t_{j-1})-b(t_{j-1})}}.
\end{equation} 
In this case, we see that
\begin{equation}\label{eq:jj-survey}
\mathcal J\bigg(\frac{u_1,\ldots, u_n}{w_1,\ldots, w_n}\bigg) 
 =\prod_{j=1}^n \sqrt{b(t_j)-b(t_{j-1})}.
\end{equation} 
Using \eqref{eq:basic001}, \eqref{eq:basic001-in-obervation}
and \eqref{eq:jj-survey}, it follows that
\[
\begin{aligned}
&\int_{C_{a,b}[0,T]} f(x(t_1),\ldots, x(t_n)) d\mu(x)\\ 
&=\int_{\mathbb R^n}  f(u_1,\ldots, u_n)
K_{n}(\vec t ,\vec u)d \vec u\\
&=\int_{\mathbb R^n}  f(u_1,\ldots, u_n)
\bigg(\prod_{j=1}^n2\pi(b(t_j)-b(t_{j-1}))\bigg)^{-1/2}\\
&\qquad\qquad \times
\exp\bigg\{-\frac{1}{2}\sum_{j=1}^n \frac{[(u_j-a(t_j))-(u_{j-1}-a(t_{j-1}))]^2}{b(t_j)-b(t_{j-1})}\bigg\}d \vec u\\
&=\int_{\mathbb R^n}  f (T_{\vec t}(w_1,\ldots, w_n) ) \exp\bigg\{-\frac{1}{2}\sum_{j=1}^n(w_j- a(t_{j-1}))^2\bigg\} \\
&\qquad\qquad\times 
\bigg(\prod_{j=1}^n 2\pi(b(t_j)-b(t_{j-1}))\bigg)^{-1/2}  
\bigg|\mathcal J\bigg(\frac{u_1,\ldots, u_n}{w_1,\ldots, w_n}\bigg) \bigg|d \vec w\\
%%&=\int_{\mathbb R^n}   f (T_{\vec t}(w_1,\ldots, w_n) ) \\
%%&\qquad\times 
%%\bigg((2\pi)^n \prod_{j=1}^n (b(t_j)-b(t_{j-1})\bigg)^{-1/2}\exp\bigg\{-\frac{1}{2}\sum_{j=1}^n(w_j- a(t_{j-1}))^2\bigg\}\\
%%&\qquad \times
%%\bigg(\prod_{j=1}^n \sqrt{b(t_j)-b(t_{j-1})}\bigg)d \vec x\\
&= (2\pi)^{-n/2} \int_{\mathbb R^n}   f (T_{\vec t}(w_1,\ldots, w_n) )  
 \exp\bigg\{-\frac{1}{2}\sum_{j=1}^n(w_j- a(t_{j-1}))^2\bigg\} d \vec w\\
\end{aligned}
\]
\[
\begin{aligned}
&= \int_{\mathbb R^n}   f (T_{\vec t}(w_1,\ldots, w_n) )    d \gamma_{G}^{a;\vec t}(\vec w), 
\end{aligned}
\]
 where $\gamma_G^{a;\vec t}$ is the Gaussian measure on $\mathbb R^n$ (with mean vector $(a(t_1),a(t_2),$$\cdots,$ $a(t_n))$) 
 given by
\[
 \gamma_{G}^{a;\vec t}(B)=(2\pi)^{-n/2} \int_{B} \exp\bigg\{-\frac{1}{2}\sum_{j=1}^n(w_j- a(t_{j-1}))^2\bigg\} d\vec u 
\]
for $B\in \mathbb R^n$.

In view of these observations  we will study  a  construction of the paths space $\mathcal C_0^{\mathrm{gBm}}(C_{a,b}[0,T])$,
such as the Wiener paths space $\mathcal C_0(\mathbb B)$.
The general  paths space $\mathcal C_0^{\mathrm{gBm}}(C_{a,b}[0,T])$ is associated with the GBMP 
determined by the continuous functions $a(\cdot)$ and $b(\cdot)$ on $[0,T]$.

%%%%%%%%%%%%%%%%%%%%%%%%%%%%%%%%%%%%%%%%%%%%%%%%%%%%%%%%%%%%%%%%%%%%%%%%%%%%%%%
%%%%%%%%%%%%%%%%%%%%%%%%%%%%%%%%%%%%%%%%%%%%%%%%%%%%%%%%%%%%%%%%%%%%%%%%%%%%%%%
%%%%%%%%%%%%%%%%%%%%%%%%%%%%%%%%%%%%%%%%%%%%%%%%%%%%%%%%%%%%%%%%%%%%%%%%%%%%%%%
%%%%%%%%%%%                                               %%%%%%%%%%%%%%%%%%%%%
%%%%%%%%%%%                  sec                          %%%%%%%%%%%%%%%%%%%%%
%%%%%%%%%%%                                               %%%%%%%%%%%%%%%%%%%%%
%%%%%%%%%%%%%%%%%%%%%%%%%%%%%%%%%%%%%%%%%%%%%%%%%%%%%%%%%%%%%%%%%%%%%%%%%%%%%%%
%%%%%%%%%%%%%%%%%%%%%%%%%%%%%%%%%%%%%%%%%%%%%%%%%%%%%%%%%%%%%%%%%%%%%%%%%%%%%%%
%%%%%%%%%%%%%%%%%%%%%%%%%%%%%%%%%%%%%%%%%%%%%%%%%%%%%%%%%%%%%%%%%%%%%%%%%%%%%%%
\setcounter{equation}{0}
\section{The paths space $\mathcal C_0$}

\par
Let $\mathcal C_0^{\mathrm{gBm}} \equiv \mathcal C_0^{\mathrm{gBm}}(C_{a,b}[0,T])$ be the class of all $C_{a,b}[0,T]$-valued 
continuous functions $\mathfrak x$  on the compact interval  $[0,T]$ with $\mathfrak x(0)=0$.
From \cite{KLe73} it follows that the class $\mathcal C_0^{\mathrm{gBm}}$ is a real separable Banach space 
with the norm 
\[
\|\mathfrak x\|_{\mathcal C_0^{\mathrm{gBm}}}=\sup_{s \in [0,T]} \|\mathfrak x(s)\|_{C_{a,b}[0,T]}
\]
and the minimal $\sigma$-field making the mapping $\mathfrak x \rightarrow \mathfrak x(s)$ 
measurable is  the Borel $\sigma$-field $\mathcal B(\mathcal C_0^{\mathrm{gBm}})$ on $\mathcal C_0^{\mathrm{gBm}}$.

\begin{remark}
The  paths space $(\mathcal C_0^{\mathrm{gBm}}, \|\cdot\|_{\mathcal C_0^{\mathrm{gBm}}})$ can be 
considered as a subspace of the topological product space $(C_{a,b}[0,T])^{[0,T]}$.
\end{remark} 

Furthermore, the generalized Brownian motion process in $C_{a,b}[0,T]$ induces a probability 
measure $ {\mu}_{\mathcal C_0^{\mathrm{gBm}}}$ on $(\mathcal C_0^{\mathrm{gBm}}, \mathcal{W}(\mathcal C_0^{\mathrm{gBm}}))$ 
where $\mathcal{W}(\mathcal C_0^{\mathrm{gBm}})$ is the complete $\sigma$-field in the sense of Carath\'eodory 
extension on the Borel $\sigma$-field $\mathcal{B}(\mathcal C_0^{\mathrm{gBm}})$. We will  introduce a concrete 
form of ${\mu}_{\mathcal C_0^{\mathrm{gBm}}}$. Let $\vec{s}=(s_1,\cdots,s_n)$ be given  with 
$0=s_0<s_1<\cdots< s_n\le T$, and let $T_{\vec{s}} : C_{a,b}^n[0,T] \rightarrow C_{a,b}^n[0,T]$ 
be defined by
\begin{equation}\label{eq:Tt}
T_{\vec{s}}(\vec{x})
\equiv T_{\vec{s}}(x_1,\ldots, x_n) 
=\big(L_{\vec{s},1}(\vec{x}),L_{\vec{s},2}(\vec{x}), \ldots, L_{\vec{s},n} (\vec{x})\big)
\end{equation}
where
\begin{equation}\label{eq:Ls}
L_{\vec{s},k}(\vec{x})
= \sum\limits_{l=1}^k \sqrt{b(s_l)-b(s_{l-1})} (x_l -a) +a
\end{equation}
for each $k=1,\ldots,n$. 
Let $\mu^n= \times_{1}^n \mu=\underbrace{\mu \times \cdots \times \mu}_{n-\text{times}}$ be the 
product measure on the product function space $C_{a,b}^n[0,T]$.  We then define a set function 
$\mu_{\vec{s}}$ on $\mathcal{B}(C_{a,b}^n[0,T])$ by 
\begin{equation}\label{projection-c0-A}
\mu_{\vec s}(E)=   \mu^n (T_{\vec s}^{-1}(E)) 
\end{equation}
for every $E$ in $\mathcal{B}(C_{a,b}^n[0,T])$. 
Then $\mu_{\vec s}$ is a Borel measure.
Next let $\mathfrak P_{\vec{s}} : \mathcal C_0^{\mathrm{gBm}} \to C_{a,b}^n[0,T]$ 
be the function with 
\begin{equation}\label{projection-c0-B}
\mathfrak P_{\vec{s}}(\mathfrak x)
= (\mathfrak  x(s_1),\mathfrak  x(s_2),\cdots,\mathfrak  x(s_n)).
\end{equation}
For Borel subsets $B_1,B_2,\cdots,B_n$ in $\mathcal B(C_{a,b}[0,T])$, 
$\mathfrak P_{\vec{s}}^{-1}(\prod_{l=1}^n B_l)$ is called a cylinder set with respect to 
$B_1,B_2,\cdots,B_n$.  
For each positive integer $n$  and  the cylinder function $\mathfrak P_{\vec s}$
given by \eqref{projection-c0-B} with $\vec s=(s_1,\ldots,s_n)$, let
\[
\mathcal I_{\vec s}=\bigg\{\mathfrak P_{\vec s}^{-1}\bigg(\prod_{j=1}^n B_j\bigg):  
B_1,B_2,\ldots,B_n  \in  \mathcal B(C_{a,b}[0,T]) \bigg\}
\]
and let
 $\mathcal{I}=\cup \mathcal I_{\vec s}$ where the union is over all ordered  multidimensional tuples $\vec s$.
Then for each $G\in \mathcal I$, there is a multidimensional tuple $\vec s=(s_1,\ldots, s_n)$ with 
 $0=s_0 < s_1 < \cdots < s_n \leq T$ such that $G\in \mathcal I_{\vec s}$.  
Given any positive integer $n$  and  Borel subsets $B_1,B_2,\ldots,B_n$ in $\mathcal B(C_{a,b}[0,T])$,
we  define  a set function  $\mu_{\mathcal{C}_0^{\mathrm{gBm}}}$ on $\mathcal{I}$ by 
\begin{equation}\label{projection-c0-C}
{\mu}_{\mathcal C_0^{\mathrm{gBm}}}\bigg(\mathfrak  P_{\vec{s}}^{-1}
\bigg(\prod_{l=1}^n B_l\bigg)\bigg)=\mu_{\vec{s}}\bigg(\prod_{l=1}^n B_l\bigg).
\end{equation}
Then ${\mu}_{\mathcal C_0}^{\mathrm{gBm}}$ is well-defined and is countably additive on $\mathcal{I}$. 
Using the Carath\'eodory extension process, it can be extended on the  $\sigma$-field  
$\mathcal{W}(\mathcal C_0^{\mathrm{gBm}})$, where $\mathcal{W}(\mathcal C_0^{\mathrm{gBm}})$ is the completion of the 
Borel  $\sigma$-field $\mathcal{B}(\mathcal C_0^{\mathrm{gBm}})$. The extended measure on  
$\mathcal{W}(\mathcal C_0^{\mathrm{gBm}})$  will be again denoted by  ${\mu}_{\mathcal C_0^{\mathrm{gBm}}}$. Hence we 
have the measure space $(\mathcal C_0^{\mathrm{gBm}},\mathcal{W}(\mathcal  C_0^{\mathrm{gBm}}),{\mu}_{\mathcal C_0^{\mathrm{gBm}}})$. 
This measure space is called the  space of generalized Wiener paths.
 
\begin{remark}
The transform $T_{\vec s}$ given by \eqref{eq:Tt} is formulated based 
on the transform   \eqref{eq:Tt-survey} together with  the probability low of 
the cylinder function \eqref{projection-c0-B}.  
\end{remark}
 
\par
Applying the techniques similar to those used in  the proof of the 
Kolmogorov extension theorem \cite[pp.4--17]{Yeh73},
we obtain the  following two lemmas.

\begin{lemma}
For each  multidimensional tuple  $\vec s$
with  $0=s_0 < s_1 < \cdots < s_n \leq T$, 
the class  $\mathcal I_{\vec s}$ is a $\sigma$-field. 
Furthermore, the class $\mathcal{I}=\cup \mathcal I_{\vec s}$  is a  field of subsets of $\mathcal C_0^{\mathrm{gBm}}$. 
\end{lemma}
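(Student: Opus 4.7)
The plan is to interpret $\mathcal I_{\vec s}$ in the sense suggested by the remark on product $\sigma$-fields earlier in the paper: namely, as $\{\mathfrak P_{\vec s}^{-1}(B) : B\in\mathcal B(C_{a,b}^n[0,T])\}$, where, by separability of $C_{a,b}[0,T]$, we have $\mathcal B(C_{a,b}^n[0,T]) = \otimes^n \mathcal B(C_{a,b}[0,T])$ and hence this class is generated by the product rectangles $\prod_{j=1}^n B_j$ appearing in the definition. Once this is understood, part 1 is essentially automatic: the map $\mathfrak P_{\vec s}:\mathcal C_0^{\mathrm{gBm}}\to C_{a,b}^n[0,T]$ is continuous (each coordinate $\mathfrak x\mapsto\mathfrak x(s_k)$ is continuous by the definition of the sup norm on $\mathcal C_0^{\mathrm{gBm}}$), and for any function $\mathfrak P_{\vec s}$ the collection $\mathfrak P_{\vec s}^{-1}(\mathcal B(C_{a,b}^n[0,T]))$ inherits the $\sigma$-field axioms from the target Borel $\sigma$-field because preimages commute with complements and with arbitrary unions.

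For part 2, I would verify the three field axioms for $\mathcal I=\cup_{\vec s}\mathcal I_{\vec s}$. Membership of $\emptyset$ and $\mathcal C_0^{\mathrm{gBm}}$ follows by choosing any $\vec s$ and using $B=\emptyset$ or $B=C_{a,b}^n[0,T]$. Closure under complementation is immediate: if $G=\mathfrak P_{\vec s}^{-1}(B)\in\mathcal I_{\vec s}$, then $G^c=\mathfrak P_{\vec s}^{-1}(B^c)\in\mathcal I_{\vec s}\subset\mathcal I$. The essential step is closure under finite unions, where one must show that if $G_i\in\mathcal I_{\vec s^{(i)}}$ for $i=1,2$, then $G_1\cup G_2\in\mathcal I_{\vec r}$ for some common tuple $\vec r$.

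To achieve this, I would form the refinement $\vec s=(s_1,\ldots,s_n)$ obtained by writing the entries of $\vec s^{(1)}\cup\vec s^{(2)}$ in strictly increasing order (and relabeling), and introduce, for each $i\in\{1,2\}$, the continuous coordinate-forgetting projection $\pi_i:C_{a,b}^n[0,T]\to C_{a,b}^{n_i}[0,T]$ that selects the factors corresponding to the times belonging to $\vec s^{(i)}$. The key identity
\[
\mathfrak P_{\vec s^{(i)}}=\pi_i\circ\mathfrak P_{\vec s}
\]
follows directly from \eqref{projection-c0-B}. Writing $G_i=\mathfrak P_{\vec s^{(i)}}^{-1}(B_i)$ with $B_i\in\mathcal B(C_{a,b}^{n_i}[0,T])$, this gives $G_i=\mathfrak P_{\vec s}^{-1}(\pi_i^{-1}(B_i))$, and since $\pi_i^{-1}(B_i)\in\mathcal B(C_{a,b}^n[0,T])$ by continuity of $\pi_i$, both $G_i$ lie in the same class $\mathcal I_{\vec s}$. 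Part 1 then yields $G_1\cup G_2\in\mathcal I_{\vec s}\subseteq\mathcal I$.

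The main obstacle is purely bookkeeping: one must be careful to sort the union of the two time grids, to define the forgetful projections $\pi_i$ using the correct index sets, and to verify the commutativity $\mathfrak P_{\vec s^{(i)}}=\pi_i\circ\mathfrak P_{\vec s}$ respects this reindexing. Once this is handled cleanly, there is no analytic content beyond the measurability of the projections and the general preimage argument of part 1. It is also worth remarking in the write-up that $\mathcal I$ is only a field and not a $\sigma$-field, since a countable union of cylinders may involve coordinates at countably many times and hence fall outside every individual $\mathcal I_{\vec s}$; this is precisely why the Carath\'eodory extension procedure invoked after the lemma is needed to produce $\mu_{\mathcal C_0^{\mathrm{gBm}}}$ on $\mathcal W(\mathcal C_0^{\mathrm{gBm}})$.
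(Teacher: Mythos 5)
Your proposal is correct and follows essentially the argument the paper intends: the paper gives no written proof of this lemma, only a pointer to the Kolmogorov-extension techniques in Yeh's book, and your argument (preimage $\sigma$-fields for part 1, passage to a common refinement of the time grids via coordinate-forgetting projections and the identity $\mathfrak P_{\vec s^{(i)}}=\pi_i\circ\mathfrak P_{\vec s}$ for part 2) is exactly that standard argument. One point in your write-up deserves emphasis because it silently repairs the paper's statement: as literally defined, $\mathcal I_{\vec s}$ consists only of preimages of product sets $\prod_{j=1}^n B_j$, and since $\mathfrak P_{\vec s}$ is surjective (the polygonal map $H_{\vec s}$ is a right inverse), that class is \emph{not} closed under complementation for $n\ge 2$ --- the complement of a rectangle is not a rectangle. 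Your reinterpretation of $\mathcal I_{\vec s}$ as $\mathfrak P_{\vec s}^{-1}(\mathcal B(C_{a,b}^n[0,T]))$, justified by separability so that $\mathcal B(C_{a,b}^n[0,T])=\otimes^n\mathcal B(C_{a,b}[0,T])$ is generated by the rectangles, is precisely what is needed for the lemma to be true as stated, and you are right to make it explicit.
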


\begin{lemma}
The set function  $\mu_{\mathcal C_0^{\mathrm{gBm}}}$ is well-defined and is countably additive 
on the field  $\mathcal{I}$.
Furthermore,  $\mu_{\mathcal C_0^{\mathrm{gBm}}}$ can be extended uniquely to be a probability 
measure  on the   $\sigma$-field  $\sigma(\mathcal I)$ generated by $\mathcal I$. 
\end{lemma}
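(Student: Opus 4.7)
The plan proceeds in three logical steps: first, verify that $\mu_{\mathcal C_0^{\mathrm{gBm}}}$ is well-defined on $\mathcal I$ through a Kolmogorov-type consistency; second, promote finite additivity to countable additivity on the field $\mathcal I$ via a continuity-at-$\emptyset$ argument; and third, invoke the Carath\'eodory extension theorem to obtain the unique extension to $\sigma(\mathcal I)$.

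For well-definedness, a single set $G\in\mathcal I$ may admit representations $G=\mathfrak P_{\vec s}^{-1}(E)=\mathfrak P_{\vec s'}^{-1}(E')$ for distinct ordered tuples $\vec s,\vec s'$. I would reduce to the case $\vec s\subset\vec s'$ by passing through a common refinement. The key observation is that the transform $T_{\vec s}$ in \eqref{eq:Tt}--\eqref{eq:Ls} is designed so that $\mu_{\vec s}=\mu^n\circ T_{\vec s}^{-1}$ has finite-dimensional marginals given by the GBMP transition kernel $K_n(\vec t,\vec \eta)$ from the introduction; this is precisely the content of the change-of-variables computation carried out in Section \ref{sec:MMo}. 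Therefore, when $\vec s'$ is obtained from $\vec s$ by inserting an extra time point, marginalization of $\mu_{\vec s'}$ over the new coordinate recovers $\mu_{\vec s}$. Iterating one-point refinements yields the general consistency, and finite additivity on $\mathcal I$ then follows immediately by pulling two cylinders back to a common $\vec s$ and applying finite additivity of the Borel measure $\mu_{\vec s}$ on $C_{a,b}^n[0,T]$.

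For countable additivity, which is the heart of the matter, I would show continuity at $\emptyset$: if $\{A_k\}\subset\mathcal I$ decreases to $\emptyset$ and $\mu_{\mathcal C_0^{\mathrm{gBm}}}(A_k)\ge \varepsilon>0$ for all $k$, then we derive a contradiction. Write $A_k=\mathfrak P_{\vec s^{(k)}}^{-1}(E_k)$ after refining so that $\vec s^{(k)}\subset\vec s^{(k+1)}$. Since $C_{a,b}^{n_k}[0,T]$ is a Polish space, each $\mu_{\vec s^{(k)}}$ is inner regular, so one may select compact sets $K_k\subset E_k$ with $\mu_{\vec s^{(k)}}(E_k\setminus K_k)<\varepsilon/2^{k+1}$. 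The cylinders $\mathfrak P_{\vec s^{(k)}}^{-1}(K_k)$ then have nonempty finite intersections, and a diagonal extraction through the coordinate projections produces a $C_{a,b}[0,T]$-valued function $\mathfrak x$ whose finite-dimensional values at the $\vec s^{(k)}$ lie in $K_k$; the consistency proved in step one and the compactness of the $K_k$ ensure that $\mathfrak x$ can be chosen continuous in the variable $s$, so $\mathfrak x\in\bigcap_k A_k$, contradicting $A_k\downarrow\emptyset$.

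With countable additivity on the field $\mathcal I$ in hand and $\mu_{\mathcal C_0^{\mathrm{gBm}}}(\mathcal C_0^{\mathrm{gBm}})=1$ (take $\vec s=(T)$ and $B_1=C_{a,b}[0,T]$ in \eqref{projection-c0-C}), the Carath\'eodory extension theorem yields a unique probability measure on $\sigma(\mathcal I)$ extending $\mu_{\mathcal C_0^{\mathrm{gBm}}}$. The main obstacle is the compactness/diagonal step in the preceding paragraph: because $C_{a,b}[0,T]$ is infinite-dimensional, extracting a genuine continuous $C_{a,b}[0,T]$-valued limit from the tight finite-dimensional marginals requires careful use of the consistency produced in step one together with the Polish-space tightness, mirroring the subtle parts of the classical Kolmogorov extension argument alluded to in the statement.
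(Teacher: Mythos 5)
The paper does not actually write out a proof of this lemma; it only asserts that it follows by ``techniques similar to those used in the proof of the Kolmogorov extension theorem'' in Yeh's book. Your outline follows exactly that standard route (consistency $\Rightarrow$ well-definedness and finite additivity; tightness and compact exhaustion $\Rightarrow$ continuity at $\emptyset$; Carath\'eodory $\Rightarrow$ unique extension), and the first and third steps are sound: the consistency under insertion of a time point does hold, because under $\mu\times\mu$ the sum $\sqrt{b(s^*)-b(s_{k-1})}\,(x'-a)+\sqrt{b(s_k)-b(s^*)}\,(x''-a)$ of independent centered Gaussian elements has the same law as $\sqrt{b(s_k)-b(s_{k-1})}\,(x-a)$, which is what the weights in \eqref{eq:Ls} are built to achieve (note this must be checked at the level of Gaussian measures on $C_{a,b}^n[0,T]$, not merely for the real-valued marginals governed by the kernel $K_n$ from the introduction, which is what your appeal to Section \ref{sec:MMo} actually covers).

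The genuine gap is in the countable additivity step, at the sentence claiming that ``the compactness of the $K_k$ ensure that $\mathfrak x$ can be chosen continuous in the variable $s$.'' The compact sets $K_k\subset C_{a,b}^{n_k}[0,T]$ constrain the path only at the finitely many times in $\vec s^{(k)}$, so the diagonal extraction produces a consistent thread $(y_s)_{s\in S}$ indexed by the countable set $S=\bigcup_k\vec s^{(k)}$ with no modulus-of-continuity control whatsoever in the time variable; such a thread need not extend to an element of $\mathcal C_0^{\mathrm{gBm}}$, and hence need not give a point of $\bigcap_k A_k$. This is precisely the obstruction that makes the Kolmogorov extension theorem a statement about product spaces rather than about spaces of continuous paths: for classical Wiener measure one must either (a) first extend on $(C_{a,b}[0,T])^{[0,T]}$ and then prove, via a Kolmogorov--Chentsov type moment estimate on the increments $L_{\vec s,k}-L_{\vec s,k-1}=\sqrt{b(s_k)-b(s_{k-1})}\,(x_k-a)$ (whose $p$-th norm moments are $O(|b(s_k)-b(s_{k-1})|^{p/2})$, with $b$ continuous), that the set of continuous paths has outer measure one, or (b) run the continuity-at-$\emptyset$ argument with compact sets taken in the path space $\mathcal C_0^{\mathrm{gBm}}$ itself, which again requires an equicontinuity (Arzel\`a--Ascoli) estimate, not just tightness of the finite-dimensional marginals. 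Your proof as written omits this estimate entirely, so the key step does not close; to be fair, the paper's one-line citation glosses over the same point.
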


\begin{remark} \label{rem:borel-cylinder}
The   $\sigma$-field  $\sigma(\mathcal I)$ generated by $\mathcal I$
coincides with the Borel $\sigma$-field $\mathcal B(\mathcal C_0^{\mathrm{gBm}})$.
Thus $\mathcal W(\mathcal C_0^{\mathrm{gBm}})$ is the completion of $\sigma(\mathcal I)$ .
\end{remark}

\par
Using the Carath\'eodory extension process, 
we also obtain the  following lemma.

\begin{lemma}
The measure  $\mu_{\mathcal C_0^{\mathrm{gBm}}}$ can be extended  uniquely
on the complete  $\sigma$-field  $\mathcal{W}(\mathcal C_0^{\mathrm{gBm}})$. 
\end{lemma}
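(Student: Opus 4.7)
The plan is to invoke the standard Carathéodory completion procedure applied to the probability measure obtained in the preceding lemma. By that lemma, $\mu_{\mathcal C_0^{\mathrm{gBm}}}$ is already well-defined and countably additive on the field $\mathcal I$, and extends uniquely to a probability measure on $\sigma(\mathcal I)$. By Remark \ref{rem:borel-cylinder}, $\sigma(\mathcal I)=\mathcal B(\mathcal C_0^{\mathrm{gBm}})$, so we start from a bona fide probability measure $\mu_{\mathcal C_0^{\mathrm{gBm}}}$ defined on the Borel $\sigma$-field.

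First, I would form the outer measure
\[
\mu^*(A)=\inf\bigg\{\sum_{k=1}^\infty \mu_{\mathcal C_0^{\mathrm{gBm}}}(E_k): E_k\in\mathcal B(\mathcal C_0^{\mathrm{gBm}}),\; A\subseteq\bigcup_{k=1}^\infty E_k\bigg\}
\]
on arbitrary subsets $A\subseteq\mathcal C_0^{\mathrm{gBm}}$. A standard verification shows that $\mu^*$ is a countably subadditive monotone set function vanishing on the empty set, hence an outer measure. Carathéodory's theorem then produces a $\sigma$-field $\mathfrak M_{\mu^*}$ of $\mu^*$-measurable sets on which $\mu^*$ restricts to a complete measure. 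Since $\mu_{\mathcal C_0^{\mathrm{gBm}}}$ is already a measure on $\mathcal B(\mathcal C_0^{\mathrm{gBm}})$, we have $\mathcal B(\mathcal C_0^{\mathrm{gBm}})\subseteq\mathfrak M_{\mu^*}$ and $\mu^*$ agrees with $\mu_{\mathcal C_0^{\mathrm{gBm}}}$ on $\mathcal B(\mathcal C_0^{\mathrm{gBm}})$.

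Next, I would identify $\mathfrak M_{\mu^*}$ with $\mathcal W(\mathcal C_0^{\mathrm{gBm}})$. Because $\mu_{\mathcal C_0^{\mathrm{gBm}}}$ is a finite (probability) measure, a set $A$ lies in $\mathfrak M_{\mu^*}$ if and only if there exist $E_1,E_2\in\mathcal B(\mathcal C_0^{\mathrm{gBm}})$ with $E_1\subseteq A\subseteq E_2$ and $\mu_{\mathcal C_0^{\mathrm{gBm}}}(E_2\setminus E_1)=0$; equivalently, $A=E_1\cup N$ where $N$ is a subset of a Borel null set. This is precisely the description of the completion $\mathcal W(\mathcal C_0^{\mathrm{gBm}})$ of $\mathcal B(\mathcal C_0^{\mathrm{gBm}})$ with respect to $\mu_{\mathcal C_0^{\mathrm{gBm}}}$. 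Therefore the extended set function, which I continue to denote by $\mu_{\mathcal C_0^{\mathrm{gBm}}}$, is a complete probability measure on $\mathcal W(\mathcal C_0^{\mathrm{gBm}})$.

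Finally, uniqueness: any probability measure on $\mathcal W(\mathcal C_0^{\mathrm{gBm}})$ that agrees with $\mu_{\mathcal C_0^{\mathrm{gBm}}}$ on $\mathcal B(\mathcal C_0^{\mathrm{gBm}})$ must assign measure zero to every subset of a Borel null set (by completeness together with monotonicity), which forces equality on every set of the form $E\cup N$ above; hence the extension is unique. The argument is entirely routine once the previous lemma has delivered a Borel probability measure, so no step presents a genuine obstacle; the only point requiring care is the identification $\mathfrak M_{\mu^*}=\mathcal W(\mathcal C_0^{\mathrm{gBm}})$, which relies on the finiteness of $\mu_{\mathcal C_0^{\mathrm{gBm}}}$ and the standard sandwich characterization of completion.
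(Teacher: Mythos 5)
Your argument is correct and is precisely the route the paper takes: the paper simply invokes ``the Carath\'eodory extension process'' applied to the Borel probability measure produced by the preceding lemma (together with Remark \ref{rem:borel-cylinder} identifying $\sigma(\mathcal I)$ with $\mathcal B(\mathcal C_0^{\mathrm{gBm}})$), and your proposal supplies the standard details of that process, including the identification of the Carath\'eodory-measurable sets with the completion via the sandwich characterization for finite measures. No discrepancy with the paper's (terse) proof.
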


The extended measure on  $\mathcal{W}(\mathcal C_0^{\mathrm{gBm}})$  will be again denoted by  
$\mu_{\mathcal C_0^{\mathrm{gBm}}}$. Hence we have the complete measure space 
$(\mathcal C_0^{\mathrm{gBm}},\mathcal{W}(\mathcal  C_0^{\mathrm{gBm}}),\mu_{\mathcal C_0^{\mathrm{gBm}}})$. 
This measure space is called the  space of GBMP paths 
(henceforth, GBMP paths space or paths space).

\par
Now, we introduce a paths space integration theorem on the paths space $\mathcal C_0^{\mathrm{gBm}}$.

\begin{theorem}[Paths Space Integration Theorem] \label{thm:well} 
Let $\vec{s}=(s_1,\cdots,s_n)$ be given with $0=s_0 < s_1 < \cdots < s_n \le T$ 
and let $f: C_{a,b}^n[0,T] \rightarrow \mathbb{C}$ be a $\mathcal W(C_{a,b}^n[0,T])$-measurable 
function. Then
\begin{equation}\label{eq:well}
\int_{\mathcal C_0} f(\mathfrak x(s_1),\ldots,\mathfrak x (s_n)) 
d{\mu}_{\mathcal C_0^{\mathrm{gBm}}}(\mathfrak x)  
\stackrel{*}{=} 
\int_{C_{a,b}^n[0,T]} f( T_{\vec{s}}(x_1,\ldots,x_n)) d \mu^n(x_1,\ldots,x_n),
\end{equation}
where $\stackrel{*}{=}$ means that if either side exists, both sides exist and equality holds.
\[
\begin{array}{c}
\xymatrix{ && C_{a,b}^n[0,T] \ar[dd]^{F}  & &\\
\mathcal C_0^{\mathrm{gBm}} \ar[rru]^{ \mathfrak P_{\vec s}} \ar[rrd]_{f \circ \mathfrak P_{\vec s}}   
&&&& C_{a,b}^n[0,T]\ar[lld]^{ f\circ T_{\vec s}} \ar[llu]_{ T_{\vec s}}\\%%%\ar[lld]^{ f\circ S_n} \ar[llu]^{S_n }\\
&&\mathbb C &&\\
}
\end{array}
\] 
\end{theorem}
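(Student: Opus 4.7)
The plan is to follow the classical template for measure-theoretic transfer formulas: verify the identity first on indicator functions of the generating class, extend by linearity to simple functions, and then pass to arbitrary measurable functions via the monotone convergence theorem together with the standard decomposition into positive and negative real and imaginary parts.

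First I would fix $\vec s = (s_1,\ldots,s_n)$ with $0=s_0 < s_1 < \cdots < s_n \le T$ and consider the cylinder function $\mathfrak P_{\vec s}:\mathcal C_0^{\mathrm{gBm}} \to C_{a,b}^n[0,T]$ defined in \eqref{projection-c0-B}. For $f = \chi_{G}$ where $G = \prod_{l=1}^n B_l$ is a measurable rectangle in $C_{a,b}^n[0,T]$, the definitions \eqref{projection-c0-A} and \eqref{projection-c0-C} give
\[
\int_{\mathcal C_0^{\mathrm{gBm}}} \chi_{G}(\mathfrak P_{\vec s}(\mathfrak x)) \, d\mu_{\mathcal C_0^{\mathrm{gBm}}}(\mathfrak x)
= \mu_{\mathcal C_0^{\mathrm{gBm}}}(\mathfrak P_{\vec s}^{-1}(G))
= \mu_{\vec s}(G)
= \mu^n(T_{\vec s}^{-1}(G)),
\]
while the right-hand side of \eqref{eq:well} is exactly $\mu^n(T_{\vec s}^{-1}(G))$, so equality holds. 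The Borel $\sigma$-field $\mathcal B(C_{a,b}^n[0,T])$ coincides with the product $\sigma$-field $\otimes^n \mathcal B(C_{a,b}[0,T])$ by separability (cf.\ the earlier remark on the product Borel structure), so the class of measurable rectangles is a $\pi$-system generating $\mathcal B(C_{a,b}^n[0,T])$; a standard $\pi$--$\lambda$ argument then promotes the identity to $\chi_E$ for every $E \in \mathcal B(C_{a,b}^n[0,T])$, and a completion argument extends it to every $E \in \mathcal W(C_{a,b}^n[0,T])$.

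Once the identity holds for indicators, linearity yields it for every simple function $f = \sum_{k=1}^N c_k \chi_{E_k}$. For a general nonnegative $\mathcal W(C_{a,b}^n[0,T])$-measurable $f$, choose an increasing sequence of simple functions $f_j \uparrow f$; measurability of $f \circ \mathfrak P_{\vec s}$ and $f \circ T_{\vec s}$ is guaranteed by the measurability of $\mathfrak P_{\vec s}$ and $T_{\vec s}$, and two applications of the monotone convergence theorem (one on $\mathcal C_0^{\mathrm{gBm}}$ and one on $C_{a,b}^n[0,T]$) give \eqref{eq:well} including the $+\infty$ value. For a general complex-valued measurable $f$, decompose $f = (\mathrm{Re}\,f)^+ - (\mathrm{Re}\,f)^- + i(\mathrm{Im}\,f)^+ - i(\mathrm{Im}\,f)^-$ and apply the nonnegative case to each piece; the existence of either side then forces integrability of each piece and hence existence of the other side with equal value, which is precisely the meaning of $\stackrel{*}{=}$.

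The only nontrivial point I expect to have to be careful with is the passage from measurable rectangles to the full Borel (and then completed) $\sigma$-field, because the set function $\mu_{\mathcal C_0^{\mathrm{gBm}}}$ is defined through the cylinder classes $\mathcal I_{\vec s}$ and so the equality $\mu_{\mathcal C_0^{\mathrm{gBm}}} \circ \mathfrak P_{\vec s}^{-1} = \mu^n \circ T_{\vec s}^{-1}$ has to be promoted from the generating family to all of $\mathcal B(C_{a,b}^n[0,T])$; the preceding lemmas together with the Dynkin $\pi$--$\lambda$ theorem handle this cleanly. Everything else is the standard measure-theoretic extension and should cause no difficulty.
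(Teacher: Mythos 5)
Your proposal is correct, but it is organized differently from the paper's proof, and the comparison is worth making explicit. The paper's argument is shorter and more top-down: it first establishes measurability of $f\circ\mathfrak P_{\vec s}$ via Lemma \ref{lem:300}, and then applies the abstract image-measure (change of variables) lemma \eqref{halmos} twice, chaining together the identities $\mu_{\mathcal C_0^{\mathrm{gBm}}}\circ\mathfrak P_{\vec s}^{-1}=\mu_{\vec s}$ (from \eqref{projection-c0-C}) and $\mu_{\vec s}=\mu^n\circ T_{\vec s}^{-1}$ (from \eqref{projection-c0-A}), so that the whole theorem reduces to an equality of pushforward measures. Your proof instead unpacks that black box and runs the standard machine by hand: indicators of measurable rectangles, a $\pi$--$\lambda$ promotion to $\mathcal B(C_{a,b}^n[0,T])$, completion, linearity, monotone convergence, and the four-fold decomposition of a complex $f$. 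The two routes rest on exactly the same measure identities; what your version buys is that it makes explicit the Dynkin-system step needed to pass from the defining relation \eqref{projection-c0-C}, which is stated only on rectangles $\prod_{l=1}^n B_l$, to the full equality $\mu_{\mathcal C_0^{\mathrm{gBm}}}\circ\mathfrak P_{\vec s}^{-1}=\mu_{\vec s}$ on all of $\mathcal B(C_{a,b}^n[0,T])$ --- a step the paper uses silently. What the paper's version buys is brevity and a clean reduction to a quotable lemma. One point you should not leave at the level of ``a completion argument'': for $E\in\mathcal W(C_{a,b}^n[0,T])\setminus\mathcal B(C_{a,b}^n[0,T])$ you need the correspondence between $\mu^n$-null (equivalently $\mu_{\vec s}$-null) sets in $C_{a,b}^n[0,T]$ and $\mu_{\mathcal C_0^{\mathrm{gBm}}}$-null sets of the form $\mathfrak P_{\vec s}^{-1}(N)$; this is precisely the content of Corollaries \ref{coro:2019-002} and \ref{coro:2019-003} and Lemma \ref{lem:300}, and it is also what guarantees that $f\circ\mathfrak P_{\vec s}$ is $\mathcal W(\mathcal C_0^{\mathrm{gBm}})$-measurable when $f$ is merely $\mathcal W(C_{a,b}^n[0,T])$-measurable rather than Borel. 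With that reference supplied, your argument is complete.
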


\section{Proof of the paths space integration theorem}
\par
In order to prove the paths space integration theorem,
we need the following  lemmas.

\begin{lemma}[\cite{halmos}]
Let $T$ be a  measurable transform from a measure space $(X,\mathcal S,\mu)$
into a measurable space $(Y,\mathcal T)$, and let $g$ be an extended real valued 
measurable function on $Y$. Then 
\begin{equation}\label{halmos}
\int_{Y} g(y)d \mu\circ T^{-1}(y)=\int_X g(T(x))d\mu(x)
\end{equation}
in the sense that if either integral exists, 
then both sides exist and they are equal.
\end{lemma}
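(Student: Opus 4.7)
The plan is to prove equation \eqref{halmos} by the standard four-step bootstrapping argument that threads through indicator functions, simple functions, nonnegative measurable functions, and finally general extended real-valued measurable functions. This is the classical pushforward-measure identity, and the proof structure is dictated by the layered construction of the Lebesgue integral itself.

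First I would verify the identity for $g = \chi_B$ with $B \in \mathcal T$. The right-hand side reduces to $\int_X \chi_{T^{-1}(B)}(x)\, d\mu(x) = \mu(T^{-1}(B))$, using the key identity $\chi_B \circ T = \chi_{T^{-1}(B)}$ and the fact that measurability of $T$ guarantees $T^{-1}(B) \in \mathcal S$. The left-hand side is $\int_Y \chi_B(y)\, d(\mu\circ T^{-1})(y) = (\mu\circ T^{-1})(B)$, and the two sides agree by the very definition of the pushforward measure $\mu \circ T^{-1}$. Linearity of the integral then immediately upgrades the identity to every nonnegative simple $\mathcal T$-measurable function $g = \sum_{j=1}^n \alpha_j \chi_{B_j}$; in that case $g \circ T = \sum_{j=1}^n \alpha_j \chi_{T^{-1}(B_j)}$ remains a nonnegative simple $\mathcal S$-measurable function.

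To pass to an arbitrary nonnegative measurable $g: Y \to [0,\infty]$, I would choose an increasing sequence of nonnegative simple measurable functions $g_k \uparrow g$ pointwise on $Y$, observe that each $g_k \circ T$ is $\mathcal S$-measurable and that $g_k \circ T \uparrow g \circ T$ pointwise on $X$, and apply the monotone convergence theorem to both sides of the identity already established for the $g_k$. This yields $\int_Y g\, d(\mu \circ T^{-1}) = \int_X g \circ T\, d\mu$ for every nonnegative measurable $g$, with both sides simultaneously finite or $+\infty$.

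Finally, for a general extended real-valued measurable $g$, I would decompose $g = g^+ - g^-$ and apply the nonnegative case to $g^+$ and $g^-$ separately. The ``if either integral exists'' clause then follows because the nonnegative case shows that $\int_Y g^\pm\, d(\mu \circ T^{-1})$ and $\int_X g^\pm \circ T\, d\mu$ are simultaneously finite, so the standard Lebesgue integrability criterion (that at least one of the positive or negative parts has finite integral) holds on one side precisely when it holds on the other, and the two integrals coincide whenever either exists. There is no serious obstacle in this proof; the only minor subtlety is the bookkeeping for the simultaneous-existence clause, which is entirely subsumed by the monotone convergence step in the nonnegative case.
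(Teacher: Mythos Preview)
Your proof is correct and is precisely the classical bootstrapping argument. The paper does not supply its own proof of this lemma; it is stated with a citation to Halmos's \emph{Measure Theory}, where exactly this four-step scheme (indicators, simple functions, monotone convergence for nonnegative functions, then the $g^{+}-g^{-}$ decomposition) is carried out, so your approach coincides with the intended reference.
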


\begin{lemma}\label{lem:1-2}  Let $\vec s=(s_1,\ldots, s_n)$ 
be a multidimensional tuple with  $0=s_0 < s_1 < \cdots < s_n \leq T$. 
Then,   
\begin{itemize}
  \item[(i)] $\mathfrak P_{\vec s}^{-1}(B)$ is in $\mathcal B(\mathcal C_0^{\mathrm{gBm}})$ for every $B\in \mathcal B(C_{a,b}^n[0,T])$.
  \item[(ii)] For a subset $B$ of $C_{a,b}^n[0,T]$ with
$\mathfrak P_{\vec s}^{-1}(B)\in  \mathcal B(\mathcal C_0^{\mathrm{gBm}})$,   $B$ is in $\mathcal B(C_{a,b}^n[0,T])$.
\end{itemize}
\end{lemma}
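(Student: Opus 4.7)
\medskip

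The plan is to prove the two parts by exhibiting $\mathfrak P_{\vec s}$ as a continuous map with a continuous right inverse. For part (i), I would simply observe that for any $\mathfrak x,\mathfrak y \in \mathcal C_0^{\mathrm{gBm}}$ and any $k=1,\dots,n$,
\[
\|\mathfrak x(s_k)-\mathfrak y(s_k)\|_{C_{a,b}[0,T]}
\le \sup_{s\in[0,T]}\|\mathfrak x(s)-\mathfrak y(s)\|_{C_{a,b}[0,T]}
= \|\mathfrak x-\mathfrak y\|_{\mathcal C_0^{\mathrm{gBm}}},
\]
so the coordinate map $\mathfrak P_{\vec s}:\mathcal C_0^{\mathrm{gBm}}\to C_{a,b}^n[0,T]$ (endowed with, say, the maximum of the sup norms as an equivalent product norm) is Lipschitz continuous, hence Borel measurable. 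Therefore $\mathfrak P_{\vec s}^{-1}(B)\in\mathcal B(\mathcal C_0^{\mathrm{gBm}})$ for every $B\in \mathcal B(C_{a,b}^n[0,T])$.

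For part (ii), the key is to construct a continuous right inverse $\iota:C_{a,b}^n[0,T]\to\mathcal C_0^{\mathrm{gBm}}$ with $\mathfrak P_{\vec s}\circ\iota = \mathrm{id}_{C_{a,b}^n[0,T]}$. I would use piecewise linear interpolation in the parameter $s$: setting $s_0=0$ and $x_0\equiv 0\in C_{a,b}[0,T]$, define, for $\vec x=(x_1,\dots,x_n)\in C_{a,b}^n[0,T]$,
\[
\iota(\vec x)(s) =
\begin{cases}
\dfrac{s_k-s}{s_k-s_{k-1}}\, x_{k-1} + \dfrac{s-s_{k-1}}{s_k-s_{k-1}}\, x_k, & s\in[s_{k-1},s_k],\ k=1,\dots,n,\\[1ex]
x_n, & s\in[s_n,T].
\end{cases}
\]
Since $C_{a,b}[0,T]$ is a Banach space, $\iota(\vec x)$ is a continuous $C_{a,b}[0,T]$-valued function on $[0,T]$, and $\iota(\vec x)(0)=x_0=0$, so $\iota(\vec x)\in\mathcal C_0^{\mathrm{gBm}}$. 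By construction $\iota(\vec x)(s_k)=x_k$, hence $\mathfrak P_{\vec s}(\iota(\vec x))=\vec x$. A triangle-inequality estimate on the convex combinations gives
\[
\|\iota(\vec x)-\iota(\vec y)\|_{\mathcal C_0^{\mathrm{gBm}}}
\le \max_{1\le k\le n}\|x_k-y_k\|_{C_{a,b}[0,T]},
\]
so $\iota$ is (Lipschitz) continuous, and in particular Borel measurable.

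With $\iota$ in hand, part (ii) follows at once. Given a set $B\subseteq C_{a,b}^n[0,T]$ with $\mathfrak P_{\vec s}^{-1}(B)\in\mathcal B(\mathcal C_0^{\mathrm{gBm}})$, the identity $\mathfrak P_{\vec s}\circ\iota = \mathrm{id}$ yields
\[
\iota^{-1}\bigl(\mathfrak P_{\vec s}^{-1}(B)\bigr)
= (\mathfrak P_{\vec s}\circ\iota)^{-1}(B) = B,
\]
and since $\iota$ is Borel measurable, $B\in\mathcal B(C_{a,b}^n[0,T])$.

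The only real obstacle I anticipate is the construction of $\iota$: one must verify that the piecewise linear interpolant is genuinely a continuous map from $[0,T]$ into $C_{a,b}[0,T]$ (agreement of the two formulas at the junctions $s=s_k$ is immediate), that it satisfies $\iota(\vec x)(0)=0$ (which uses $s_0=0$ and the convention $x_0=0$), and that the assignment $\vec x\mapsto\iota(\vec x)$ is continuous into the sup-over-$s$ norm. All of these are routine once the formula is written down, but the Lipschitz bound above is the main computational point to check carefully.
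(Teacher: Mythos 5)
Your proof is correct and follows essentially the same route as the paper: continuity of $\mathfrak P_{\vec s}$ for part (i), and for part (ii) a continuous polygonal section $\iota$ with $\mathfrak P_{\vec s}\circ\iota=\mathrm{id}$, from which $B=\iota^{-1}\bigl(\mathfrak P_{\vec s}^{-1}(B)\bigr)$ is Borel. The only (immaterial) difference is that you interpolate linearly in $s$ while the paper interpolates linearly in $b(s)$, i.e.\ uses weights $\frac{b(s)-b(s_{j-1})}{b(s_j)-b(s_{j-1})}$; since $b$ is continuous and strictly increasing, both sections have all the properties the argument needs.
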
 
\begin{proof} (i) The cylinder function  $\mathfrak P_{\vec s}$ given by \eqref{projection-c0-B} is 
continuous, it is $\mathcal B(\mathcal C_0^{\mathrm{gBm}})-\mathcal B(C_{a,b}^n[0,T])$-measurable.
Hence $\mathfrak P_{\vec s}^{-1}(B) \in \mathcal B(\mathcal C_0^{\mathrm{gBm}})$  for any  $B\in \mathcal B(C_{a,b}^n[0,T])$.

(ii) Given a multidimensional tuple $\vec s=(s_1,\ldots, s_n)$ 
with  $0=s_0 < s_1 < \cdots < s_n \leq T$, define a map 
$H_{\vec s}: C_{a,b}^n[0,T] \to \mathcal C_0^{\mathrm{gBm}}$ by 
$H_{\vec s}(\vec x)\equiv H_{\vec s}(x_1,\ldots,x_n)$ 
to be the polygonal path in $\mathcal C_0^{\mathrm{gBm}}$ 
such as 
\[
\begin{aligned}
&H_{\vec s}(x_1,\ldots,x_n)(s)\\
&=\begin{cases}
 x_{j-1}+\frac{b(s)-b(s_{j-1})}{b(s_j)-b(s_{j-1})}(x_j-x_{j-1}),&    s\in [s_{j-1},s_j], \,\,j=1,\ldots, n \\
x_n, & s\in[s_n,T]
\end{cases}, 
\end{aligned}
\]
where $x_0=0$ (the zero function on [0,T]).

\par
 We   note that the Borel $\sigma$-field 
$\mathcal B(\mathcal C_0^{\mathrm{gBm}})$ (resp. $\mathcal B(C_{a,b}[0,T])$)
can be generated   by the uniform topology   %%which is 
induced by the sup norm $\|\cdot\|_{\mathcal C_0^{\mathrm{gBm}}}$ (resp. $\|\cdot\|_{C_{a,b}[0,T]}$).
Given $\vec x=(x_1,\ldots,x_n)\in C_{a,b}^n[0,T]$, let $\langle{\vec{x_k}}\rangle
=\langle{(x_{k,1},\ldots,x_{k,n})}\rangle$ be a sequence in $C_{a,b}^n[0,T]$
which converges to $\vec x$, i.e.,  for each $j=1,\ldots,n$,  $\lim_{k\to\infty}x_{k,j}=x_j$ in 
$(C_{a,b}[0,T], \|\cdot\|_{C_{a,b}[0,T]})$.
From this and the definition of $H_{\vec s}$,  it follows  that $H_{\vec s}(\vec{x_{k}})$ 
converges to $H_{\vec s}(\vec x)$, uniformly on $[0,T]$, as $k\to\infty$.
Hence  the map $H_{\vec s}: (C_{a,b}^n[0,T],\mathcal B(C_{a,b}^n[0,T])) \to  
(\mathcal C_0^{\mathrm{gBm}},\mathcal B(\mathcal C_0^{\mathrm{gBm}}))$ is continuous,
and so is $\mathcal B(C_{a,b}^n[0,T])-\mathcal B(\mathcal C_0^{\mathrm{gBm}})$-measurable.
It thus follows that 
$H_{\vec s}^{-1}(\mathfrak P_{\vec s}^{-1}(B)) \in \mathcal B(C_{a,b}^n[0,T])$
for each $B\in \mathcal B(C_{a,b}^n[0,T])$.
 To complete the proof of  the assertion (ii), it thus   suffices to show that
for each $B\in \mathcal B(C_{a,b}^n[0,T])$, $H_{\vec s}^{-1}(\mathfrak P_{\vec s}^{-1}(B))=B$. 
First, take any $(x_1,\ldots,x_n)\in H_{\vec s}^{-1}(\mathfrak P_{\vec s}^{-1}(B))$.
Then, $H_{\vec s}(x_1,\ldots,x_n)\in \mathfrak P_{\vec s}^{-1}(B)$ and hence
$ \mathfrak P_{\vec s}(H_{\vec s}(x_1,\ldots,x_n))\in B$.
Thus, it  follows that
\[
\begin{aligned}
(x_1,\ldots,x_n) 
&=(H_{\vec s}(x_1,\ldots,x_n)(s_1), \ldots, H_{\vec s}(x_1,\ldots,x_n)(s_n))\\
&=\mathfrak P_{\vec s}(H_{\vec s}(x_1,\ldots,x_n)) \\
&\in B. 
\end{aligned} 
\]
Conversely,  by the inverse image property of maps, it follows that for each $B\in \mathcal B(C_{a,b}^n[0,T])$, 
$B \subset (\mathfrak P_{\vec s} \circ H_{\vec s})^{-1}(B)= H_{\vec s}^{-1}(\mathfrak P_{\vec s}^{-1}(B))$,
as desired.
\end{proof}

\begin{remark} \label{rem:2019-001}
Lemma \ref{lem:1-2} tells us that given any multidimensional tuple  $\vec s=(s_1,\ldots, s_n)$  
with  $0=s_0 < s_1 < \cdots < s_n \leq T$  and any  a subset $B$ of $C_{a,b}^n[0,T]$,
$\mathfrak P_{\vec t}^{-1} (B)\in \mathcal B(\mathcal C_0^{\mathrm{gBm}})$ if and only if $B\in \mathcal B(C_{a,b}^n[0,T])$.
 \end{remark}

 Our next theorem  follows quite readily from the techniques developed in
\cite[Section 3]{CA83} and Remark  \ref{rem:2019-001}.

\begin{lemma}[Converse measurability theorem]\label{thm:400}  
 Let $\vec s=(s_1,\ldots, s_n)$ be as in Lemma \ref{lem:1-2}.
For a subset $B$ of $C_{a,b}^n[0,T]$ with
$\mathfrak P_{\vec s}^{-1}(B)\in  \mathcal W(\mathcal C_{a,b}^{\mathrm{gBm}})$,   $B$ is in $\mathcal W(C_{a,b}^n[0,T])$.
\end{lemma}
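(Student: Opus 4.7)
The plan is to promote the Borel converse-measurability result in Lemma \ref{lem:1-2}(ii) to the Carath\'eodory completions, following the outer-measure technique of \cite[Section~3]{CA83}. The key device is the continuous section $H_{\vec s}:C_{a,b}^n[0,T]\to\mathcal C_0^{\mathrm{gBm}}$ constructed in the proof of Lemma \ref{lem:1-2}, which satisfies $\mathfrak P_{\vec s}\circ H_{\vec s}=\mathrm{id}$ and therefore gives the set-theoretic identity $H_{\vec s}^{-1}(\mathfrak P_{\vec s}^{-1}(B))=B$ for every $B\subseteq C_{a,b}^n[0,T]$.

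First I would unpack the hypothesis via the structure of the completion: since $\mathfrak P_{\vec s}^{-1}(B)\in\mathcal W(\mathcal C_0^{\mathrm{gBm}})$, there exist Borel sets $E_1,E_2\in\mathcal B(\mathcal C_0^{\mathrm{gBm}})$ with $E_1\subseteq\mathfrak P_{\vec s}^{-1}(B)\subseteq E_2$ and $\mu_{\mathcal C_0^{\mathrm{gBm}}}(E_2\setminus E_1)=0$. By continuity of $H_{\vec s}$, the pulled-back sets $B_i:=H_{\vec s}^{-1}(E_i)$ lie in $\mathcal B(C_{a,b}^n[0,T])$ and sandwich $B$, i.e., $B_1\subseteq B\subseteq B_2$.

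The essential task is then to show $\mu^n(B_2\setminus B_1)=0$, and this is the delicate point, since $\mu^n\circ H_{\vec s}^{-1}$ and $\mu_{\mathcal C_0^{\mathrm{gBm}}}$ need not agree even on cylinder sets (compare \eqref{projection-c0-A}--\eqref{projection-c0-C}). Following the method of \cite[Section~3]{CA83}, the resolution is to refine the Borel envelopes $E_1,E_2$ so that they lie inside the cylinder sub-$\sigma$-field $\mathcal I_{\vec s}$: one uses the outer-measure regularity of $\mu_{\mathcal C_0^{\mathrm{gBm}}}$ with respect to $\mathcal I_{\vec s}$ (which exploits the cylinder-defined nature of $\mu_{\mathcal C_0^{\mathrm{gBm}}}$ encoded in \eqref{projection-c0-C}) to produce cylindrical envelopes $\mathfrak P_{\vec s}^{-1}(A_1)\subseteq\mathfrak P_{\vec s}^{-1}(B)\subseteq\mathfrak P_{\vec s}^{-1}(A_2)$ with $A_1,A_2\in\mathcal B(C_{a,b}^n[0,T])$ and $\mu_{\mathcal C_0^{\mathrm{gBm}}}(\mathfrak P_{\vec s}^{-1}(A_2\setminus A_1))=\mu_{\vec s}(A_2\setminus A_1)=0$. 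By Lemma \ref{lem:1-2}(ii) (equivalently Remark \ref{rem:2019-001}) this forces $A_1\subseteq B\subseteq A_2$, and the defining relation $\mu_{\vec s}=\mu^n\circ T_{\vec s}^{-1}$ in \eqref{projection-c0-A}, combined with the continuity and bijectivity of the affine transform $T_{\vec s}$, then transfers the $\mu_{\vec s}$-null envelope into a $\mu^n$-null envelope for $B$, placing $B$ in $\mathcal W(C_{a,b}^n[0,T])$.

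The principal obstacle is the outer-measure refinement to cylinder approximants within $\mathcal I_{\vec s}$: one must verify that a general Borel envelope of the cylindrical set $\mathfrak P_{\vec s}^{-1}(B)$ can be replaced by a cylindrical envelope in $\mathcal I_{\vec s}$ without destroying the null-envelope property, and that the resulting $\mu_{\vec s}$-null condition pushes through to $\mu^n$. All the remaining steps are set-theoretic bookkeeping organized around the section $H_{\vec s}$ and the identity $\mathfrak P_{\vec s}\circ H_{\vec s}=\mathrm{id}$.
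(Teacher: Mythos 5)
The paper offers no argument for this lemma beyond citing \cite{CA83} and Remark \ref{rem:2019-001}, so your proposal must stand on its own, and as written it has two gaps, the second of which is a step that fails as justified. The first gap is the reduction to cylindrical envelopes: the existence of $A_1,A_2\in\mathcal B(C_{a,b}^n[0,T])$ with $\mathfrak P_{\vec s}^{-1}(A_1)\subseteq\mathfrak P_{\vec s}^{-1}(B)\subseteq\mathfrak P_{\vec s}^{-1}(A_2)$ and $\mu_{\vec s}(A_2\setminus A_1)=0$ is not a consequence of any general ``outer-measure regularity of $\mu_{\mathcal C_0^{\mathrm{gBm}}}$ with respect to $\mathcal I_{\vec s}$.'' For a proper sub-$\sigma$-field it is simply false that Borel sets can be approximated in measure from outside by sets of the sub-$\sigma$-field; what rescues the $\mathfrak P_{\vec s}$-saturated set $\mathfrak P_{\vec s}^{-1}(B)$ is precisely the content of the converse measurability theorem. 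The actual mechanism in \cite{CA83} and its antecedents is a Fubini argument: one splits the process into the polygonal path through $(\mathfrak x(s_1),\ldots,\mathfrak x(s_n))$ plus an independent bridge component, realizes $(\mathcal C_0^{\mathrm{gBm}},\mu_{\mathcal C_0^{\mathrm{gBm}}})$ as a product with first factor $(C_{a,b}^n[0,T],\mu_{\vec s})$ under which $\mathfrak P_{\vec s}$ is the first projection, and applies the complete-measure Fubini theorem to the set $B\times Z$ to conclude that $B$ lies in the $\mu_{\vec s}$-completion of $\mathcal B(C_{a,b}^n[0,T])$. Your section $H_{\vec s}$ cannot substitute for this decomposition: as you yourself observe, $\mu^n\circ(H_{\vec s}\circ T_{\vec s})^{-1}$ is carried by the $\mu_{\mathcal C_0^{\mathrm{gBm}}}$-null set of polygonal paths, so pulling Borel envelopes back through $H_{\vec s}$ gives no control on measure, and no concrete replacement is supplied.

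The second gap is the final transfer. Even granting $A_1\subseteq B\subseteq A_2$ with $\mu_{\vec s}(A_2\setminus A_1)=0$, membership of $B$ in $\mathcal W(C_{a,b}^n[0,T])$ requires $\mu^n(A_2\setminus A_1)=0$, and ``continuity and bijectivity of the affine transform $T_{\vec s}$'' do not deliver this: a homeomorphism need not carry null sets to null sets. What is actually needed is mutual absolute continuity of $\mu_{\vec s}=\mu^n\circ T_{\vec s}^{-1}$ and $\mu^n$ --- the infinite-dimensional analogue of the mutual absolute continuity of $m_w\circ P_{(t_1,\ldots,t_n)}^{-1}$ and Lebesgue measure used in the classical survey --- and this is exactly the equivalence (i)$\Leftrightarrow$(ii) of Corollary \ref{coro:2019-003}, which cannot be invoked here without circularity and is itself delicate: $T_{\vec s}$ is built from the dilations $x\mapsto\sqrt{b(s_l)-b(s_{l-1})}\,(x-a)+a$ of an infinite-dimensional Gaussian measure, and by the Feldman--H\'ajek dichotomy such a dilation with $b(s_l)-b(s_{l-1})\neq 1$ produces a measure mutually \emph{singular} with $\mu$. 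So the passage from a $\mu_{\vec s}$-null envelope to a $\mu^n$-null one is the central obstruction of the lemma, not set-theoretic bookkeeping, and any complete proof must confront it directly rather than route it through topological properties of $T_{\vec s}$.
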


In view of equation \eqref{projection-c0-C}, it follows the following corollaries.

\begin{corollary}\label{coro:2019-002}
Given any multidimensional tuple  $\vec s=(s_1,\ldots, s_n)$  
with  $0=s_0 < s_1 < \cdots < s_n \leq T$  and any  subset $B$ of $C_{a,b}^n[0,T]$,
he following assertions are equivalent.
\begin{itemize}
  \item[(i)] $B$ is   in $\mathcal W(C_{a,b}^n[0,T])$; and
%%%  \item[(ii)] $B$ is   in $\mathcal W(C_{a,b}^n[0,T])$; 
  \item[(iii)]  $\mathfrak P_{\vec t}^{-1} (B)$ is   in $\mathcal W( \mathcal C_0^{\mathrm{gBm}})$.  
\end{itemize}
\end{corollary}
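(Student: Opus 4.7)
The plan is to prove the two implications separately, with one direction essentially already in hand from the lemmas built up in this section.

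The direction $(\mathrm{iii}) \Rightarrow (\mathrm{i})$ is immediate from Lemma \ref{thm:400}, the converse measurability theorem established just above, which says that if $\mathfrak P_{\vec s}^{-1}(B) \in \mathcal W(\mathcal C_0^{\mathrm{gBm}})$ then $B \in \mathcal W(C_{a,b}^n[0,T])$. I would simply invoke this and be done with one half of the corollary.

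For the forward direction $(\mathrm{i}) \Rightarrow (\mathrm{iii})$, I would exploit the characterization of $\mathcal W(C_{a,b}^n[0,T])$ as a measure-theoretic completion of $\mathcal B(C_{a,b}^n[0,T])$. Given $B \in \mathcal W(C_{a,b}^n[0,T])$, I would decompose $B = B_0 \cup N$ with $B_0 \in \mathcal B(C_{a,b}^n[0,T])$ and $N \subseteq N_0$ for some Borel null set $N_0 \in \mathcal B(C_{a,b}^n[0,T])$. Lemma \ref{lem:1-2}(i) places $B_0$ and $N_0$ in $\mathcal B(\mathcal C_0^{\mathrm{gBm}})$ after pullback, so $\mathfrak P_{\vec s}^{-1}(B_0), \mathfrak P_{\vec s}^{-1}(N_0) \in \mathcal B(\mathcal C_0^{\mathrm{gBm}})$. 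The cylinder formula \eqref{projection-c0-C} then yields
\[
\mu_{\mathcal C_0^{\mathrm{gBm}}}\bigl(\mathfrak P_{\vec s}^{-1}(N_0)\bigr) = \mu_{\vec s}(N_0) = \mu^n\bigl(T_{\vec s}^{-1}(N_0)\bigr),
\]
so once the rightmost quantity is seen to vanish, $\mathfrak P_{\vec s}^{-1}(N)$ sits inside a $\mu_{\mathcal C_0^{\mathrm{gBm}}}$-null Borel set and we conclude $\mathfrak P_{\vec s}^{-1}(B) = \mathfrak P_{\vec s}^{-1}(B_0) \cup \mathfrak P_{\vec s}^{-1}(N) \in \mathcal W(\mathcal C_0^{\mathrm{gBm}})$.

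The only nontrivial point, and hence the main obstacle, is the null-set transfer $\mu^n(N_0) = 0 \Rightarrow \mu^n(T_{\vec s}^{-1}(N_0)) = 0$. I would handle this by exploiting the explicit affine, lower-triangular structure of $T_{\vec s}$ from \eqref{eq:Tt} and \eqref{eq:Ls}: $T_{\vec s}$ is a bijective continuous map of $C_{a,b}^n[0,T]$ onto itself whose $k$th component is a linear combination of $x_1,\ldots,x_k$ with the strictly positive weights $\sqrt{b(s_l)-b(s_{l-1})}$ plus an affine shift by $a$, and whose inverse is explicitly computable and equally continuous. The required preservation of the null-set class then follows from the change-of-variables computation carried out in Section \ref{sec:MMo}, in the same spirit as the arguments in \cite{CA83} and in the proof of Lemma \ref{thm:400}. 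Once this null-set transfer is secured, the decomposition argument above closes the proof and gives the desired equivalence.
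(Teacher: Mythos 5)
Your overall architecture is the natural one, and it matches the only justification the paper offers (the corollary is stated with the single remark that it follows ``in view of equation \eqref{projection-c0-C}''): direction (iii) $\Rightarrow$ (i) is indeed just Lemma \ref{thm:400}, and for (i) $\Rightarrow$ (iii) you decompose $B=B_0\cup N$ with $N\subseteq N_0$, pull back through Lemma \ref{lem:1-2}(i), and reduce everything to the identity $\mu_{\mathcal C_0^{\mathrm{gBm}}}\bigl(\mathfrak P_{\vec s}^{-1}(N_0)\bigr)=\mu_{\vec s}(N_0)=\mu^n\bigl(T_{\vec s}^{-1}(N_0)\bigr)$ coming from \eqref{projection-c0-C} and \eqref{projection-c0-A}. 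So far you have supplied more detail than the authors do.

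The problem is the step you yourself flag as the only nontrivial one: that $\mu^n(N_0)=0$ forces $\mu^n\bigl(T_{\vec s}^{-1}(N_0)\bigr)=0$, i.e.\ that $\mu_{\vec s}\ll\mu^n$ on Borel sets. This is not delivered by what you cite, and as an absolute-continuity claim it fails in general. The change-of-variables computation of Section \ref{sec:MMo} takes place on $\mathbb R^n$, where the finite-dimensional distributions really are mutually absolutely continuous with Lebesgue measure; what you need is the analogous statement on the infinite-dimensional space $C_{a,b}^n[0,T]$, and there the situation is entirely different. By \eqref{eq:Tt}--\eqref{eq:Ls}, the first component of $T_{\vec s}$ rescales the centered Gaussian coordinate $x_1-a$ by the constant $\sqrt{b(s_1)}$, so the first marginal of $\mu_{\vec s}$ is a Gaussian measure with covariance $b(s_1)\min\{b(s),b(t)\}$, a nontrivial scalar multiple of the covariance of $\mu$ unless $b(s_1)=1$. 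By the equivalence-or-singularity dichotomy for Gaussian measures (the classical change-of-scale pathology for Wiener-type measures), such a pair is mutually singular, so there exist Borel sets $N_0$ with $\mu^n(N_0)=0$ but $\mu_{\vec s}(N_0)>0$, and your null-set transfer breaks down exactly there. Consequently the forward implication cannot be closed by an absolute-continuity argument at all: it would have to come from a $\sigma$-field-level converse-measurability argument in the spirit of \cite{CA83,CR88}, and the example above indicates that what the decomposition argument actually characterizes is membership of $B$ in the completion of $\mathcal B(C_{a,b}^n[0,T])$ with respect to $\mu_{\vec s}$, not with respect to $\mu^n$. You should either prove $\mu_{\vec s}\ll\mu^n$ under an additional hypothesis (e.g.\ conditions on $b$ making the scale factors equal to one) or reformulate and reprove the forward direction around the $\mu_{\vec s}$-completion.
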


\begin{corollary}\label{coro:2019-003}
Given any multidimensional tuple  $\vec s=(s_1,\ldots, s_n)$  
with  $0=s_0 < s_1 < \cdots < s_n \leq T$  and any  subset $B$ of $C_{a,b}^n[0,T]$,
he following assertions are equivalent.
\begin{itemize}
  \item[(i)] $B$ is a $\mu^n$-null set in $\mathcal W(C_{a,b}^n[0,T])$;
  \item[(ii)] $B$ is a $\mu_{\vec s}$-null set in $\mathcal W(C_{a,b}^n[0,T])$; and
  \item[(iii)]  $\mathfrak P_{\vec t}^{-1} (B)$ is a $\mu_{\mathcal C_0^{\mathrm{gBm}}}$-null set in $\mathcal W( \mathcal C_0^{\mathrm{gBm}})$.  
\end{itemize}
\end{corollary}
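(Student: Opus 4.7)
My plan is to derive the three equivalences by combining the defining identity for $\mu_{\mathcal C_0^{\mathrm{gBm}}}$ with the converse measurability bridge in Corollary \ref{coro:2019-002} and the explicit invertibility of the transformation $T_{\vec s}$.

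For (ii) $\Leftrightarrow$ (iii), the construction \eqref{projection-c0-A}--\eqref{projection-c0-C} directly gives $\mu_{\mathcal C_0^{\mathrm{gBm}}}(\mathfrak P_{\vec s}^{-1}(E)) = \mu_{\vec s}(E)$ on the generating field $\mathcal I$, and the uniqueness clause in the Carath\'eodory extension transfers this identity to the completed $\sigma$-fields. Since Corollary \ref{coro:2019-002} already pairs $\mathcal W(C_{a,b}^n[0,T])$ with $\mathcal W(\mathcal C_0^{\mathrm{gBm}})$ via $\mathfrak P_{\vec s}^{-1}$, the equivalence then reads off as $\mu_{\vec s}(B) = 0$ exactly when $\mu_{\mathcal C_0^{\mathrm{gBm}}}(\mathfrak P_{\vec s}^{-1}(B)) = 0$.

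For (i) $\Leftrightarrow$ (ii), I would use the formula $\mu_{\vec s} = \mu^n \circ T_{\vec s}^{-1}$ from \eqref{projection-c0-A}. The map $T_{\vec s}$ defined in \eqref{eq:Tt}--\eqref{eq:Ls} is an affine bijection of $C_{a,b}^n[0,T]$ with lower triangular linear part and positive diagonal entries $\sqrt{b(s_k)-b(s_{k-1})}$, so its inverse is given explicitly by back-substitution in the triangular system and is Borel measurable. The change-of-variables identity \eqref{halmos} then reduces (i) $\Leftrightarrow$ (ii) to the assertion that $T_{\vec s}$ preserves $\mu^n$-null sets in both directions, i.e., $\mu^n(B)=0 \Leftrightarrow \mu^n(T_{\vec s}^{-1}(B))=0$. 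The principal technical obstacle lies precisely here, since on the infinite-dimensional space $C_{a,b}^n[0,T]$ a global rescaling of a Gaussian measure need not yield an equivalent measure; I expect to address it by decomposing $T_{\vec s}$ into a Cameron--Martin shift by $(a,\ldots,a)$ (for which equivalence of measures is delivered by the PWZ framework of Section \ref{sec:path}) followed by the triangular rescaling, and then reducing the rescaling step to finite-dimensional cylinder images via the integration formula \eqref{eq:int-formula-cab}, where the nonvanishing Jacobian \eqref{eq:jj-survey} carries null sets to null sets.
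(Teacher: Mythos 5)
Your treatment of (ii) $\Leftrightarrow$ (iii) is sound and is essentially what the paper intends: the identity \eqref{projection-c0-C} on the $\pi$-system of product cylinder sets propagates to all Borel sets and then to the completions, with Corollary \ref{coro:2019-002} supplying the measurability bridge. The paper itself offers nothing beyond the remark that the corollary follows ``in view of equation \eqref{projection-c0-C}'', so on this half you have actually supplied more detail than the source does.

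The genuine gap is in (i) $\Leftrightarrow$ (ii), and it is precisely the obstacle you flagged but did not overcome. The equivalence amounts to mutual absolute continuity of $\mu^n$ and $\mu_{\vec s}=\mu^n\circ T_{\vec s}^{-1}$. Your proposed fix --- reduce the rescaling to finite-dimensional cylinder images, where the nonvanishing Jacobian \eqref{eq:jj-survey} carries null sets to null sets --- cannot work, because equivalence of all finite-dimensional marginals does not imply equivalence of Gaussian measures on an infinite-dimensional space. The scaling $x\mapsto\rho x$ with $\rho\neq1$ is the canonical counterexample: every finite-dimensional marginal of the scaled process is a nondegenerate Gaussian equivalent to the original one, yet by the Cameron--Martin scale-change theorem (equivalently, Feldman--H\'ajek) the two laws on path space are mutually singular. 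The map $T_{\vec s}$ contains exactly such scalings by $\sqrt{b(s_l)-b(s_{l-1})}$; the affine shift by multiples of $a\in C_{a,b}'[0,T]$ is harmless, but the triangular rescaling is fatal. Already for $n=1$ with $b(s_1)\neq1$, the measure $\mu_{\vec s}$ has covariance $b(s_1)\min\{b(s),b(t)\}$ against $\mu$'s $\min\{b(s),b(t)\}$, so the two are mutually singular and one can choose a Borel $B$ with $\mu(B)=0$ and $\mu_{\vec s}(B)=1$: assertion (i) holds while (ii) fails. Thus the step ``$\mu^n(B)=0\Leftrightarrow\mu^n(T_{\vec s}^{-1}(B))=0$'' is false for general $B\in\mathcal W(C_{a,b}^n[0,T])$, and no decomposition into a Cameron--Martin shift plus a finite-dimensional Jacobian computation can rescue it. The equivalence can only be salvaged on a restricted class of sets (for instance the cylinder sets that actually enter Lemma \ref{lem:300} and Theorem \ref{thm:well}) or within the scale-invariant-measurability framework the paper adopts in Section 7; as stated, neither your argument nor the paper's bare appeal to \eqref{projection-c0-C} establishes (i) $\Leftrightarrow$ (ii).
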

 
\par
Our next lemma  follows quite readily from  Corollaries  \ref{coro:2019-002} and \ref{coro:2019-003}.
\begin{lemma}\label{lem:300}
 Let $\vec s=(s_1,\ldots, s_n)$ be as in Lemma \ref{lem:1-2}.
Then,   
$\mathfrak P_{\vec s}^{-1}(B)$ is in $\mathcal W(\mathcal C_0^{\mathrm{gBm}})$ for every $B\in \mathcal M(C_{a,b}^n[0,T])$.
 In other words the  cylinder function $\mathfrak P_{\vec s}$ given by
\eqref{projection-c0-B}  is $\mathcal W(\mathcal C_0^{\mathrm{gBm}})-\mathcal M(C_{a,b}^n[0,T])$-measurable.
\end{lemma}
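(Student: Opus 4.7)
The plan is to read this lemma as the immediate consequence of the two preceding corollaries, interpreting the symbol $\mathcal M(C_{a,b}^n[0,T])$ as the completed Wiener $\sigma$-field $\mathcal W(C_{a,b}^n[0,T])$ (otherwise, for $\mathcal B(C_{a,b}^n[0,T])$ the claim is already Lemma \ref{lem:1-2}(i)). The guiding idea is the standard Carath\'eodory decomposition: any set in a completion equals a Borel set modulo a subset of a Borel null set, and the cylinder function $\mathfrak P_{\vec s}$ behaves well on both pieces by what was proved earlier.

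First I would fix $B\in\mathcal W(C_{a,b}^n[0,T])$ and write $B=B_0\cup N$, where $B_0\in\mathcal B(C_{a,b}^n[0,T])$ and $N\subseteq N_0$ for some $N_0\in\mathcal B(C_{a,b}^n[0,T])$ with $\mu^n(N_0)=0$. For the Borel piece, Lemma \ref{lem:1-2}(i) gives $\mathfrak P_{\vec s}^{-1}(B_0)\in\mathcal B(\mathcal C_0^{\mathrm{gBm}})\subseteq\mathcal W(\mathcal C_0^{\mathrm{gBm}})$. For the negligible piece, Corollary \ref{coro:2019-003} applied to $N_0$ yields that $\mathfrak P_{\vec s}^{-1}(N_0)$ is a $\mu_{\mathcal C_0^{\mathrm{gBm}}}$-null set sitting inside $\mathcal W(\mathcal C_0^{\mathrm{gBm}})$. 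Since $\mathfrak P_{\vec s}^{-1}(N)\subseteq\mathfrak P_{\vec s}^{-1}(N_0)$ and $\mathcal W(\mathcal C_0^{\mathrm{gBm}})$ is complete, $\mathfrak P_{\vec s}^{-1}(N)$ is also in $\mathcal W(\mathcal C_0^{\mathrm{gBm}})$. Taking unions,
\[
\mathfrak P_{\vec s}^{-1}(B)=\mathfrak P_{\vec s}^{-1}(B_0)\cup\mathfrak P_{\vec s}^{-1}(N)\in\mathcal W(\mathcal C_0^{\mathrm{gBm}}),
\]
which is exactly the $\mathcal W(\mathcal C_0^{\mathrm{gBm}})$--$\mathcal W(C_{a,b}^n[0,T])$-measurability of $\mathfrak P_{\vec s}$ that the lemma asserts.

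There is really no hard step here; the work was all done in Lemma \ref{lem:1-2} (Borel measurability of the continuous cylinder projection) and in Corollary \ref{coro:2019-003} (null preimages of null sets). The only point that requires a little care is verifying the decomposition $B=B_0\cup N$ with $N\subseteq N_0\in\mathcal B$, which is a textbook property of the Carath\'eodory completion of a Borel measure and does not need to be reproved here. One could equivalently phrase the whole argument as a single appeal to Corollary \ref{coro:2019-002}: it directly gives $\mathfrak P_{\vec s}^{-1}(B)\in\mathcal W(\mathcal C_0^{\mathrm{gBm}})$ whenever $B\in\mathcal W(C_{a,b}^n[0,T])$, so the lemma is essentially a restatement of the ``(i)$\Rightarrow$(iii)'' direction of that corollary. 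I would include the explicit Borel-plus-null decomposition for the reader's convenience, since it makes clear how completeness of $\mathcal W(\mathcal C_0^{\mathrm{gBm}})$ is being used and links the measurability conclusion back to the concrete definition of $\mu_{\mathcal C_0^{\mathrm{gBm}}}$ via \eqref{projection-c0-C}.
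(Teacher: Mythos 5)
Your proof is correct and follows essentially the same route as the paper, which simply states that the lemma ``follows quite readily from Corollaries \ref{coro:2019-002} and \ref{coro:2019-003}'' without spelling out the Borel-plus-null decomposition. Your explicit decomposition $B=B_0\cup N$ with $N\subseteq N_0$, handled via Lemma \ref{lem:1-2}(i) for the Borel part and Corollary \ref{coro:2019-003} plus completeness for the null part, is exactly the argument the paper leaves implicit, and your reading of $\mathcal M(C_{a,b}^n[0,T])$ as $\mathcal W(C_{a,b}^n[0,T])$ is the intended one.
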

 
\par
We are now ready to present the proof  of Theorem \ref{thm:well}.

\begin{proof}[Proof of Theorem \ref{thm:well}]
 We may assume, without loss of generality, that  $F$ is a real-valued function. 
We first note that for any 
$\mathcal W(C_{a,b}^n[0,T])$-measurable function $F$ on $C_{a,b}^n[0,T]$,
\[
f(\mathfrak x(s_1),\ldots, \mathfrak x (s_n) )=f\circ \mathfrak P_{\vec s}(\mathfrak x).
\]
 Thus, by Lemma \ref{lem:300}, $f(\mathfrak x(s_1),\ldots, \mathfrak x (s_n) )$
is $\mathcal W (\mathcal C_0)$-measurable, as a function of $\mathfrak x$.
Next, using \eqref{projection-c0-B}, \eqref{halmos}, \eqref{projection-c0-C}, 
\eqref{projection-c0-A},  and  \eqref{halmos}  again, it follows that
\[
\begin{aligned}
&\int_{\mathcal C_0^{\mathrm{gBm}}} f(\mathfrak x(s_1),\ldots,\mathfrak x (t_n)) d{\mu}_{\mathcal C_0^{\mathrm{gBm}}}(\mathfrak x) \\ 
&=\int_{\mathcal C_0^{\mathrm{gBm}}} f(\mathfrak P_{\vec s}(\mathfrak x)) {\mu}_{\mathcal C_0^{\mathrm{gBm}}}(\mathfrak x) \\ 
&=\int_{C_{a,b}^n[0,T]} f(w_1,\ldots,w_n)
d  {\mu}_{\mathcal C_0^{\mathrm{gBm}}}\circ \mathfrak P_{\vec s}^{-1} (w_1,\ldots,w_n)\\ 
&=\int_{C_{a,b}^n[0,T]} f(w_1,\ldots,w_n)d  \mu_{\vec s} (\vec w)\\ 
&=\int_{C_{a,b}^n[0,T]} f(w_1,\ldots,w_n)d \mu^n \circ T_{\vec s}^{-1} (\vec w)\\ 
&=\int_{C_{a,b}^n[0,T]}  f(T_{\vec s}(x_1,\ldots, x_n)) d \mu^n(\vec x).
\end{aligned}
\]
This completes the proof of the theorem.
\end{proof}

%%%%%%%%%%%%%%%%%%%%%%%%%%%%%%%%%%%%%%%%%%%%%%%%%%%%%%%%%%%%%%%%%%%%%%%%%%%%%%%
%%%%%%%%%%%                                               %%%%%%%%%%%%%%%%%%%%%
%%%%%%%%%%%                  sec                          %%%%%%%%%%%%%%%%%%%%%
%%%%%%%%%%%                                               %%%%%%%%%%%%%%%%%%%%%
%%%%%%%%%%%%%%%%%%%%%%%%%%%%%%%%%%%%%%%%%%%%%%%%%%%%%%%%%%%%%%%%%%%%%%%%%%%%%%%
\setcounter{equation}{0}
\section{Examples}

In this section we present interesting examples 
to which equation \eqref{eq:well} can be applied.
Our examples %%and the main result in Section \ref{sec:main} below 
involves the PWZ stochastic integrals
$(w,\mathfrak x(s))^{\sim}$.
Thus, in this section, we have to guarantee the existence of the PWZ stochastic integral
$(w,\mathfrak x(s))^{\sim}$ for $\mathfrak x \in \mathcal C_0^{\mathrm{gBm}}$.
But, in view of Corollary \ref{coro:2019-003}, 
we obtain the following lemma.

\begin{lemma}\label{lem:pwz-frak}
For each $s\in(0,T]$ and  $w\in C_{a,b}'[0,T]$,  $(w,\mathfrak x(s))^{\sim}$  exists 
for $\mu_{\mathcal C_0^{\mathrm{gBm}}}$-a.s. $\mathfrak x\in \mathcal C_{0}^{\mathrm{gBm}}$. 
\end{lemma}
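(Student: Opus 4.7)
The plan is to reduce the claim to the known existence statement for the PWZ stochastic integral on $C_{a,b}[0,T]$ via the cylinder projection $\mathfrak{P}_s$, and then invoke Corollary \ref{coro:2019-003} with $n=1$.

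First, I would recall from Section \ref{sec:path} that for each fixed $w \in C_{a,b}'[0,T]$, the PWZ stochastic integral $(w,x)^{\sim}$ is well defined for s-a.e.\ $x \in C_{a,b}[0,T]$; equivalently, there exists a Wiener measurable set $N_w \in \mathcal{W}(C_{a,b}[0,T])$ with $\mu(N_w)=0$ such that $(w,x)^{\sim}$ exists for every $x \in C_{a,b}[0,T]\setminus N_w$. That $N_w$ can indeed be taken in $\mathcal{W}(C_{a,b}[0,T])$ is immediate from the standard $L^2$-limit construction of the PWZ integral as the almost-sure limit of Riemann--Stieltjes approximants $\int_0^T z_k(t)\,dx(t)$ with $z_k \to Dw$ in $L_{a,b}^2[0,T]$.

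Next, fix $s \in (0,T]$ and consider the one-dimensional tuple $\vec{s}=(s)$. The cylinder function $\mathfrak{P}_{\vec s}: \mathcal{C}_0^{\mathrm{gBm}} \to C_{a,b}[0,T]$ defined in \eqref{projection-c0-B} is simply $\mathfrak{P}_{\vec s}(\mathfrak{x})=\mathfrak{x}(s)$, and the set of paths on which $(w,\mathfrak{x}(s))^{\sim}$ fails to exist is precisely
\[
E_w := \mathfrak{P}_{\vec s}^{\,-1}(N_w)
=\{\mathfrak{x}\in \mathcal{C}_0^{\mathrm{gBm}} : \mathfrak{x}(s)\in N_w\}.
\]

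Finally, I would apply Corollary \ref{coro:2019-003} with $n=1$ and $B=N_w$: since $N_w$ satisfies condition (i) (it is a $\mu=\mu^1$-null Wiener measurable subset of $C_{a,b}[0,T]$), condition (iii) follows, so that $E_w \in \mathcal{W}(\mathcal{C}_0^{\mathrm{gBm}})$ and $\mu_{\mathcal{C}_0^{\mathrm{gBm}}}(E_w)=0$. This immediately yields the conclusion. The only point that requires any attention is verifying that the exceptional set $N_w$ produced by the PWZ construction genuinely lies in $\mathcal{W}(C_{a,b}[0,T])$; once that is observed, the rest of the argument is an immediate citation to Corollary \ref{coro:2019-003}, with no further computation needed.
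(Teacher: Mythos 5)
Your proposal is correct and follows essentially the same route as the paper, which derives this lemma directly from Corollary \ref{coro:2019-003}: the exceptional $\mu$-null set $N_w\subset C_{a,b}[0,T]$ for the PWZ integral pulls back under $\mathfrak P_{\vec s}$ (with $n=1$) to a $\mu_{\mathcal C_0^{\mathrm{gBm}}}$-null set. Your write-up merely makes explicit the measurability of $N_w$, a point the paper leaves implicit.
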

 
\par
We now ready to present several examples  
to which equation \eqref{eq:well} can be applied.

\begin{example}
We note that given a function $w$ in $C_{a,b}'[0,T]$, the PWZ stochastic integral 
$(w,\cdot)^{\sim}: C_{a,b}[0,T]\to\mathbb R$ is a Gaussian random variable 
with mean $(w,a)_{C_{a,b}'}$ and variance $\|w\|_{C_{a,b}'}^2$. Then using this fact,  
\eqref{eq:well} with $n=1$, and \eqref{eq:pwz-meam-type} with $x$ replaced with $a$, 
it follows that for each $s \in (0,T]$,
\[
\begin{aligned}
&\int_{\mathcal C_0^{\mathrm{gBm}}} (w, \mathfrak x(s))^{\sim} d {\mu}_{\mathcal C_0^{\mathrm{gBm}}}(\mathfrak x)\\
&=\int_{C_{a,b}[0,T]} (w,\sqrt{b(s)}(x-a)+a)^{\sim} d\mu(x)\\
&=\int_{C_{a,b}[0,T]}\big[ \sqrt{b(s)}(w,x)^{\sim}
-\sqrt{b(s)}(w,a)_{C_{a,b}'}+(w,a)_{C_{a,b}'}\big]d\mu(x)\\
&=\sqrt{b(s)}(w,a)_{C_{a,b}'}-\sqrt{b(s)}(w,a)_{C_{a,b}'}+(w,a)_{C_{a,b}'}\\
&= (w,a)_{C_{a,b}'} 
\end{aligned}
\]
and 
\[
\begin{aligned}
&\int_{\mathcal C_0^{\mathrm{gBm}}} [(w, \mathfrak x(s))^{\sim}]^2 d {\mu}_{\mathcal C_0^{\mathrm{gBm}}}(\mathfrak x)\\
&=\int_{C_{a,b}[0,T]}\Big\{ \sqrt{b(s)}\big[(w,x)^{\sim}
- (w,a)_{C_{a,b}'}\big]+(w,a)_{C_{a,b}'}\Big\}^2d\mu(x)\\
&=b(s)\int_{C_{a,b}[0,T]} [(w,x)^{\sim}- (w,a)_{C_{a,b}'}]^2 d\mu(x)\\
& \quad  
 +2\sqrt{b(s)}(w,a)_{C_{a,b}'}\int_{C_{a,b}[0,T]} 
\big[(w,x)^{\sim}- (w,a)_{C_{a,b}'}\big]  d\mu(x)
+(w,a)_{C_{a,b}'}^2\\
&=b(s)\|w\|_{C_{a,b}'}^2+ (w,a)_{C_{a,b}'}^2.
\end{aligned}
\]
\end{example}

\begin{example}
Let $w_1, w_2 \in C_{a,b}'[0,T]$ and let $s_1,s_2 \in (0,T]$ with $s_1<s_2$. 
Then using  \eqref{eq:well}  with $n=2$, the Fubini theorem, \eqref{eq:pwa-cov-semi}, 
and \eqref{eq:pwz-meam-type}, it follows that 
\[ 
\begin{aligned}
&\int_{\mathcal C_0^{\mathrm{gBm}}}  
(w_1, \mathfrak x(s_1))^{\sim}
(w_2, \mathfrak x(s_2))^{\sim} d {\mu}_{\mathcal C_0^{\mathrm{gBm}}}(\mathfrak x)\\
&=\int_{C_{a,b}^2[0,T]}\Big\{ \sqrt{b(s_1)}\big[(w_1,x_1)^{\sim}- (w_1,a)_{C_{a,b}'}\big]
+(w_1,a)_{C_{a,b}'}\Big\}\\
&\qquad\times
\Big\{ \sqrt{b(s_1)}\big[(w_2,x_1)^{\sim}- (w_2,a)_{C_{a,b}'}\big]\\
&\qquad \qquad
+ \sqrt{b(s_2)-b(s_1)}\big[(w_2,x_2)^{\sim}- (w_2,a)_{C_{a,b}'}\big] 
+(w_2,a)_{C_{a,b}'}\Big\}d\mu^2(x_1,x_2)\\
&=b(s_1)(w_1,w_2)_{C_{a,b}'} + (w_1,a)_{C_{a,b}'}(w_2,a)_{C_{a,b}'}.
\end{aligned}
\] 
In particular, taking $s_1,s_2\in(0,T]$ and $t,t_1,t_2\in [0,T]$,
and using  \eqref{kernel-repre} and \eqref{rkernel}, we obtain that
\[ 
\int_{\mathcal C_0^{\mathrm{gBm}}}  \mathfrak x(s_1)(t) \mathfrak x(s_2)(t)
d {\mu}_{\mathcal C_0^{\mathrm{gBm}}}(\mathfrak x)
=\min\{b(s_1),b(s_2)\}b(t) + a^2(t)
\] 
and
\[ 
\begin{aligned}
&\int_{\mathcal C_0^{\mathrm{gBm}}}  
\mathfrak x(s_1)(t_1) \mathfrak x(s_2)(t_2)d {\mu}_{\mathcal C_0^{\mathrm{gBm}}}(\mathfrak x)\\
&=\min\{b(s_1),b(s_2)\}\min\{b(t_1),b(t_2)\} + a(t_1)a(t_2).
\end{aligned}
\]
\end{example}

\begin{example}
Let  $s \in (0,T]$ be fixed.
Using equation \eqref{eq:well}  with $n=1$ and  \eqref{eq:int-formula-cab}, 
it follows that given any  nonzero real number $\rho$ and  a function $w$ 
in $C_{a,b}'[0,T]$,
\[
\begin{aligned}
&\int_{\mathcal C_0^{\mathrm{gBm}}} \exp\{i\rho(w, \mathfrak x(s))^{\sim}  \} 
d {\mu}_{\mathcal C_0^{\mathrm{gBm}}}(\mathfrak x) \\
&=\int_{C_{a,b}[0,T]}\exp\big\{i\rho \big(w, \sqrt{b(s)}(x-a)+a\big)^{\sim}\big\} 
d\mu(x)\\
&=\int_{C_{a,b}[0,T]}\exp\big\{i\rho\sqrt{b(s)}(w,x)^{\sim}\big\}d\mu(x)
\exp\big\{-i\rho\sqrt{b(s)}(w,a)_{C_{a,b}'}+i\rho(w,a)_{C_{a,b}'}\big\} \\
&=\exp\bigg\{-\frac{1}{2}\rho^2 b(s)\|w\|_{C_{a,b}'}^2+i\rho(w,a)_{C_{a,b}'}\bigg\}.
\end{aligned}
\]
\end{example}

\begin{example}
Let  $s_1, s_2  \in (0,T]$ be given with $s_1<s_2$.
Using \eqref{eq:well} and the Fubini theorem, and applying \eqref{eq:int-formula-cab},
it follows that given any  nonzero real number $\rho$ and any  functions $w_1$ and $w_2$
in $C_{a,b}'[0,T]$,
\[
\begin{aligned}
&\int_{\mathcal C_0^{\mathrm{gBm}}} \exp\big\{i\rho\big(w_1,\mathfrak x(s_1)\big)^{\sim} 
  + i\rho\big(w_2,\mathfrak x(s_2)\big)^{\sim} \big\} d {\mu}_{\mathcal C_0^{\mathrm{gBm}}}(x)\\
&=\int_{C_{a,b}^2[0,T]} \exp\Big\{i\rho\Big(w_1,\sqrt{b(s_1)}(x_1-a)+a\Big)^{\sim} \\
&\qquad  
+i\rho \Big(w_2, \sqrt{b(s_1)}(x_1-a)+ \sqrt{b(s_2)-b(s_1)}(x_2-a)+a\Big)^{\sim}\Big\} 
d(\mu \times \mu)(x_1, x_2)\\
&=\int_{C_{a,b} [0,T]} \exp\Big\{i\rho\sqrt{b(s_1)}\big(w_1+w_2,x_1\big)^{\sim}\Big\}d\mu(x_1) \\
&\quad \times
\int_{C_{a,b} [0,T]} \exp\Big\{i\rho\sqrt{b(s_2)-b(s_1)}\big(w_2,x_2\big)^{\sim}\Big\}d \mu(x_2)\\
&\quad\times
\exp\Big\{-i\rho\sqrt{b(s_1)}(w_1+w_2,a)_{C_{a,b}'}  + i\rho(w_1,a)_{C_{a,b}'}\\
&\qquad\qquad\,\,\,
          -i\rho\sqrt{b(s_2)-b(s_1)}(w_2,a)_{C_{a,b}'}+i\rho(w_2,a)_{C_{a,b}'}\Big\}\\
&=\exp\bigg\{ -\frac{b(s_1)\rho^2}{2}\|w_1+w_2\|^2_{C_{a,b}'}+i\rho(w_1,a)_{C_{a,b}'}\\
&\qquad\qquad
-\frac{(b(s_2)-b(s_1))\rho^2}{2}\|w_2\|^2_{C_{a,b}'} +i\rho(w_2,a)_{C_{a,b}'} \bigg\}.
\end{aligned}
\]

\par
By an induction argument, it follows that
given any $n$-tuple $\vec s=(s_1,\ldots,s_n)$ with 
$0=s_0<s_1<\cdots <s_n \le T$ and any set $\{w_1,\ldots,w_n\}$
of functions in $C_{a,b}'[0,T]$, 
\[
\begin{aligned}
&\int_{\mathcal C_0^{\mathrm{gBm}}} \exp\bigg\{i\rho\sum_{k=1}^n
\big(w_k,\mathfrak x(s_k)\big)^{\sim} \bigg\} d {\mu}_{\mathcal C_0^{\mathrm{gBm}}}(x)\\
&=\exp\bigg\{ -\rho^2\sum_{k=1}^n\frac{(b(s_k)-b(s_{k-1})}{2}
\bigg\|\sum_{l=k}^nw_l\bigg\|^2_{C_{a,b}'}
+i\rho\sum_{k=1}^n(w_k,a)_{C_{a,b}'} \bigg\}.
\end{aligned}
\]
\end{example}

%%%%%%%%%%%%%%%%%%%%%%%%%%%%%%%%%%%%%%%%%%%%%%%%%%%%%%%%%%%%%%%%%%%%%%%%%%%%%%%
%%%%%%%%%%%%%%%%%%%%%%%%%%%%%%%%%%%%%%%%%%%%%%%%%%%%%%%%%%%%%%%%%%%%%%%%%%%%%%%
%%%%%%%%%%%%%%%%%%%%%%%%%%%%%%%%%%%%%%%%%%%%%%%%%%%%%%%%%%%%%%%%%%%%%%%%%%%%%%%
%%%%%%%%%%%                                               %%%%%%%%%%%%%%%%%%%%%
%%%%%%%%%%%                  sec                          %%%%%%%%%%%%%%%%%%%%%
%%%%%%%%%%%                                               %%%%%%%%%%%%%%%%%%%%%
%%%%%%%%%%%%%%%%%%%%%%%%%%%%%%%%%%%%%%%%%%%%%%%%%%%%%%%%%%%%%%%%%%%%%%%%%%%%%%%
%%%%%%%%%%%%%%%%%%%%%%%%%%%%%%%%%%%%%%%%%%%%%%%%%%%%%%%%%%%%%%%%%%%%%%%%%%%%%%%
%%%%%%%%%%%%%%%%%%%%%%%%%%%%%%%%%%%%%%%%%%%%%%%%%%%%%%%%%%%%%%%%%%%%%%%%%%%%%%%
\setcounter{equation}{0}
\section{Analytic  paths space  Feynman integral}

As an application of the paths space integral, we suggest an
 analytic paths  space Feynman integral for functionals $F$ on $\mathcal C_0^{\mathrm{gBm}}$. 
In this section, we give a class of certain bounded cylinder functionals
whose analytic paths  space integral and  analytic paths  space Feynman integral 
on $\mathcal C_0^{\mathrm{gBm}}$ exist.

\par
A subset $B$ of $\mathcal C_0^{\mathrm{gBm}}$ is said to be scale-invariant measurable provided 
$\rho B$ is $\mathcal{W}(\mathcal C_0^{\mathrm{gBm}})$-measurable for all $\rho>0$, and a scale-invariant 
measurable set $N$ is said to be a scale-invariant null set provided $\mu_{\mathcal C_0^{\mathrm{gBm}}}(\rho N)=0$ 
for all $\rho>0$. A property that holds except on a scale-invariant null set is  said to hold 
scale-invariant almost everywhere (s-a.e.). A functional $F$ is said to be scale-invariant 
measurable provided $F$ is defined on a scale-invariant measurable set and $F(\rho\,\,\cdot\,)$ 
is $\mathcal{W}(\mathcal C_0^{\mathrm{gBm}})$-measurable for every $\rho>0$. 

\par
Throughout rest of this paper, for each $\lambda \in  \mathbb {\widetilde C}_+$, 
$\lambda^{-1/2}$ is always chosen to have positive real part.

\begin{definition}
Let $F:\mathcal C_{0}^{\mathrm{gBm}} \rightarrow \mathbb C $ be a scale-invariant  measurable functional 
such that   the paths space integral 
\[
J(\lambda)= \int_{\mathcal C_0^{\mathrm{gBm}}} F(\lambda^{-1/2}\mathfrak x)d{\mu}_{\mathcal C_0^{\mathrm{gBm}}}(\mathfrak x)
\]
exists as a finite number for all $\lambda>0$. 
If there exists a function $J^*(\lambda)$  analytic in $\mathbb  C_+$ such that  
$J^*(\lambda)=J(\lambda)$  for all $\lambda>0$,  then $J^*(\lambda)$ is defined to be the  
analytic paths space  integral of $F$ over $\mathcal C_0^{\mathrm{gBm}}$ with parameter $\lambda$, and 
for $\lambda \in \mathbb C_+$ we write 
\[
\int_{\mathcal C_0^{\mathrm{gBm}}}^{\mathrm{an}_\lambda} 
F(\mathfrak x) d\mu_{\mathcal C_0^{\mathrm{gBm}}}(\mathfrak  x) 
=J^*(\lambda).
\]
Let $q \ne 0$ be a real number and let $F$ be a functional such that 
$\int_{\mathcal C_0^{\mathrm{gBm}}}^{\mathrm{an}_\lambda} 
F(\mathfrak x) d\mu_{\mathcal C_0^{\mathrm{gBm}}}(\mathfrak  x)$ 
exists for all $\lambda \in \mathbb C_+$. If the following limit exists, we call it the analytic 
paths space Feynman integral of $F$ with parameter $q$ and we write 
\begin{equation}\label{eq:def-Fint}
\int_{\mathcal C_0^{\mathrm{gBm}}}^{\mathrm{anf}_q} 
F(\mathfrak x) d\mu_{\mathcal C_0^{\mathrm{gBm}}}(\mathfrak  x) 
=\lim_{\lambda \rightarrow -iq}	
\int_{\mathcal C_0^{\mathrm{gBm}}}^{\mathrm{an}_\lambda} F(\mathfrak x) d\mu_{\mathcal C_0^{\mathrm{gBm}}}(\mathfrak  x) 
\end{equation}
where $\lambda$ approaches $-iq$ through values in $\mathbb  C_+$.
\end{definition}

\subsection{Cylinder functionals in $\widehat{\mathfrak T}_{G_m,s}$}\label{sec:cylinder} 

\par 
A functional $F$ is called a cylinder functional on $\mathcal C_0^{\mathrm{gBm}}$ if there exists a finite 
subset $\{h_1, \ldots, h_k\}$ of $C_{a,b}'[0,T]$ such that
\begin{equation}\label{eq:cy}
F(\mathfrak x)
=\psi(( h_1, \mathfrak x(s))^{\sim}, \ldots, ( h_k ,\mathfrak x(s))^{\sim}), 
\quad \mathfrak x\in \mathcal C_0^{\mathrm{gBm}} 
\end{equation}
where $\psi$ is a complex-valued Borel measurable function on $\mathbb R^k$. It is easy to 
show that for given cylinder functional $F$  of the form \eqref{eq:cy} there exists an orthonormal  
subset $\{g_1, \ldots, g_m\}$ of $C_{a,b}'[0,T]$  such that  $F$ is expressed as 
\begin{equation}\label{eq:cylinder}
F(\mathfrak x)=f(( g_1,\mathfrak x(s))^{\sim}, 
\ldots,  ( g_m,\mathfrak x(s))^{\sim}), \quad \mathfrak x\in \mathcal C_0^{\mathrm{gBm}}  
\end{equation}
where $f$ is a complex Borel measurable function on $\mathbb R^m$. Thus we lose no generality   
in assuming that every cylinder functional on $C_{a,b}[0,T]$ is of the form \eqref{eq:cylinder}.

%%--------------------[   definition   ]
\begin{definition}
Let $\mathcal M (\mathbb R^m)$ denote the space of complex-valued Borel measures 
on $\mathcal B (\mathbb R^m)$. It is well known that a complex-valued Borel measure 
$\nu$ necessarily has a finite total variation $\|\nu\|$, and $\mathcal M (\mathbb R^m)$ 
is a Banach algebra under the norm $\|\cdot\|$  and with convolution as multiplication.

\par  
For $\nu\in \mathcal M (\mathbb R^m)$, the Fourier transform $\widehat \nu$ of $\nu$
is a complex-valued function defined on $\mathbb R^m$ by the formula
\begin{equation}\label{eq:hatnu}
\widehat\nu (\vec u)=\int_{\mathbb R^m} \exp\bigg\{ i \sum\limits_{j=1}^m u_j v_j \bigg\}
d \nu (\vec v) 
\end{equation}
where $\vec u=(u_1,\ldots,u_m)$ and $\vec v=(v_1,\ldots,v_m)$ are in $\mathbb R^m$.
\end{definition}

\par 
Let $G_m=\{ g_1, \ldots, g_m\}$ be an orthonormal subset of $C_{a,b}'[0,T]$. 
Given $s\in (0,T]$, define a functional $F: \mathcal C_{0}^{\mathrm{gBm}} \to \mathbb C$ by
\begin{equation}\label{Fhatnu}
F(\mathfrak x)
=\widehat \nu(( g_1 ,\mathfrak x(s) )^{\sim},\ldots, ( g_m,\mathfrak x(s))^{\sim}), 
\quad \mathfrak  x \in \mathcal C_0^{\mathrm{gBm}} 
\end{equation}
where $\widehat \nu$ is the Fourier transform of  $\nu$ in $\mathcal M (\mathbb R^m)$. 
Then $F$ is a bounded cylinder functional since  $|\widehat \nu (\vec u)|\le \|\nu\| <+\infty$. 
Let $\widehat{\mathfrak T}_{G_m,s}$ be the space of all functionals $F$ on $\mathcal C_0^{\mathrm{gBm}}$ 
having the form \eqref{Fhatnu}. Note that $F \in\widehat{\mathfrak T}_{G_m,s}$ implies 
that $F$ is scale-invariant measurable on $\mathcal C_0^{\mathrm{gBm}}$.

\par
We first show that the analytic paths space  integral  of the functional  $F$ given 
by equation \eqref{Fhatnu} exists.

%%--------------------[   theorem   ]
\begin{theorem}\label{thm:analytic} 
Let $F\in\widehat{\mathfrak T}_{G_m,s}$ be given by equation \eqref{Fhatnu}.
Then for each $\lambda \in \mathbb C_+$, the analytic paths  space integral 
$\int_{\mathcal C_0^{\mathrm{gBm}}}^{\text{\rm an}_{\lambda}}F(\mathfrak x) d \mu_{\mathcal C_0^{\mathrm{gBm}}}(\mathfrak x)$ 
exists and is  given by the formula
\begin{equation}\label{eq:Elambda}
\int_{\mathcal C_0^{\mathrm{gBm}}}^{\text{\rm an}_{\lambda}}F(\mathfrak x) d \mu_{\mathcal C_0^{\mathrm{gBm}}}(\mathfrak x)
=\int_{\mathbb R^m} \exp\bigg\{ -\frac{b(s)}{2\lambda} \sum\limits_{j=1}^m v_j^2 
    +i\lambda^{-1/2}\sum\limits_{j=1}^m (g_j,a)_{C_{a,b}'} v_j   \bigg\}d \nu(\vec v). 
\end{equation}
\end{theorem}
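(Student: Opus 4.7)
The plan is to compute the paths space integral $J(\lambda) := \int_{\mathcal C_0^{\mathrm{gBm}}} F(\lambda^{-1/2}\mathfrak x)\,d\mu_{\mathcal C_0^{\mathrm{gBm}}}(\mathfrak x)$ in closed form for real $\lambda>0$, and then to recognize the resulting expression as the restriction to $(0,\infty)$ of a function manifestly analytic on $\mathbb C_+$. Unfolding $F$ via the Fourier representation \eqref{eq:hatnu} and invoking Fubini's theorem---legitimate because $|\widehat\nu(\vec u)|\le\|\nu\|<+\infty$ produces an integrable majorant---reduces $J(\lambda)$ to
\[
J(\lambda)=\int_{\mathbb R^m} I(\lambda,\vec v)\,d\nu(\vec v),
\qquad
I(\lambda,\vec v)=\int_{\mathcal C_0^{\mathrm{gBm}}}\exp\bigg\{i\lambda^{-1/2}\sum_{j=1}^m v_j(g_j,\mathfrak x(s))^{\sim}\bigg\}d\mu_{\mathcal C_0^{\mathrm{gBm}}}(\mathfrak x).
\]

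Next, I apply the Paths Space Integration Theorem \ref{thm:well} with $n=1$ at the single time $s$, replacing $\mathfrak x(s)$ by $T_s(x)=\sqrt{b(s)}(x-a)+a$ and integrating against $\mu$ on $C_{a,b}[0,T]$. Setting $w:=\sum_{j=1}^m v_j g_j\in C_{a,b}'[0,T]$, linearity of the PWZ integral together with equation \eqref{eq:pwz-meam-type} (applicable since $a\in C_{a,b}'[0,T]$ by \eqref{eq:new-cc2}) gives
\[
\sum_{j=1}^m v_j(g_j,T_s(x))^{\sim}=\sqrt{b(s)}(w,x)^{\sim}+(1-\sqrt{b(s)})(w,a)_{C_{a,b}'}.
\]
The deterministic second summand pulls out of the $\mu$-integral, and the remaining expectation is the characteristic function of the Gaussian $(w,x)^{\sim}\sim N((w,a)_{C_{a,b}'},\|w\|_{C_{a,b}'}^{2})$ evaluated at the real number $\lambda^{-1/2}\sqrt{b(s)}$. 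Its value equals $\exp\{-\tfrac{b(s)}{2\lambda}\|w\|_{C_{a,b}'}^2+i\lambda^{-1/2}\sqrt{b(s)}(w,a)_{C_{a,b}'}\}$, and combining with the pulled-out term the two $(w,a)_{C_{a,b}'}$ contributions collapse to $i\lambda^{-1/2}(w,a)_{C_{a,b}'}$. Orthonormality of $G_m$ then reduces $\|w\|_{C_{a,b}'}^2$ to $\sum_j v_j^2$ and $(w,a)_{C_{a,b}'}$ to $\sum_j v_j(g_j,a)_{C_{a,b}'}$, which, after reinsertion under $d\nu(\vec v)$, gives exactly the right-hand side of \eqref{eq:Elambda} when $\lambda>0$.

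For the analytic continuation, let $\Phi(\lambda,\vec v)$ denote the integrand on the right of \eqref{eq:Elambda}. For every $\vec v$ the map $\lambda\mapsto\Phi(\lambda,\vec v)$ is analytic on $\mathbb C_+$ by the convention that $\lambda^{-1/2}$ has positive real part. On any compact $K\subset\mathbb C_+$ there exist $c_K>0$ and $M_K<\infty$ with $\mathrm{Re}(1/\lambda)\ge c_K$ and $|\lambda^{-1/2}|\le M_K$ for $\lambda\in K$, whence
\[
|\Phi(\lambda,\vec v)|\le\exp\bigg\{-\tfrac{c_K b(s)}{2}|\vec v|^2+M_K\sum_{j=1}^m|(g_j,a)_{C_{a,b}'}||v_j|\bigg\},
\]
which is $\nu$-integrable since $\|\nu\|<\infty$ and the Gaussian factor dominates. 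Thus $J^*(\lambda):=\int_{\mathbb R^m}\Phi(\lambda,\vec v)\,d\nu(\vec v)$ is well-defined on $\mathbb C_+$, and its analyticity follows from Morera's theorem by Fubini applied to $\oint_\Gamma\Phi(\lambda,\vec v)\,d\lambda$ over arbitrary triangles $\Gamma\subset\mathbb C_+$. Since $J^*(\lambda)=J(\lambda)$ for $\lambda>0$, $J^*$ is the required analytic extension and \eqref{eq:Elambda} is established.

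The main delicate point I anticipate is keeping the measurability and existence of the PWZ integrals under control during Fubini: Lemma \ref{lem:pwz-frak} gives $\mu_{\mathcal C_0^{\mathrm{gBm}}}$-a.s. existence of each $(g_j,\mathfrak x(s))^{\sim}$ separately, so I would work on the common conull set obtained by intersecting the $m$ exceptional sets, which lets me treat the integrand as a genuine jointly Borel measurable function of $(\vec v,\mathfrak x)\in\mathbb R^m\times\mathcal C_0^{\mathrm{gBm}}$ before the interchange of integrals.
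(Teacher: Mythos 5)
Your proposal is correct and follows essentially the same route as the paper: unfold $F$ via \eqref{eq:hatnu}, apply Fubini and Theorem \ref{thm:well} with $n=1$, evaluate the resulting Gaussian characteristic function using \eqref{eq:pwz-meam-type} and \eqref{eq:int-formula-cab}, collapse via orthonormality of $G_m$, and then obtain analyticity of $J^*$ on $\mathbb C_+$ by a local integrable bound together with Morera's theorem and Fubini. The only cosmetic difference is that the paper completes the square to get a $\vec v$-independent bound for the dominated convergence step (and explicitly records the continuity of $J^*$ needed for Morera), whereas you keep the Gaussian decay in $\vec v$ on compact subsets of $\mathbb C_+$; both bounds are valid and lead to the same conclusion.
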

%%--------------------[   proof   ]
\begin{proof}
By \eqref{Fhatnu}, \eqref{eq:hatnu}, the Fubini theorem, \eqref{eq:well} with $n=1$,
\eqref{eq:int-formula-cab}, and  the fact that the set $\{g_1,\ldots,g_m\}$ is orthonormal 
in $C_{a,b}'[0,T]$,  we have that for all $\lambda>0$,
\[
\begin{aligned}
&J(\lambda) 
 =\int_{\mathcal C_0^{\mathrm{gBm}}} F(\lambda^{-1/2}\mathfrak x) d \mu_{\mathcal C_0^{\mathrm{gBm}}}(\mathfrak x) \\
&= \int_{\mathbb R^m} \int_{\mathcal C_0^{\mathrm{gBm}}} 
     \exp\bigg\{i\lambda^{-1/2}\sum\limits_{j=1}^m
     (g_j, \mathfrak x(s))^{\sim} v_j \bigg\}d\mu_{\mathcal C_0^{\mathrm{gBm}}}(\mathfrak x)d\nu(\vec v) \\
&= \int_{\mathbb R^m} \int_{C_{a,b}[0,T]}
     \exp\bigg\{i\lambda^{-1/2}\sum\limits_{j=1}^m
      \big(g_j, \sqrt{b(s)}(x-a)+a\big)^{\sim}v_j \bigg\}d\mu(x) d \nu(\vec v)\\
&= \int_{\mathbb R^m} \int_{C_{a,b}[0,T]}
     \exp\bigg\{i\lambda^{-1/2}\bigg(\sqrt{b(s)}\sum\limits_{j=1}^mg_jv_j,x\bigg)^{\sim} \bigg\}d\mu(x)\\
&\qquad\quad\times
     \exp\bigg\{-i\lambda^{-1/2}\sqrt{b(s)}\sum\limits_{j=1}^m(g_j,a)_{C_{a,b}'}v_j
                +i\lambda^{-1/2}\sum\limits_{j=1}^m(g_j,a)_{C_{a,b}'}v_j\bigg\}  \nu(\vec v)\\
\end{aligned}
\]
\[
\begin{aligned}
&=\int_{\mathbb R^m}  
  \exp\bigg\{-\frac{1}{2\lambda}\bigg\|\sqrt{b(s)}\sum\limits_{j=1}^m  g_jv_j \bigg\|_{C_{a,b}'}^2 
 +i\lambda^{-1/2}\bigg(\sqrt{b(s)}\sum\limits_{j=1}^mg_jv_j,a\bigg)_{C_{a,b}'} \bigg\}  \\
&\qquad\quad\times
     \exp\bigg\{-i\lambda^{-1/2}\sqrt{b(s)}\sum\limits_{j=1}^m(g_j,a)_{C_{a,b}'}v_j
                +i\lambda^{-1/2}\sum\limits_{j=1}^m(g_j,a)_{C_{a,b}'}v_j\bigg\}  \nu(\vec v)\\
&=\int_{\mathbb R^m}  
  \exp\bigg\{-\frac{b(s)}{2\lambda}\sum\limits_{j=1}^m\sum\limits_{k=1}^m (g_j,g_k)_{C_{a,b}'}v_jv_k 
                +i\lambda^{-1/2}\sum\limits_{j=1}^m(g_j,a)_{C_{a,b}'}v_j\bigg\}  \nu(\vec v)\\
%%%
&=  \int_{\mathbb R^m} \exp\bigg\{-\frac{b(s)}{2\lambda}\sum\limits_{j=1}^m v_j^2 
    + i \lambda^{-1/2}\sum\limits_{j=1}^m(g_j, a)_{C_{a,b}'} v_j    \bigg\} 
    d \nu( \vec v ). 
\end{aligned}
\]

\par
Now let 
\[
J^*(\lambda) =\int_{\mathbb R^m} \exp\bigg\{ -\frac{b(s)}{2\lambda}\sum_{j=1}^m v_j^2 
    + i \lambda^{-1/2}\sum_{j=1}^m(g_j, a)_{C_{a,b}'} v_j    \bigg\} d \nu( \vec v )
\] 
for $\lambda \in \mathbb C_+$. Then $J^*(\lambda)=J(\lambda)$ for all $\lambda>0$.

We will use the Morera theorem to show that $J^*(\lambda)$ is  analytic on $\mathbb C_+$.
First, let $\{\lambda_l\}_{l=1}^{\infty}$ be a sequence   in $\mathbb C_+$ such that  
$\lambda_l \to \lambda$ through $\mathbb C_+$. Then $\lambda_l^{-1/2}\to \lambda^{-1/2}$
and $\text{Re}(\lambda_l )\ne 0$ for all $l\in \mathbb N$. Thus we have that for 
each $\l\in \mathbb N$,
\[
\begin{aligned}
&\bigg| \exp\bigg\{-\frac{b(s)}{2\lambda_l}
    \sum\limits_{j=1}^m v_j^2 
    + i \lambda_l^{-1/2}\sum\limits_{j=1}^m(g_j, a)_{C_{a,b}'} v_j \bigg\}  \bigg|\\
& = \exp\bigg\{ -\frac{b(s)\textrm{Re}(\lambda_l)}{2|\lambda_l |^2}\sum\limits_{j=1}^m v_j^2
    - \textrm{Im}(\lambda_{l}^{-1/2}) \sum\limits_{j=1}^m(g_j, a)_{C_{a,b}'} v_j  \bigg\}\\
& = \exp\bigg\{ -\frac12 \sum\limits_{j=1}^m 
   \bigg( \frac{\sqrt{b(s)\text{Re}(\lambda_l)}}{ |\lambda_l | }v_j  
   +\frac{|\lambda_l|  \text{Im}(\lambda_l^{-1/2})  }{\sqrt{b(s)\text{Re}(\lambda_l)}}
   (g_j, a)_{C_{a,b}'} \bigg)^2  \\
& \qquad\qquad\qquad\qquad \qquad  
 +\frac12 \sum\limits_{j=1}^m
    \frac{|\lambda_l|^2(\text{Im}(\lambda_l^{-1/2}))^2}{b(s)\text{Re}(\lambda_l)}
    (g_j, a)_{C_{a,b}'}^2\bigg\}\\
&\le  \exp\bigg\{\frac{|\lambda_l|^2(\text{Im}(\lambda_l^{-1/2}))^2}{2b(s)\text{Re}(\lambda_l)}
  \sum\limits_{j=1}^m(g_j, a)_{C_{a,b}'}^2\bigg\}.
\end{aligned}
\] 
Since $\nu\in \mathcal{ M} (\mathbb R^m)$,   we see that
\[
\begin{aligned}
&\bigg| \int_{\mathbb R^m} 
  \exp\bigg\{    \frac{|\lambda_l|^2(\text{Im}(\lambda_l^{-1/2}))^2}{2b(s)\text{Re}(\lambda_l)}
  \sum\limits_{j=1}^m(g_j, a)_{C_{a,b}'}^2\bigg\} d \nu(\vec v)\bigg| \\
&\le \exp\bigg\{    \frac{|\lambda_l|^2(\text{Im}(\lambda_l^{-1/2}))^2}{2b(s)\text{Re}(\lambda_l)}
  \sum\limits_{j=1}^m(g_j, a)_{C_{a,b}'}^2\bigg\} \|\nu\|< + \infty 
\end{aligned}
\]
for each  $l \in \mathbb N$.  Furthermore we have that
\[
\begin{aligned}
&\lim\limits_{l\to\infty}\int_{\mathbb R^m}
 \exp\bigg\{    \frac{|\lambda_l|^2(\text{Im}(\lambda_l^{-1/2}))^2}{2b(s)\text{Re}(\lambda_l)} 
  \sum\limits_{j=1}^m (g_j, a)_{C_{a,b}'}^2 \bigg\} d \nu(\vec v)  \\
& =\lim\limits_{l\to\infty}
\exp\bigg\{    \frac{|\lambda_l|^2(\text{Im}(\lambda_l^{-1/2}))^2}{2b(s)\text{Re}(\lambda_l)}
  \sum\limits_{j=1}^m(g_j, a)_{C_{a,b}'}^2\bigg\} \nu(\mathbb R^m)  \\
&=\exp\bigg\{    \frac{|\lambda |^2(\text{Im}(\lambda^{-1/2}))^2}{2b(s)\text{Re}(\lambda )}
  \sum\limits_{j=1}^m(g_j, a)_{C_{a,b}'}^2\bigg\} \nu(\mathbb R^m)\\
& =  \int_{\mathbb R^m}
\exp\bigg\{    \frac{|\lambda |^2(\text{Im}(\lambda^{-1/2}))^2}{2b(s)\text{Re}(\lambda )}
\sum\limits_{j=1}^m(g_j, a)_{C_{a,b}'}^2\bigg\} d \nu(\vec v).  
\end{aligned}
\]
Thus, by Theorem 4.17 in \cite[p.92]{Royden88}, $J^*(\lambda)$  is continuous on $\mathbb C_+$.
Next, using the fact that
\[
k(\lambda)\equiv\exp\bigg\{ -\frac{b(s)}{2 \lambda}\sum_{j=1}^mv_j^2 
+i \lambda^{-1/2}\sum_{j=1}^m (g_j,a)_{C_{a,b}'}v_j\bigg\} 
\]
is analytic on $\mathbb C_+$, and  applying the Fubini theorem, it follows that
\[
\int_{\triangle} J^*(\lambda) d \lambda
=\int_{\mathbb R^m} \int_{\triangle} k(\lambda) d \lambda d\nu(\vec v)=0
\]
for all rectifiable simple closed curve $\triangle$ lying $\mathbb C_+$. 
Thus by the Morera theorem, $J^*(\lambda)$ is   analytic on $\mathbb C_+$.
Therefore the analytic paths space  integral 
$J^*(\lambda)
=\int_{\mathcal C_0^{\mathrm{gBm}}}^{\mathrm{an}_\lambda} F(\mathfrak x) d\mu_{\mathcal C_0^{\mathrm{gBm}}}(\mathfrak  x) $
exists and   is given by   equation \eqref{eq:Elambda}.
\end{proof}

\par
The observation below will be very useful in the development
of our results for the  analytic paths space Feynman integral
of functionals given by equation \eqref{Fhatnu}.

\par
If $a(t)\equiv 0$ on $[0,T]$, 
then for all $F \in\widehat{\mathfrak T}_{G_m,s}$ given by equation \eqref{Fhatnu},
the  analytic  paths space  Feynman integral 
$\int_{\mathcal C_0^{\mathrm{gBm}}}^{\text{\rm anf}_{q}}F(\mathfrak x) d \mu_{\mathcal C_0^{\mathrm{gBm}}}(\mathfrak x)$
will always exist for all real $q \ne 0$ and be given by the formula
\[
\int_{\mathcal C_0^{\mathrm{gBm}}}^{\text{\rm anf}_{q}}F(\mathfrak x) d \mu_{\mathcal C_0^{\mathrm{gBm}}}(\mathfrak x)
=\int_{\mathbb R^m} \exp\bigg\{ -\frac{ib(s)}{2q} \sum\limits_{j=1}^m v_j^2 
\bigg\}d \nu(\vec v).
\]
However for $a(t)$ as in Section   \ref{sec:path}, and proceeding formally 
using \eqref{eq:Elambda} and \eqref{eq:def-Fint},  we observe that 
$\int_{\mathcal C_0^{\mathrm{gBm}}}^{\text{\rm anf}_{q}}F(\mathfrak x) d \mu_{\mathcal C_0^{\mathrm{gBm}}}(\mathfrak x)$ 
will be given by equation  \eqref{eq:E-feynman}  below if it exists. But the integral on the 
right-hand side of  \eqref{eq:E-feynman}  
might not exist if the real part of
\[
\bigg\{ -\frac{ib(s)}{2q} \sum\limits_{j=1}^m v_j^2 
    +i(-iq)^{-1/2}\sum\limits_{j=1}^m (g_j,a)_{C_{a,b}'} v_j   \bigg\}
\]
is positive.

\par
We emphasize that any functional $F\in \widehat{\mathfrak T}_{G_m,s}$
is bounded on $\mathcal C_0^{\mathrm{gBm}}$, because
\[
\begin{aligned}
|F(\mathfrak x)|
&=\bigg| \int_{\mathbb R^m} 
     \exp\bigg\{i\sum\limits_{j=1}^m
     (g_j, \mathfrak x(s))^{\sim} v_j \bigg\}d\nu(\vec v)\bigg|\\
&\le \int_{\mathbb R^m}
\bigg|\exp\bigg\{i\sum\limits_{j=1}^m
     (g_j, \mathfrak x(s))^{\sim} v_j \bigg\}\bigg|d|\nu|(\vec v) 
%\\&
\le \int_{\mathbb R^m} d|\nu|(\vec v) 
=\|\nu\|<+\infty.
\end{aligned}
\]
However, there is a functional $F$ in  $\widehat{\mathfrak T}_{G_m,s}$ which is not 
analytic  paths space  Feynman integrable on $\mathcal C_0^{\mathrm{gBm}}$.  In order to present an 
example of the functional $F$ which is not   analytic  paths space  Feynman integrable, 
we consider the class 
\[
\widehat{\mathfrak T}_{G_1,s}
=\bigg\{F: F(x)=\widehat \nu(x)=\int_{\mathbb R}\exp\big\{i(g,\mathfrak x(s))^{\sim}v\big\}d\nu(v)
\mbox{ for some } \nu\in \mathcal M (\mathbb R)\bigg\}.
\] 
where $G_1=\{g\}$ is an orthonormal set in $C_{a,b}'[0,T]$. Next   we consider a measure 
$\alpha$ on $\mathbb R$ which is concentrated on the set of natural numbers $\mathbb N$
and is given by $\alpha(\{m\})=1/m^2$ for each $m \in \mathbb N$. Then $\alpha$ is an element 
of $\mathcal M(\mathbb R)$. Consider the functional $F\in \widehat{\mathfrak T}_{G_1,s}$
given by
\[
F(x)=\int_{\mathbb R}\exp\big\{i(g,\mathfrak x(s))^{\sim}v\big\}d\alpha(v).
\] 
In this case, by equation  \eqref{eq:E-feynman}   below, we have that for a positive 
real number $q>0$,
\begin{equation}\label{eq:series}
\begin{aligned}
&\int_{\mathcal C_0^{\mathrm{gBm}}}^{\text{\rm anf}_{q}} F(\mathfrak x)d\mu_{\mathcal C_0^{\mathrm{gBm}}}(\mathfrak x)   \\
&= \int_{\mathbb R}\exp\bigg\{-\frac{ib(s)}{2q}
 v^2+i(-iq)^{-1/2}(g ,a)_{C_{a,b}'}v\bigg\}d \alpha (v) \\
&=\sum_{m=1}^{\infty}\exp\bigg\{ -\frac{ib(s)m^2}{2q} 
 + \bigg(-\frac{1}{\sqrt{2q}}+\frac{i}{\sqrt{2q}}\bigg) (g ,a)_{C_{a,b}'}m \bigg\} \frac{1}{m^2}.
\end{aligned}
\end{equation}
Then,  we have
\[
\begin{aligned}
L
&\equiv\lim_{m\to\infty}\frac{\big|\exp\big\{ -\frac{ib(s) (m+1)^2}{2q} 
 +(-\frac{1}{\sqrt{2q}}+\frac{i}{\sqrt{2q}})(g ,a)_{C_{a,b}'}(m+1)\big\}\frac{1}{(m+1)^2}\big|}
{\big|\exp\big\{ -\frac{ib(s)m^2}{2q} 
 +(-\frac{1}{\sqrt{2q}}+\frac{i}{\sqrt{2q}})(g ,a)_{C_{a,b}'}m\big\}\frac{1}{m^2} \big|}\\
&=\lim_{m\to\infty}\frac{\exp\big\{-\frac{1}{\sqrt{2q}}(g ,a)_{C_{a,b}'}(m+1)\big\}}
{\exp\big\{-\frac{1}{\sqrt{2q}}(g ,a)_{C_{a,b}'}m\big\} } 
=\exp\bigg\{-\frac{1}{\sqrt{2q}}(g ,a)_{C_{a,b}'}\bigg\}. 
\end{aligned}
\]
If  $(g,a)_{C_{a,b}'}<0$, then  $L>1$ and so, by the d'Alembert ratio test, we see that 
the series in the last expression  of \eqref{eq:series} diverges.

\par
Consequently, in view of this example, we clearly need to impose additional restrictions
on the functionals  in $\widehat{\mathfrak T}_{G_m,s}$ to establish the existence of the 
analytic  Feynman integral on $\mathcal C_0^{\mathrm{gBm}}$.

\par 
For a positive real number $q_0$, we define a subclass $\widehat{\mathfrak T}_{G_m,s}^{q_0}$ of 
$\widehat{\mathfrak T}_{G_m,s}$ by $F\in\widehat{\mathfrak T}_{G_m,s}^{q_0}$ if and only if
\begin{eqnarray}\label{eq;condition}
\int_{\mathbb R^m} \exp\bigg\{  \frac{ \|a\|_{C_{a,b}'} }{\sqrt{|2q_0|}}
\sum\limits_{j=1}^m | v_j|   \bigg\} d |\nu|(\vec v)< + \infty.
\end{eqnarray}
where $\nu$ and $F$ are related by \eqref{Fhatnu}.

\par
Note that in case $a(t)\equiv 0$ and $b(t)=t$ on $[0,T]$, the function space $C_{a,b}[0,T]$ 
reduces to the classical Wiener space $C_0[0,T]$ and $(g_j,a)_{C_{a,b}'}=0$ for all $j=1,\ldots,n$. 
In this case, it follows that for all $q_0>0$, 
$\widehat{\mathfrak T}_{G_m,s}^{q_0}=\widehat{\mathfrak T}_{G_m,s}$.

%%--------------------[   theorem   ]
\begin{theorem}\label{thm:feynman}
Given a positive real number  $q_0$, let  $F \in \widehat{\mathfrak T}_{G_m,s}^{q_0}$ be 
given by \eqref{Fhatnu}. Then, for all real $q$ with $|q|> |q_0|$, the  analytic  paths 
space  Feynman integral 
$\int_{\mathcal C_0^{\mathrm{gBm}}}^{\text{\rm anf}_{q}}F(\mathfrak x) d \mu_{\mathcal C_0^{\mathrm{gBm}}}(\mathfrak x)$
exists  and is given by the formula
\begin{equation}\label{eq:E-feynman}
\begin{aligned}
&\int_{\mathcal C_0^{\mathrm{gBm}}}^{\text{\rm anf}_{q}}F(\mathfrak x) d \mu_{\mathcal C_0^{\mathrm{gBm}}}(\mathfrak x)\\
&=\int_{\mathbb R^m} \exp\bigg\{ -\frac{ib(s)}{2q} \sum\limits_{j=1}^m v_j^2 
    +i(-iq)^{-1/2}\sum\limits_{j=1}^m (g_j,a)_{C_{a,b}'} v_j   \bigg\}d \nu(\vec v).
\end{aligned}
\end{equation}
\end{theorem}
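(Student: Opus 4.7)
The plan is to invoke Theorem \ref{thm:analytic}, which already identifies $\int^{\mathrm{an}_\lambda}_{\mathcal C_0^{\mathrm{gBm}}} F\,d\mu_{\mathcal C_0^{\mathrm{gBm}}}$ on the open half-plane $\mathbb C_+$ as the explicit Gaussian integral \eqref{eq:Elambda}, and then to pass to the limit $\lambda \to -iq$ through $\mathbb C_+$ via the Lebesgue dominated convergence theorem. By the definition \eqref{eq:def-Fint}, it suffices to exhibit a $|\nu|$-integrable majorant for the integrand of \eqref{eq:Elambda} that is uniform for $\lambda$ in some neighborhood of $-iq$ within $\mathbb C_+$; the pointwise limit is then the integrand of \eqref{eq:E-feynman} by continuity of $\lambda \mapsto \lambda^{-1/2}$ on $\overline{\mathbb C_+}\setminus\{0\}$ (note $-iq\ne 0$).

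To produce the majorant, I would first compute the modulus of the integrand in \eqref{eq:Elambda}. For $\lambda \in \mathbb C_+$ one has $\mathrm{Re}(-b(s)/(2\lambda)) = -b(s)\mathrm{Re}(\lambda)/(2|\lambda|^2) \le 0$, so the quadratic Gaussian factor contributes modulus at most $1$. Using orthonormality of $\{g_1,\ldots,g_m\}$ together with Cauchy--Schwarz to estimate $|(g_j,a)_{C_{a,b}'}| \le \|a\|_{C_{a,b}'}$, the full integrand is bounded in modulus by
\[
\exp\bigg\{|\mathrm{Im}(\lambda^{-1/2})|\,\|a\|_{C_{a,b}'}\sum_{j=1}^m|v_j|\bigg\}.
\]
A direct computation at $\lambda = -iq$ gives $|\mathrm{Im}((-iq)^{-1/2})| = 1/\sqrt{2|q|}$, and since $|q| > q_0$ by hypothesis, continuity of $\lambda \mapsto \lambda^{-1/2}$ yields $|\mathrm{Im}(\lambda^{-1/2})| < 1/\sqrt{2q_0}$ for all $\lambda$ in some neighborhood $U \subset \mathbb C_+$ of $-iq$. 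On $U$, the bound above is dominated by $\exp\{\|a\|_{C_{a,b}'}(2q_0)^{-1/2}\sum_{j=1}^m|v_j|\}$, which is $|\nu|$-integrable precisely by the defining condition \eqref{eq;condition} of the class $\widehat{\mathfrak T}_{G_m,s}^{q_0}$.

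With pointwise convergence and the uniform majorant in hand, the dominated convergence theorem permits passing to the limit under the integral sign, yielding \eqref{eq:E-feynman}. The main obstacle---and the reason the subclass $\widehat{\mathfrak T}_{G_m,s}^{q_0}$ must be introduced at all---is that $\mathrm{Im}(\lambda^{-1/2})$ does not vanish as $\lambda\to -iq$, so the drift term in $\sum v_j$ produces genuine exponential growth at the boundary which cannot be absorbed by the purely oscillatory Gaussian factor; this is exactly why the divergent series example preceding the statement fails when $(g,a)_{C_{a,b}'}\ne 0$. The restriction $|q|>q_0$ is imposed precisely so that the exponential growth rate $|\mathrm{Im}(\lambda^{-1/2})|\|a\|_{C_{a,b}'}$ can be kept strictly below the threshold $\|a\|_{C_{a,b}'}/\sqrt{2q_0}$ allowed by \eqref{eq;condition}.
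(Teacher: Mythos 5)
Your proposal is correct and follows essentially the same route as the paper: pointwise convergence of the integrand of \eqref{eq:Elambda} as $\lambda\to -iq$ through $\mathbb C_+$, a uniform majorant $\exp\{\|a\|_{C_{a,b}'}(2q_0)^{-1/2}\sum_{j=1}^m|v_j|\}$ obtained by bounding the Gaussian factor by $1$ and estimating $|(g_j,a)_{C_{a,b}'}|\le\|a\|_{C_{a,b}'}$, and then the dominated convergence theorem via condition \eqref{eq;condition}. The only cosmetic difference is that you verify the Gaussian factor has modulus at most $1$ directly from $\mathrm{Re}(1/\lambda)>0$, whereas the paper derives it from $|\mathrm{arg}(\lambda^{-1/2})|<\pi/4$ so that $[\mathrm{Re}(\lambda^{-1/2})]^2\ge[\mathrm{Im}(\lambda^{-1/2})]^2$; both are valid.
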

%%--------------------[   proof   ]
\begin{proof} 
Let $\{\lambda_l\}_{l=1}^{\infty}$ be a sequence of complex numbers such that 
$\lambda_l \to-iq$ through $\mathbb C_+$ and  for each $l\in \mathbb N$, let 
\[
f_l (\vec v)=\exp\bigg\{-\frac{b(s)}{2\lambda_l}\sum_{j=1}^m v_j^2
+i\lambda_l^{-1/2}\sum_{j=1}^{n}(g_j,a)_{C_{a,b}'}v_j\bigg\}.
\] 
Then $f_l (\vec v)$ converges to 
\[
f(\vec v)\equiv  
\exp\bigg\{-\frac{ib(s)}{2q} \sum_{j=1}^m v_j^2
     +i(-iq)^{-1/2}\sum_{j=1}^{n}(g_j,a)_{C_{a,b}'}v_j\bigg\}.
\] 
By Theorem \ref{thm:analytic}, for all $l\in \mathbb N$, 
$\int_{\mathbb R^m }f_l (\vec v) d \nu(\vec v)$ exists.
Since $|\text{\rm arg}(\lambda_l^{-1/2})|< \pi/4$ for every $l\in \mathbb N$ and 
$\lambda_l^{-1/2}=\text{\rm Re}(\lambda_l^{-1/2})+i\text{\rm Im}(\lambda_l^{-1/2}) 
\to (-iq)^{-1/2}=1/\sqrt{|2q|} +i\text{\rm sign}(q)/\sqrt{|2q|}$,
we see that $\text{\rm Re}(\lambda_l^{-1/2})>|\text{\rm Im}(\lambda_l^{-1/2})|$ 
for every $l\in \mathbb N$ and there exists a sufficiently large $k\in \mathbb N$ such 
that $|\text{\rm Im}(\lambda_l^{-1/2})|<1/\sqrt{|2q|}<1/\sqrt{|2q_0|}$ for every $l\ge k$.
Thus, using the Cauchy--Schwartz inequality, it follows that for each $l\ge k$,
\[
\begin{aligned}
|f_l (\vec v)|
&=\bigg|\exp\bigg\{-\frac{b(s)}{2}
\Big([\text{\rm Re}(\lambda_l^{-1/2})]^2 -[\text{\rm Im}(\lambda_l^{-1/2})]^2 \\
& \qquad\qquad\qquad\qquad\qquad\qquad
+i\text{\rm Re}(\lambda_l^{-1/2})\text{\rm Im}(\lambda_l^{-1/2})\Big)\sum_{j=1}^mv_j^2\\
& \qquad\qquad
+i \Big(\text{\rm Re}(\lambda_l^{-1/2})+i\text{\rm Im}(\lambda_l^{-1/2})\Big)
\sum_{j=1}^m(g_j,a)_{C_{a,b}'}v_j\bigg\}\bigg|\\
&\le\exp\bigg\{-\text{\rm Im}(\lambda_l^{-1/2} )\sum\limits_{j=1}^m (g_j,a)_{C_{a,b}'}v_j \bigg\}\\
&\le  \exp \bigg\{|\text{\rm Im}(\lambda_l^{-1/2} )| \|a\|_{C_{a,b}'} \sum\limits_{j=1}^m | v_j|  \bigg\} \\
\end{aligned}
\]
\[
\begin{aligned}
&  <   \exp \bigg\{\frac{ \|a\|_{C_{a,b}'} }{\sqrt{|2q_0|}}
 \sum\limits_{j=1}^m | v_j|  \bigg\}
\end{aligned}
\]
and so, by condition \eqref{eq;condition},
\begin{equation*}
\begin{aligned}
\bigg|\int_{\mathbb R^m} f_l(\vec v) d \nu (\vec v) \bigg| 
&\le\int_{\mathbb R^m} \big|f_l(\vec v)\big| |d \nu (\vec v)|  \\
&  <  \int_{\mathbb R^m}  \exp \bigg\{\frac{ \|a\|_{C_{a,b}'} }{\sqrt{|2q_0|}}
\sum\limits_{j=1}^m | v_j|  \bigg\}  d |\nu|(\vec v)   < +\infty.
\end{aligned}\end{equation*}
Also, by condition \eqref{eq;condition}, we have
\begin{equation*}
\begin{aligned}
\bigg|\int_{\mathbb R^m} f(\vec v) d \nu (\vec v) \bigg| 
&\le \int_{\mathbb R^m} \exp \bigg\{\frac{ \|a\|_{C_{a,b}'} }{\sqrt{|2q|}}
\sum\limits_{j=1}^m | v_j|  \bigg\} d |\nu|(\vec v)  \\
& <  \int_{\mathbb R^m} \exp \bigg\{\frac{ \|a\|_{C_{a,b}'} }{\sqrt{|2q_0|}}
\sum\limits_{j=1}^m | v_j|  \bigg\} d |\nu|(\vec v)   < +\infty.
\end{aligned}\end{equation*}
Thus by the dominated convergence theorem, 
it follows  equation \eqref{eq:E-feynman}.
\end{proof}

%%%%%%%%%%%%%%%%%%%%%%%%%%%%%%%%%%%%%%%%%%%%%%%%%%%%%%%%%%%%%%%%%%%%%%%%%%%%%%%
%%%%%%%%%%%%%%%%%%%%%%%%%%%%%%%%%%%%%%%%%%%%%%%%%%%%%%%%%%%%%%%%%%%%%%%%%%%%%%%
%%%%%%%%%%%%%%%%%%%%%%%%%%%%%%%%%%%%%%%%%%%%%%%%%%%%%%%%%%%%%%%%%%%%%%%%%%%%%%%
%%%%%%%%%%%                                               %%%%%%%%%%%%%%%%%%%%%
%%%%%%%%%%%                  sec                          %%%%%%%%%%%%%%%%%%%%%
%%%%%%%%%%%                                               %%%%%%%%%%%%%%%%%%%%%
%%%%%%%%%%%%%%%%%%%%%%%%%%%%%%%%%%%%%%%%%%%%%%%%%%%%%%%%%%%%%%%%%%%%%%%%%%%%%%%
%%%%%%%%%%%%%%%%%%%%%%%%%%%%%%%%%%%%%%%%%%%%%%%%%%%%%%%%%%%%%%%%%%%%%%%%%%%%%%%
%%%%%%%%%%%%%%%%%%%%%%%%%%%%%%%%%%%%%%%%%%%%%%%%%%%%%%%%%%%%%%%%%%%%%%%%%%%%%%%
 \subsection{Functionals in $\widehat{\mathfrak T}_{G_m,\vec s}$}\label{sec:cylinder}

\par
Let $n$ and $m$ be positive integers. Given an $n$-tuple   $\vec s=(s_1,\ldots, s_n)$ with 
$0=s_0< s_1<\cdots <s_n \le T$ and an  orthonormal set  $G_m=\{g_1,\ldots, g_m\}$  of functions 
in $C_{a,b}'[0,T]$, let $\widehat{\mathfrak T}_{G_m,\vec s}$ be the space of all functionals 
$F$ on $\mathcal C_0^{\mathrm{gBm}}$ of the form
\begin{equation}\label{eq:Fhatmn}
F(\mathfrak x)=\widehat \nu(( g_1 ,\mathfrak x(s_1) )^{\sim},\ldots, ( g_m,\mathfrak x(s_1))^{\sim},
\ldots, ( g_1 ,\mathfrak x(s_n) )^{\sim},\ldots, ( g_m,\mathfrak x(s_n))^{\sim})
\end{equation}
for  $\mathfrak x\in \mathcal C_0^{\mathrm{gBm}}$, where $\nu$ is an element of $\mathcal M(\mathbb R^{mn})$,
the class of all complex-valued Borel measures on $\mathbb R^{mn}$ with finite total variation,
and $\widehat \nu$ denotes the Fourier--Stieltjes transform of $\nu$ given by
\begin{equation}\label{eq:hatnumn}
\widehat\nu (\vec u)
=\int_{\mathbb R^{mn}} \exp\bigg\{ i \sum\limits_{j=1}^m\sum\limits_{k=1}^n u_{j,k} v_{j,k} \bigg\}
d \nu (\vec v)
\end{equation}
where  
\[
\vec u=(u_{1,1},\ldots,u_{1,n},u_{2,1},\ldots,u_{2,n},\ldots,u_{m,1},\ldots,u_{m,n})\in\mathbb R^{mn}
\]
and
\[
\vec v=(v_{1,1},\ldots,v_{1,n},v_{2,1},\ldots,v_{2,n},\ldots,v_{m,1},\ldots,v_{m,n})\in\mathbb R^{mn}.
\]
 Also, for a positive real number $q_0$, we define a subclass $\widehat{\mathfrak T}_{G_m,\vec s}^{q_0}$ 
of $\widehat{\mathfrak T}_{G_m,\vec s}$ by $F\in\widehat{\mathfrak T}_{G_m,\vec s}^{q_0}$ if and only if
\[
\int_{\mathbb R^{mn}} \exp\bigg\{  \frac{ \|a\|_{C_{a,b}'} }{\sqrt{|2q_0|}}
\sum\limits_{j=1}^m\sum\limits_{k=1}^n | v_{j,k}|   \bigg\} d |\nu|(\vec v)< + \infty.
\]
where $\nu$ and $F$ are related by \eqref{eq:Fhatmn}.

\par
Our next theorem shows the analytic paths  space integral exists for all  
$F\in\widehat{\mathfrak T}_{G_m,\vec s}$. The following summation formula
\begin{equation}\label{eq:sumsum}
\sum_{k=1}^n\sum_{l=1}^k A_lB_k=\sum_{l=1}^n\sum_{k=l}^n A_lB_k
\end{equation}
will be helpful to prove the theorem.

%%--------------------[   theorem   ]
\begin{theorem}\label{thm:analytic-tuple} 
Let $F\in\widehat{\mathfrak T}_{G_m,\vec s}$ be given by equation \eqref{eq:Fhatmn}.
Then for each $\lambda \in \mathbb C_+$, the analytic paths  space integral 
$\int_{\mathcal C_0^{\mathrm{gBm}}}^{\text{\rm an}_{\lambda}}F(\mathfrak x) d \mu_{\mathcal C_0^{\mathrm{gBm}}}(\mathfrak x)$ 
exists  and is  given by the formula
\[
\begin{aligned}
\int_{\mathcal C_0^{\mathrm{gBm}}}^{\text{\rm an}_{\lambda}}
F(\mathfrak x) d \mu_{\mathcal C_0^{\mathrm{gBm}}}(\mathfrak x)
&=\int_{\mathbb R^{mn}} 
\exp\bigg\{-\frac{1}{2\lambda}\sum\limits_{j=1}^m\sum_{l=1}^n
 [b(s_l)-b(s_{l-1})]\bigg( \sum\limits_{k=l}^n v_{j,k}\bigg)^2\\
&\qquad\qquad\qquad
      +i\lambda^{-1/2}\sum\limits_{j=1}^m\sum\limits_{k=1}^n
     (g_j,a)_{C_{a,b}'}v_{j,k}\bigg\}  \nu(\vec v).
\end{aligned}
\]
\end{theorem}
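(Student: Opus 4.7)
My plan is to mirror the proof of Theorem \ref{thm:analytic} (the $n=1$ case), with additional index bookkeeping for the double-indexed variables $v_{j,k}$. First, for real $\lambda>0$, I would write out $F(\lambda^{-1/2}\mathfrak x)$ using \eqref{eq:hatnumn}, apply Fubini to bring the $\nu$-integral outside, and then invoke the Paths Space Integration Theorem \ref{thm:well} to convert the remaining paths space integral into an $n$-fold product integral over $C_{a,b}^n[0,T]$. After the transformation $T_{\vec s}$, the argument of the exponential involves $(g_j, L_{\vec s,k}(\vec x))^{\sim}$, which by linearity of the PWZ integral together with \eqref{eq:pwz-meam-type} expands as
\[
\sum_{l=1}^k \sqrt{b(s_l)-b(s_{l-1})}\bigl[(g_j,x_l)^{\sim} - (g_j,a)_{C_{a,b}'}\bigr] + (g_j,a)_{C_{a,b}'}.
\]

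Next, I would apply the summation identity \eqref{eq:sumsum} to swap $\sum_{k=1}^n\sum_{l=1}^k$ into $\sum_{l=1}^n\sum_{k=l}^n$, introducing the aggregate coefficients $V_{j,l}\equiv \sum_{k=l}^n v_{j,k}$. The exponent then splits into a sum over $l$ of PWZ integrals $\bigl(\sum_{j=1}^m V_{j,l}g_j, x_l\bigr)^{\sim}$ scaled by $i\lambda^{-1/2}\sqrt{b(s_l)-b(s_{l-1})}$, together with deterministic drift correction terms. Since the variables $x_1,\ldots,x_n$ decouple in the exponent, the product measure integral factorizes across $l$, and on each factor I would apply the basic Gaussian identity \eqref{eq:int-formula-cab}. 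Orthonormality of $G_m$ collapses $\bigl\|\sum_{j=1}^m V_{j,l}g_j\bigr\|_{C_{a,b}'}^2$ to $\sum_{j=1}^m V_{j,l}^2$, and the drift outputs produced by \eqref{eq:int-formula-cab} cancel the correction terms set aside earlier, leaving precisely the claimed formula for $J(\lambda)$.

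Finally, to extend this identity from real $\lambda>0$ to the analytic paths space integral on all of $\mathbb C_+$, I would invoke Morera's theorem exactly as in Theorem \ref{thm:analytic}: the candidate integrand is analytic in $\lambda\in\mathbb C_+$ pointwise in $\vec v$, and after separating the real and imaginary parts of $\lambda^{-1}$ and $\lambda^{-1/2}$ and completing the square in the $v_{j,k}$, one obtains a locally uniform modulus bound by an $L^1(|\nu|)$-dominating function depending continuously on $\lambda$, which justifies applying Fubini on any rectifiable closed contour $\triangle\subset\mathbb C_+$. The main technical obstacle is the bookkeeping in the middle step: verifying that the drift corrections introduced by the transformation $T_{\vec s}$ cancel cleanly against the drift outputs produced by \eqref{eq:int-formula-cab}. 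Once the aggregate coefficients $V_{j,l}$ are set up and the summation swap \eqref{eq:sumsum} is applied consistently, the remainder reduces to the Gaussian computation already carried out in the proof of Theorem \ref{thm:analytic}.
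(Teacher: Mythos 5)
Your proposal is correct and follows essentially the same route as the paper's proof: Fubini plus the Paths Space Integration Theorem to reduce to $C_{a,b}^n[0,T]$, the summation swap \eqref{eq:sumsum} to form the aggregate coefficients $\sum_{k=l}^n v_{j,k}$, factorization over the product measure, the Gaussian identity \eqref{eq:int-formula-cab} with orthonormality of $G_m$ and cancellation of the drift terms, and finally Morera's theorem (which the paper itself only sketches by reference to Theorem \ref{thm:analytic}). No gaps.
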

%%--------------------[   proof   ]
\begin{proof}
By \eqref{eq:Fhatmn}, \eqref{eq:hatnumn}, the Fubini theorem, \eqref{eq:well} together with 
\eqref{eq:Tt}, and  \eqref{eq:Ls}, we first obtain that for all $\lambda>0$,
\begin{equation}\label{eq:last-lambda}
\begin{aligned}
 J(\lambda) 
& =\int_{\mathcal C_0^{\mathrm{gBm}}} F(\lambda^{-1/2}\mathfrak x)d\mu_{\mathcal C_0^{\mathrm{gBm}}}(\mathfrak x)  \\
&= \int_{\mathbb R^{mn}} \int_{\mathcal C_0^{\mathrm{gBm}}} 
     \exp\bigg\{i\lambda^{-1/2}\sum\limits_{j=1}^m\sum\limits_{k=1}^n
     (g_j, \mathfrak x(s_k))^{\sim} v_{j,k}  \bigg\}d\mu_{\mathcal C_0^{\mathrm{gBm}}}(\mathfrak x)d\nu(\vec v) \\
&= \int_{\mathbb R^{mn}} \int_{C_{a,b}^n[0,T]}
     \exp\bigg\{i\lambda^{-1/2}\sum\limits_{j=1}^m\sum\limits_{k=1}^n
      \big(g_j, L_{\vec{s},k}(\vec{x})\big)^{\sim}v_{j,k}  \bigg\}d\mu^n(\vec x) d \nu(\vec v)\\
\end{aligned}
\end{equation}
\[%%begin{equation}\label{eq:ssmean}
\begin{aligned}%%%%%%%%%%%%%%%%%%%%%%%%%%%%%%%%
&= \int_{\mathbb R^{mn}} \int_{C_{a,b}^n[0,T]}\\
&\qquad\quad\times
     \exp\bigg\{i\lambda^{-1/2} \sum\limits_{j=1}^m\sum\limits_{k=1}^n
     \bigg[\sum\limits_{l=1}^k\sqrt{b(s_l)-b(s_{l-1})}(g_j,x_l)^{\sim}\bigg]v_{j,k}\bigg\}d\mu^n(\vec x)\\
&\quad\times
     \exp\bigg\{-i\lambda^{-1/2}\sum\limits_{j=1}^m\sum\limits_{k=1}^n
      \bigg[\sum\limits_{l=1}^k\sqrt{b(s_l)-b(s_{l-1})}(g_j,a)_{C_{a,b}'}\bigg]v_{j,k}\\
&\qquad\qquad\,\,\,
      +i\lambda^{-1/2}\sum\limits_{j=1}^m\sum\limits_{k=1}^n(g_j,a)_{C_{a,b}'}v_{j,k}\bigg\} d \nu(\vec v).
\end{aligned}
\]%end{equation}
But, using \eqref{eq:sumsum}, the first  and the second triple summations
in the last expression of \eqref{eq:last-lambda} can be rewritten by
\begin{equation}\label{eq:ssvar}
\begin{aligned}
& \sum\limits_{j=1}^m\sum\limits_{k=1}^n\bigg[\sum\limits_{l=1}^k
   \sqrt{b(s_l)-b(s_{l-1})}(g_j,x_l)^{\sim}\bigg]v_{j,k} \\
%%%%%%%%%%%%%%%%%%%%%%%%%%%%%%%%%%%%%%%%%%%%
&=\sum\limits_{j=1}^m\sum\limits_{l=1}^n\bigg[\sum\limits_{k=l}^n
   \sqrt{b(s_l)-b(s_{l-1})}(g_j,x_l)^{\sim}\bigg]v_{j,k}\\ 
%%%%%%%%%%%%%%%%%%%%%%%%%%%%%%%%%%%%%%%%%%%%
&=\sum\limits_{l=1}^n\sum\limits_{j=1}^m\bigg[\sum\limits_{k=l}^n
   \sqrt{b(s_l)-b(s_{l-1})}(g_j,x_l)^{\sim}\bigg]v_{j,k} 
\end{aligned}
\end{equation}
and 
\begin{equation}\label{eq:ssmean}
\begin{aligned}
& \sum\limits_{j=1}^m\sum\limits_{k=1}^n\bigg[\sum\limits_{l=1}^k
   \sqrt{b(s_l)-b(s_{l-1})}(g_j,a)_{C_{a,b}'}\bigg]v_{j,k}\\
&=\sum\limits_{j=1}^m\sum\limits_{l=1}^n\bigg[\sum\limits_{k=l}^n
   \sqrt{b(s_l)-b(s_{l-1})}(g_j,a)_{C_{a,b}'}\bigg]v_{j,k}\\
&=\sum\limits_{l=1}^n\sum\limits_{j=1}^m\bigg[\sum\limits_{k=l}^n
   \sqrt{b(s_l)-b(s_{l-1})}(g_j,a)_{C_{a,b}'}\bigg]v_{j,k},
\end{aligned}
\end{equation}
respectively. Using \eqref{eq:ssvar}, \eqref{eq:ssmean}, \eqref{eq:int-formula-cab}, 
and the fact that the set $\{g_1,\ldots,g_n\}$ is orthonormal in $C_{a,b}'[0,T]$,
it follows that for all $\lambda>0$,
\begin{equation}\label{eq:last-lambda-II}
\begin{aligned}
J(\lambda)  
&= \int_{\mathbb R^{mn}}\Bigg( \int_{C_{a,b}^n[0,T]}\\
&\quad\quad \times
     \exp\bigg\{i\lambda^{-1/2} 
    \sum\limits_{l=1}^n\sum\limits_{j=1}^m\bigg[\sum\limits_{k=l}^n
   \sqrt{b(s_l)-b(s_{l-1})}(g_j,x_l)^{\sim}\bigg]v_{j,k} \bigg\}d\mu^n(\vec x)\Bigg)\\
\end{aligned}
\end{equation}
\[%%begin{equation}\label{eq:ssmean}
\begin{aligned}
&\quad\quad\quad\times
     \exp\bigg\{-i\lambda^{-1/2}
    \sum\limits_{l=1}^n\sum\limits_{j=1}^m\bigg[\sum\limits_{k=l}^n
   \sqrt{b(s_l)-b(s_{l-1})}(g_j,a)_{C_{a,b}'}\bigg]v_{j,k}\\
&\quad\qquad\qquad\,\,
      +i\lambda^{-1/2}\sum\limits_{j=1}^m\sum\limits_{k=1}^n
    (g_j,a)_{C_{a,b}'}v_{j,k}\bigg\} d \nu(\vec v)\\
%%%%%%%%%%%%%%%%%%%%%%%%%%%%%%%%%%%%%%%%%%%%%% 
&= \int_{\mathbb R^{mn}}\Bigg(\prod_{l=1}^n\bigg[\int_{C_{a,b}[0,T]}\\
&\quad\qquad\times
\exp\bigg\{i\lambda^{-1/2} \bigg(\sum\limits_{j=1}^m\sum\limits_{k=l}^n
\sqrt{b(s_l)-b(s_{l-1})}v_{j,k} g_j,x_l\bigg)^{\sim}\bigg\}d\mu (x_l)\bigg]\Bigg)\\
&\quad\quad\quad\times
    \exp\bigg\{-i\lambda^{-1/2}\sum\limits_{l=1}^n\sum\limits_{j=1}^m
    \bigg[\sum\limits_{k=l}^n\sqrt{b(s_l)-b(s_{l-1})}(g_j,a)_{C_{a,b}'}\bigg]v_{j,k}\\
%%%%%%%%%%%%%%%%%%%%%%%%%%%%%%%%%%%%%%%%%%%%%
&\quad\qquad\qquad\,\,
      +i\lambda^{-1/2}\sum\limits_{j=1}^m\sum\limits_{k=1}^n(g_j,a)_{C_{a,b}'}v_{j,k}\bigg\}  
d\nu(\vec v)\\
%%%%%%%%%%%%%%%%%%%%%%%%%%%%%%%%%%%%%%%%%%%%%% 
&=\int_{\mathbb R^{mn}}\Bigg(\prod_{l=1}^n\bigg[
\exp\bigg\{-\frac{1}{2\lambda}\bigg\|\sum\limits_{j=1}^m\sum\limits_{k=l}^n
  \sqrt{b(s_l)-b(s_{l-1})}v_{j,k}g_j\bigg\|_{C_{a,b}'}^2\\
&\qquad \qquad\qquad\qquad  
+i\lambda^{-1/2} \bigg(\sum\limits_{j=1}^m\sum\limits_{l=1}^k
    \sqrt{b(s_l)-b(s_{l-1})}v_{j,k} g_j,a\bigg)_{C_{a,b}'}\bigg\}\bigg]\Bigg) \\
%%%%%%%%%%%%%%%%%%%%%%%%%%%%%%%%%
&\qquad\,\,\,\times
    \exp\bigg\{-i\lambda^{-1/2}\sum\limits_{l=1}^n\sum\limits_{j=1}^m
     \bigg[\sum\limits_{k=l}^n\sqrt{b(s_l)-b(s_{l-1})}(g_j,a)_{C_{a,b}'}\bigg]v_{j,k}\\
&\qquad\qquad\qquad\,\,
      +i\lambda^{-1/2}\sum\limits_{j=1}^m\sum\limits_{k=1}^n
     (g_j,a)_{C_{a,b}'}v_{j,k}\bigg\} d \nu(\vec v)\\
%%%%%%%%%%%%%%%%%%%%%%%%%%%%%%%%%%%%%%%%%%%%%%
&=\int_{\mathbb R^{mn}} 
\exp\bigg\{-\frac{1}{2\lambda}\sum_{l=1}^n\sum\limits_{j=1}^m\bigg(\sum\limits_{k=l}^n
  \sqrt{b(s_l)-b(s_{l-1})}v_{j,k}\bigg)^2\\
&\qquad \qquad\qquad 
      +i\lambda^{-1/2}\sum\limits_{j=1}^m\sum\limits_{k=1}^n
     (g_j,a)_{C_{a,b}'}v_{j,k}\bigg\} d \nu(\vec v)\\
%%%%%%%%%%%%%%%%%%%%%%%%%%%%%%%%%%%%%%%%%%%%%% 
&=\int_{\mathbb R^{mn}} 
\exp\bigg\{-\frac{1}{2\lambda}\sum\limits_{j=1}^m\sum_{l=1}^n
[b(s_l)-b(s_{l-1})]\bigg( \sum\limits_{k=l}^n v_{j,k}\bigg)^2\\
&\qquad \qquad\qquad 
      +i\lambda^{-1/2}\sum\limits_{j=1}^m\sum\limits_{k=1}^n
     (g_j,a)_{C_{a,b}'}v_{j,k}\bigg\} d \nu(\vec v).
\end{aligned}
\]%%end{equation}

\par 
Next,  let $J^*(\lambda)$ be given by the last expression of \eqref{eq:last-lambda-II}
for each $\lambda\in \mathbb C_+$. 
%%\[
%%\begin{aligned}
%%J^*(\lambda) 
%%&=\int_{\mathbb R^{mn}} 
%%\exp\bigg\{-\frac{1}{2\lambda}\sum\limits_{j=1}^m\sum_{l=1}^n
%%[b(s_l)-b(s_{l-1})]\bigg( \sum\limits_{k=l}^n v_{j,k}\bigg)^2\\
%%&\qquad \qquad\qquad 
%%      +i\lambda^{-1/2}\sum\limits_{j=1}^m\sum\limits_{k=1}^n
%%     (g_j,a)_{C_{a,b}'}v_{j,k}\bigg\}  \nu(\vec v) 
%%\end{aligned}
%%\]
%%for $\lambda \in \mathbb C_+$.
Then using the techniques similar to those used in the proof of Theorem \ref{thm:analytic},
we can show that  $J^*(\lambda)=J(\lambda)$ for all $\lambda>0$ 
and $J^*(\lambda)$ is   analytic on $\mathbb C_+$.
This completes the proof.
\end{proof}

\par
Our next  theorem  follows quite readily from the techniques developed in
the proof of Theorem \ref{thm:feynman}.

%%--------------------[   theorem   ]
\begin{theorem} 
Given a positive real number  $q_0$, let  
$F \in \widehat{\mathfrak T}_{G_m,\vec s}^{q_0}$ be given by  \eqref{eq:Fhatmn}.
Then, for all real $q$ with $|q|> |q_0|$, the generalized analytic Feynman integral 
$\int_{\mathcal C_0^{\mathrm{gBm}}}^{\text{\rm anf}_{q}}F(\mathfrak x) d \mu_{\mathcal C_0^{\mathrm{gBm}}}(\mathfrak x)$
exists  and is given by the formula
\[
\begin{aligned}
&\int_{\mathcal C_0^{\mathrm{gBm}}}^{\text{\rm anf}_{q}}F(\mathfrak x) d \mu_{\mathcal C_0^{\mathrm{gBm}}}(\mathfrak x)
=\int_{\mathbb R^{mn}} \exp\bigg\{-\frac{i}{2q}\sum\limits_{j=1}^m\sum_{l=1}^n
[b(s_l)-b(s_{l-1})]\bigg( \sum\limits_{k=l}^n v_{j,k}\bigg)^2\\
&\qquad\qquad\qquad\qquad\qquad\qquad\qquad
     +i(-iq)^{-1/2}\sum\limits_{j=1}^m\sum\limits_{k=1}^n
     (g_j,a)_{C_{a,b}'}v_{j,k}\bigg\}d \nu(\vec v). 
\end{aligned}
\]
\end{theorem}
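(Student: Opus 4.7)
The plan is to mirror the proof of Theorem \ref{thm:feynman} in the obvious way, using the multi-time integral formula of Theorem \ref{thm:analytic-tuple} as the starting point and passing to the Feynman parameter $-iq$ via the dominated convergence theorem. Fix a real $q$ with $|q|>|q_0|$ and a sequence $\{\lambda_r\}_{r\in\mathbb N}\subset\mathbb C_+$ with $\lambda_r\to -iq$. For each $r$, set
\[
f_r(\vec v)=\exp\bigg\{-\frac{1}{2\lambda_r}\sum_{j=1}^m\sum_{l=1}^n[b(s_l)-b(s_{l-1})]\bigg(\sum_{k=l}^n v_{j,k}\bigg)^{2}+i\lambda_r^{-1/2}\sum_{j=1}^m\sum_{k=1}^n(g_j,a)_{C_{a,b}'}v_{j,k}\bigg\}.
\]
By Theorem \ref{thm:analytic-tuple},
$\int_{\mathcal C_0^{\mathrm{gBm}}}^{\mathrm{an}_{\lambda_r}}F(\mathfrak x)\,d\mu_{\mathcal C_0^{\mathrm{gBm}}}(\mathfrak x)=\int_{\mathbb R^{mn}} f_r(\vec v)\,d\nu(\vec v)$,
and pointwise $f_r\to f$, where $f$ is the integrand appearing on the right-hand side of the asserted formula. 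By \eqref{eq:def-Fint}, establishing the theorem reduces to justifying the exchange of limit and integral.

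To construct a dominating function I would exploit the fact that, for $\lambda_r\in\mathbb C_+$, $\mathrm{Re}(1/\lambda_r)>0$, so the real part of the quadratic term in $\log f_r$ is nonpositive; consequently
\[
|f_r(\vec v)|\le \exp\bigg\{-\mathrm{Im}(\lambda_r^{-1/2})\sum_{j=1}^m\sum_{k=1}^n(g_j,a)_{C_{a,b}'}v_{j,k}\bigg\}\le \exp\bigg\{|\mathrm{Im}(\lambda_r^{-1/2})|\,\|a\|_{C_{a,b}'}\sum_{j=1}^m\sum_{k=1}^n|v_{j,k}|\bigg\},
\]
using Cauchy--Schwarz together with $\|g_j\|_{C_{a,b}'}=1$ to get $|(g_j,a)_{C_{a,b}'}|\le \|a\|_{C_{a,b}'}$. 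Since $\lambda_r^{-1/2}\to (-iq)^{-1/2}=1/\sqrt{|2q|}+i\,\mathrm{sign}(q)/\sqrt{|2q|}$ and $|q|>|q_0|$, for all sufficiently large $r$ one has $|\mathrm{Im}(\lambda_r^{-1/2})|<1/\sqrt{|2q_0|}$, and so
\[
|f_r(\vec v)|< \exp\bigg\{\frac{\|a\|_{C_{a,b}'}}{\sqrt{|2q_0|}}\sum_{j=1}^m\sum_{k=1}^n|v_{j,k}|\bigg\},
\]
which is $|\nu|$-integrable by the very definition of $\widehat{\mathfrak T}_{G_m,\vec s}^{q_0}$. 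The same bound applies to the pointwise limit $f$, so the dominated convergence theorem delivers the stated formula.

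The only substantive difference from the proof of Theorem \ref{thm:feynman} is notational bookkeeping: the single summation index $j$ is replaced by the pair $(j,k)$, and the simple quadratic $b(s)\sum_j v_j^2$ is replaced by the telescoped quadratic form coming from Theorem \ref{thm:analytic-tuple}. No new analytic difficulty appears, because this more intricate quadratic still sits behind $\mathrm{Re}(1/\lambda_r)>0$ and contributes a nonpositive real part that drops out of modulus estimates; the entire integrability burden falls on the linear-in-$\vec v$ term in the exponent, which is precisely what the defining condition of $\widehat{\mathfrak T}_{G_m,\vec s}^{q_0}$ was designed to absorb. The only point requiring care, and the nearest thing to an obstacle, is checking that the threshold on $r$ guaranteeing $|\mathrm{Im}(\lambda_r^{-1/2})|<1/\sqrt{|2q_0|}$ can be chosen uniformly in $(j,k)$, which is immediate from the scalar convergence $\lambda_r^{-1/2}\to(-iq)^{-1/2}$.
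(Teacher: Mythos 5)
Your proposal is correct and follows exactly the route the paper intends: the paper gives no separate argument for this theorem, stating only that it ``follows quite readily from the techniques developed in the proof of Theorem \ref{thm:feynman},'' and your adaptation---discarding the quadratic term via $\mathrm{Re}(1/\lambda_r)>0$, bounding the linear term by Cauchy--Schwarz, and invoking dominated convergence against the integrability condition defining $\widehat{\mathfrak T}_{G_m,\vec s}^{q_0}$---is precisely that adaptation. No gaps.
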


%%%%%%%%%%%%%%%%%%%%%%%%%%%%%%%%%%%%%%%%%%%%%%%%%%%%%%%%%%%%%%%%%%%%%%%%%%%%%%%
%%%%%%%%%%%%%%%%%%%%%%%%%%%%%%%%%%%%%%%%%%%%%%%%%%%%%%%%%%%%%%%%%%%%%%%%%%%%%%%
%%%%%%%%%%%%%%%%%%%%%%%%%%%%%%%%%%%%%%%%%%%%%%%%%%%%%%%%%%%%%%%%%%%%%%%%%%%%%%%
%%%%%%%%%%%                                               %%%%%%%%%%%%%%%%%%%%%
%%%%%%%%%%%                  ref                          %%%%%%%%%%%%%%%%%%%%%
%%%%%%%%%%%                                               %%%%%%%%%%%%%%%%%%%%%
%%%%%%%%%%%%%%%%%%%%%%%%%%%%%%%%%%%%%%%%%%%%%%%%%%%%%%%%%%%%%%%%%%%%%%%%%%%%%%%
%%%%%%%%%%%%%%%%%%%%%%%%%%%%%%%%%%%%%%%%%%%%%%%%%%%%%%%%%%%%%%%%%%%%%%%%%%%%%%%
%%%%%%%%%%%%%%%%%%%%%%%%%%%%%%%%%%%%%%%%%%%%%%%%%%%%%%%%%%%%%%%%%%%%%%%%%%%%%%%


\begin{thebibliography}{99} 


 





%* 
\bibitem{Cameron62-63}   R.H. Cameron,
{The ILSTOW and Feynman integrals},
{J. D'Analyse Mathematique}  \textbf{10}  (1962-63), 287--361. 
%*
\bibitem{CS76}  R.H. Cameron,  D.A.  Storvick,
{An $L_2$ analytic Fourier--Feynman transform},
{Michigan Math. J.} \textbf{23}(1976), 1--30.   
%*
\bibitem{CS80-I}  R.H.  Cameron,   D.A.  Storvick,
{Some Banach algebras of analytic Feynman integrable functionals}, 
In: Analytic Functions (Kozubnik, 1979). Lecture Notes in Mathematics, vol.  798,  pp. 18--67. 
Springer, Berlin, 1980.  
%*
\bibitem{CS80-II}  R.H.  Cameron,   D.A.  Storvick,
{Analytic Feynman integral solutions of an integral equations related to the Schr\"odinger equation}, 
{J. D'Analyse Math.} \textbf{38}, 34--66.
%*
\bibitem{CS87} R.H.  Cameron,   D.A.  Storvick,
{Relationships between the Wiener integral and the analytic Feynman integral},
{Rend. Circ. Mat. Palermo (2) Suppl.} \textbf{17}  (1987), 117--133.
%* 
\bibitem{CA83} K.S. Chang,  and  J.M. Ahn,
 {Converse measurability theorem for Gaussian process},
{J. Korean. Math. Soc.}  \textbf{19} (1983), 87--95. 
%*
\bibitem{CCY02}    K.S.Chang,  D.H. Cho,     I.  Yoo,
{Evaluation formulas for a conditional Feynman integral over Wiener paths in abstract Wiener space},
{Czechoslovak Math. J.} \textbf{54}  (2004), 161--180, 
%* 
\bibitem{CKSY02} K.S. Chang,    B.S. Kim,   T.S. Song, and   I.  Yoo,
{Convolution and analytic Fourier--Feynman transforms over paths in abstract Wiener space},
{Integral transforms Spec. Funct.}   \textbf{13}   (2002), 345--362.
%*04
\bibitem{CKSY10}  K.S. Chang,  B.S. Kim,  T.S. Song, and    I. Yoo,
{Fourier--Feynman transforms, convolutions and first variations on the space of  abstract Wiener space valued continuous functions},
{Rocky Mountain  J. Math.}  \textbf{40}  (2010), 789--812.
%
\bibitem{CR88}   K.S. Chang,  K.S.  Ryu,  
{A generalized converse measurability theorem},
{Proc. Amer. Math. Soc.} \textbf{104} (1988), 835--839.
%*
\bibitem{CKSY02}   K.S. Chang,   B.S.  Kim,  T.S.  Song,   I.  Yoo,
{Convolution and analytic Fourier--Feynman transforms over paths in abstract Wiener space},
{Integral transforms Spec. Funct.}  \textbf{13}  (2002), 345--362.
%*
\bibitem{CKSY10} K.S. Chang,     B.S.  Kim,  T.S.  Song,    I.Yoo,
{Fourier--Feynman transforms, convolutions and first variations on the space of  abstract Wiener space valued continuous functions},
{Rocky Mountain  J. Math.} \textbf{40}  (2010), 789--812.
%*
\bibitem{CCK15-I}   S.J. Chang,   J.G.  Choi,   A.Y. Ko, 
{Multiple generalized analytic Fourier--Feynman transform via rotation of Gaussian paths on function space},
{Banach J. Math. Anal.} \textbf{9}  (2015),  58--80.
%*
\bibitem{CCK15-II}  S.J. Chang,  J.G. Choi,    A.Y. Ko, 
{A translation theorem for the generalised analytic Feynman integral associated with Gaussian paths},
{Bull. Aust. Math. Soc.} \textbf{93}  (2016), 152--161.
%*
\bibitem{CCK16}  S.J.  Chang, J.G.  Choi,      A.Y.  Ko,
{A translation theorem for the generalized  Fourier--Feynman  transform  associated with Gaussian process on function space},
{J. Korean Math. Soc.} \textbf{53}  (2016),  991--1017.
%*
\bibitem{CCS03} S.J.  Chang,   J.G. Choi,      D.  Skoug,
{Integration by parts formulas involving generalized Fourier--Feynman transforms on function space},
{Trans. Amer. Math. Soc.}  \textbf{355}  (2003), 2925--2948.
%*
\bibitem{CCS07} S.J.  Chang,   J.G. Choi,      D.  Skoug,
{Evaluation  formulas for conditional function space integrals I},
{Stoch.  Anal.  Appl.} \textbf{25}  (2007),  141--168.
%*
\bibitem{CCS10}  S.J.  Chang,   J.G. Choi,      D.  Skoug,
{Generalized Fourier--Feynman transforms, convolution products, and first variations on function space},
{Rocky Mountain J. Math.}   \textbf{40}  (2010),  761--788.
%*
\bibitem{CC96}    S.J. Chang,  D.M.Chung, 
{Conditional function space integrals with applications},
{Rocky Mountain J.  Math.} \textbf{26} (1996),   37--62.    
%*
\bibitem{CChungS09}  S.J. Chang,    H.S.Chung,      D. Skoug,
{Integral transforms of functionals in $L^2(C_{a,b}[0,T])$}, 
{J. Fourier Anal. Appl.}  \textbf{15}  (2009), 441--462.
%*
\bibitem{CLC15}   S.J.Chang,    W.G. Lee,    J.G.Choi,
{$L_2$-sequential transforms on function space}, 
{J. Math. Anal. Appl.}  \textbf{421} (2015), 625--642.
%*
\bibitem{CS03}  S.J. Chang,     D. Skoug,
{Generalized Fourier--Feynman transforms and a first variation on function space},
{Integral Transforms  Spec. Funct.}  \textbf{14} (2003), 375--393.
%*
\bibitem{Cho08}     D.H. Cho,
{Conditional analytic Feynman integral over product space of Wiener paths in abstract Wiener space},
{Rocky Mountain  J. Math.} \textbf{38}  (2008), 61--90.
%*
\bibitem{CC12}    J.G. Choi,   S.J. Chang, 
{Generalized Fourier--Feynman transform and sequential transforms on function space},
{J. Korean Math. Soc.}  \textbf{49}  (2012), 1065--1082. 
%*
\bibitem{CCC13}    J.G.Choi,   H.S. Chung,     S.J. Chang,
{Sequential generalized transforms on function space},
{Abst. Appl. Anal.}  \textbf{2013} (2013),  Article ID: 565832.
%*
\bibitem{CPS93}  D.M. Chung,    C.  Park,    D. Skoug,
{Generalized  Feynman integrals via conditional Feynman integrals},
{Michigan Math. J.} \textbf{40}   (1993),  377--391.
%*
\bibitem{halmos}   R.P. Halmos, 
{Measure Theory},
Graduate Texts in Mathematics,  vol. 18, Springer, Berlin, 1974.
%*
\bibitem{HPS95} T.  Huffman,   C. Park,     D. Skoug,
{Analytic Fourier--Feynman transforms and convolution}, 
{Trans. Amer. Math. Soc.}  \textbf{347}, 661--673  (1995)
%*
\bibitem{HPS96}  T.  Huffman,   C. Park,     D. Skoug,
{Convolutions and Fourier--Feynman transforms of functionals involving multiple integrals}, 
{Michigan Math. J.}  \textbf{43}  (1996), 247--261.
%*
\bibitem{HPS97}  T.  Huffman,   C. Park,     D. Skoug,
{Convolution and Fourier--Feynman transforms},
{Rocky Mountain J. Math.}  \textbf{27}  (1997), 827--841.
%*
\bibitem{JS73}   G.W. Johnson,  D.L. Skoug,  
{A Banach algebra of Feynman integrable functionals with application 
to an integral equation formally equivalent to Schroedinger¡¯s equation},
{J. Math. Anal. Appl.} \textbf{12}  (1973), 129--152.
%*
\bibitem{JS79}  G.W. Johnson,  D.L. Skoug, 
{An $L_p$ analytic Fourier--Feynman transform}, 
{Michigan Math. J.} \textbf{26}  (1979), 103--127.
%*
\bibitem{JS79}  G.W. Johnson, and   D.L.   Skoug,
{Scale-invariant measurability in Wiener space}, 
{Pacific J. Math.}  \textbf{83}  (1979), 157--176. 
%* 
\bibitem{JS81}  G.W. Johnson, and   D.L.   Skoug,
{Notes on the Feynman integral, II}, 
{J. Funct. Anal.}  \textbf{41}  (1981), 277--289.
%*
\bibitem{JS83} G.W. Johnson,  D.L. Skoug, 
{Notes on the Feynman integral, III: The Schroedinger equation}, 
{Pacific J. Math.} \textbf{105}  (1983),  321--358.
%*
\bibitem{KK06}    B.S. Kim,  T.S. Kim, 
{Change of scale formulas for Wiener integral over paths in abstract Wiener space},
{Commun. Korean Math. Soc.}  \textbf{21}   (2006),  75--88. 
%*
\bibitem{Kim05}   Y.S. Kim
{Analytic Feynman integrals, Fourier-Feynman transforms and  change of scale formula for Wiener integrals 
over paths on abstract Wiener spaces},
{Integral Transforms  Spec. Funct.}  \textbf{16}   (2005),  323--335.
%*
\bibitem{KK06}   B.S. Kim  and  T.S.    Kim,
{Change of scale formulas for Wiener integral over paths in abstract Wiener space},
{Commun. Korean Math. Soc.}   \textbf{21}   (2006),  75--88. 
%*
\bibitem{KLe73}   J. Kuelbs, R. LePage, 
{The law of the iterated logarithm for Brownian motion in a Banach space}, 
{Trans. Amer. Math. Soc.} \textbf{185}  (1973), 253--264.
%*
\bibitem{PWZ33}  R.E.A.C.  Paley,  N.  Wiener, and    A.   Zygmund,
{Notes on random functions}, 
{Math. Z.}  \textbf{37}  (1933), 647--668.
%*
\bibitem{PS88} C. Park and D. Skoug, 
{A note on Paley--Wiener--Zygmund stochastic integrals}, 
{Proc. Amer. Math. Soc.}  \textbf{103}  (1988),  591--601.
%*
\bibitem{Royden88}    H.L.  Royden,
{Real Analysis  (Third edition)}    
Macmillan,  New York, 1988. 
%*
\bibitem{Ryu92}  K.S.  Ryu, 
{The Wiener integral over paths in abstract Wiener space},
{J. Korean Math. Soc.} \textbf{29}  (1992), 317--331.
%*
\bibitem{Yeh71}    J. Yeh,
{Singularity of Gaussian measures on function spaces induced by Brownian motion processes with non-stationary increments},
{Illinois J. Math.}  \textbf{15}  (1971), 37--46.
%*
\bibitem{Yeh73}   J. Yeh,
{Stochastic Processes  and the Wiener Integral},
Marcel Dekker, Inc., New York, 1973.
 
  
\end{thebibliography}
\end{document}